\begin{document}

\title{Le cristal de Dieudonné des schémas en $\BF$-vectoriels}
\author{Arnaud Vanhaecke}
\begin{abstract}
Dans cet article on décrit le cristal de Dieudonné d'un schéma en groupes fini localement libre, muni d'une action vectorielle d'un corps fini $\BF$. Ces schémas en $\BF$-vectoriels apparaissent lorsqu'on considère les points de torsion d'un module $p$-divisible. Une classe particulière de schémas en $\BF$-vectoriels a été classifiée par Raynaud dans \cite{ray}, ce qui nous permet de déterminer la structure des points de torsion d'un module $p$-divisible, sous certaines conditions sur son algèbre de Lie.
\end{abstract}

\date{\today}
\maketitle

\section{Introduction}
Soit $p$ un nombre premier et $\BF$ un corps fini à $q=p^r$ éléments. On note $\Sigma = \Spec \W(\BF)$, où $W$ désigne le foncteur des vecteurs de Witt. Soit $S$ un schéma sur $\Sigma$ tel que $p$ soit localement nilpotent sur $S$. Dans cette introduction on supposera que $S$ est affine, de la forme $\Spec R$ pour $R$ une $\W(\BF)$-algèbre. Dans \cite{bbm}, les auteurs ont introduit, par la cohomologie cristalline, une théorie de Dieudonné pour les schémas en groupes finis localement libres et les groupes $p$-divisibles sur $S$. À $G$ un schéma en groupes fini localement libre (resp. $X$ un groupe $p$-divisible) sur $S$ on associe de manière fonctorielle un cristal $\BD(G)$ (resp. $\BD(X)$) sur le site cristallin $\CRIS(S/\Sigma)$. Le foncteur $\BD$ et ses propriétés de pleine fidélité ont beaucoup été étudiés (cf. \cite{deme}) mais nous ne ferons pas usage de ces résultats. Nous utiliserons seulement que dans le cas où $S$ est le spectre d'un corps parfait, le cristal de Dieudonné est équivalent au module de Dieudonné et que alors $\BD$ est une équivalence de catégories.

Un \emph{schéma en $\BF$-vectoriels} $G$ sur $S$ est un schéma en groupes fini localement libre tel que pour tout schéma $X$ sur $S$, $G(X)$ soit un espace vectoriel sur $\BF$. C'est-à-dire que $G$ est muni d'une action du groupe multiplicatif $\BF^{\times}$, satisfaisant des conditions supplémentaires. Comme $S$ est affine, $G=\Spec A$, où $A$ est une algèbre de Hopf sur $R$ munie d'une action de $\BF^{\times}$. Cette action satisfait une propriété supplémentaire : l'addition dans $\BF^{\times}\subset \BF$ est compatible avec la convolution dans $A$. Cette propriété correspond à l'axiome que pour tout $\lambda$, $\lambda' \in \BF$ et $v\in G(X)$, pour $X$ un schéma sur $S$, $(\lambda+\lambda')v =\lambda v + \lambda'v$\footnote{Noter que cet axiome est légèrement subtil car les deux sommes ne sont pas de même nature.}. Cette action détermine une graduation indexée par $\BF^{\vee}$, le groupe des caractères de $\BF^{\times}$ à valeurs dans $\bar \BQ_p$,
\begin{equation}\label{eqn:gradint}
A=\bigoplus_{\chi \in \BF^{\vee}} A_{\chi}.
\end{equation}
Il n'est pas vrai que toute graduation de type $\BF^{\vee}$ sur $A$ donne lieu à une structure de schéma en $\BF$-vectoriels sur $G$. En effet toute action de $\BF^{\times}$ sur $A$ ne s'étend pas nécessairement en une action vectorielle de $\BF$. Notons que, comme $A$ est localement libre sur $R$, il en est de même pour les $A_{\chi}$. Dans \cite{ray}, ces schémas ont été étudiés et ils sont classifiés dans le cas où pour tout $\chi \in \BF^{\vee}$ non-trivial, $A_{\chi}$ est localement libre de rang $1$ sur $R$ et $A_1 = R\oplus A_1'$, pour $A_1$ la composante isotypique du caractère trivial, tel que $A_1'$ est localement libre de rang $1$. On est donc naturellement intéressé par le rang des $A_{\chi}$. 

Pour simplifier, on suppose $S=\Spec R$ connexe. Alors le rang des $A_{\chi}$ est constant sur $R$ et on peut définir le \emph{caractère de $G$} comme
$$
\Cha_S(G) \coloneqq \sum_{\chi \in \BF^{\vee}}\rang_R(A_{\chi})[\chi]\in \BN[\BF^{\vee}].
$$
Dans cet article on calcule ce caractère à partir du cristal de Dieudonné de $G$. Le cristal $\BD(G)$ est muni d'une action de $\BF^{\times}$ et donc admet une graduation du même type que (\ref{eqn:gradint}). La relation de convolution implique que cette graduation est de la forme
$$
\BD(G) = \bigoplus_{\chi \in \BF^{+}}\BD(G)_{\chi},
$$
où $\BF^+\subset \BF^{\vee}$ est l'ensemble des caractères $\chi\in \BF^{\vee}$, tels que si on pose $\chi(0)=0$, $\chi$ est additif. Or, comme $G$ est annulé par $p$, $\BD(G)$ est localement libre modulo $p$. On peut donc parler du rang des $\BD(G)_{\chi}$.  On définit le \emph{caractère infinitésimal} de $G$ par 
$$
\cha_S(\BD(G)) \coloneqq \sum_{\chi \in \BF^+} \rang_{R/pR}(\BD(G)_{\chi})[\chi]\in \BN[\BF^+],
$$
où $\rang_{R/pR}(\BD(G)_{\chi})$ est le rang modulo $p$ de la composante $\chi$-isotypique de $\BD(G)$. Le théorème central de cet article est que le caractère et le caractère infinitésimal de $G$ sont reliés par une relation "exponentielle" : 
\begin{theorem}\label{thm:carint}
Soit $G$ un schéma en $\BF$-vectoriels sur $S$ connexe. Soit $\cha_S(\BD(G)) = \sum_{\chi\in \BF^+}n_{\chi}[\chi]$ le caractère infinitésimal de $G$ et $\Cha_S(G)$ son caractère. Alors on a 
\begin{equation}\label{eq:carforint}
\Cha_S(G) = \prod_{\chi \in \BF^+}\left ( 1+[\chi]+\dots + [\chi^{p-1}]\right )^{n_{\chi}}.
\end{equation}
\end{theorem}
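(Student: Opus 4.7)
La strat\'egie consiste \`a combiner une r\'eduction au cas d'un corps alg\'ebriquement clos de caract\'eristique $p$, une propri\'et\'e de multiplicativit\'e des deux membres de (\ref{eq:carforint}) sous extensions, et un calcul explicite sur les objets simples de la cat\'egorie des sch\'emas en $\BF$-vectoriels. Puisque $G$ est annul\'e par $p$, les quantit\'es en jeu ne d\'ependent que de $S \otimes_{\W(\BF)} \BF$ ; on peut donc supposer $pR = 0$. De plus, les rangs des composantes $A_\chi$ et $\BD(G)_\chi$ \'etant stables par changement de base fid\`element plat, on se ram\`ene au cas $S = \Spec k$ avec $k$ alg\'ebriquement clos, situation dans laquelle $\BD$ est une \'equivalence de cat\'egories.

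Pour toute suite exacte courte $0 \to G' \to G \to G'' \to 0$ de sch\'emas en $\BF$-vectoriels, on a localement $A \cong A' \otimes A''$ comme $\BF^\times$-repr\'esentations (car $G$ est un $G'$-torseur sur le sch\'ema affine $G''$), d'o\`u $\Cha_S(G) = \Cha_S(G') \cdot \Cha_S(G'')$ ; par ailleurs, l'exactitude du foncteur $\BD$ entra\^ine $\cha_S(\BD(G)) = \cha_S(\BD(G')) + \cha_S(\BD(G''))$. L'identit\'e (\ref{eq:carforint}) est donc multiplicative sous extensions, et il suffit de la v\'erifier sur les objets simples. Via $\BD$, ceux-ci correspondent aux modules de Dieudonn\'e de dimension $1$ sur $k$ concentr\'es en un poids $\chi \in \BF^+$, et se r\'ealisent concr\`etement comme twists de $\alpha_p$, $\mu_p$ ou $\BZ/p$ \`a la Raynaud \cite{ray}. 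Un calcul direct dans chaque cas montre que l'alg\`ebre de Hopf d'un tel $G_\chi$ s'\'ecrit $k[u]/(u^p)$ avec $u$ de poids $\chi$ (pour $\mu_p$, en utilisant le logarithme tronqu\'e $u = \log(1+x)$ ; pour $\BZ/p$, par dualit\'e de Cartier), de sorte que
$$\Cha_S(G_\chi) = 1 + [\chi] + \ldots + [\chi^{p-1}] \quad \text{et} \quad \cha_S(\BD(G_\chi)) = [\chi],$$
ce qui v\'erifie (\ref{eq:carforint}) sur chaque objet simple.

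L'obstacle principal r\'eside dans cette classification des objets simples et dans la description explicite de leur alg\`ebre de Hopf : c'est pr\'ecis\'ement ici qu'intervient de mani\`ere essentielle le th\'eor\`eme de Raynaud \cite{ray}, qui garantit que, pour chaque $\chi \in \BF^+$, l'alg\`ebre $k[u]/(u^p)$ porte bien une structure de sch\'ema en $\BF$-vectoriels du type voulu et que toute brique simple est de cette forme. Une fois ce point \'etabli, la conclusion est imm\'ediate : dans toute suite de composition de $G$ par sous-sch\'emas en $\BF$-vectoriels, la multiplicit\'e totale du poids $\chi$ parmi les sous-quotients est exactement $n_\chi$ (par additivit\'e du module de Dieudonn\'e), ce qui, combin\'e \`a la multiplicativit\'e des caract\`eres sous extensions, donne la formule (\ref{eq:carforint}).
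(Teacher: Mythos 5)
Votre sch\'ema de preuve (r\'eduction \`a un corps alg\'ebriquement clos, multiplicativit\'e des deux membres de la formule, v\'erification sur des briques \'el\'ementaires) est dans l'esprit du texte, mais l'\'etape centrale --- la classification des objets simples --- est fausse d\`es que $r>1$, et c'est une lacune r\'eelle. D'une part, $\mu_p$ et $\BZ/p$ n'admettent aucune structure de sch\'ema en $\BF$-vectoriels pour $r>1$ : leur anneau d'endomorphismes est $\BF_p$, qui ne re\c{c}oit pas de morphisme d'anneaux depuis $\BF=\BF_{p^r}$. D'autre part, les objets simples de type \'etale et multiplicatif existent bel et bien, mais ce sont le sch\'ema constant $\underline{\BF}$ (d'ordre $q$) et son dual de Cartier : leur module de Dieudonn\'e est de dimension $r$, avec une composante de dimension $1$ dans \emph{chaque} poids de $\BF^+$, car l'op\'erateur bijectif ($F$ ou $V$) permute cycliquement les poids et interdit toute concentration en un seul poids. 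Votre affirmation selon laquelle tout objet simple a un module de Dieudonn\'e de dimension $1$ concentr\'e en un poids, d'alg\`ebre de Hopf $k[u]/(u^p)$, n'est donc correcte que pour les simples de type $\alpha_p$ ; l'alg\`ebre de Hopf de $\underline{\BF}$ est $k^{\BF}$, de dimension $q$. Ces deux simples suppl\'ementaires v\'erifient la formule, puisque $\Cha_k(\underline{\BF}) = 1+\sum_{\chi\in\BF^\vee}[\chi] = \exp_{\BF}\bigl(\sum_{\chi\in\BF^+}[\chi]\bigr)$, mais ce calcul manque chez vous, et votre argument final de comptage (la multiplicit\'e totale du poids $\chi$ parmi les sous-quotients serait $n_\chi$) s'effondre puisque certains sous-quotients simples contribuent \`a tous les poids \`a la fois. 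L'appel au th\'eor\`eme de Raynaud est par ailleurs d\'eplac\'e : il classifie les sch\'emas d'ordre $q$ dont toutes les composantes $\sI_\chi$ sont inversibles, et non les objets simples de la cat\'egorie (les twists de $\alpha_p$ sont d'ordre $p\neq q$ pour $r>1$).

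Signalons aussi que la multiplicativit\'e de $\Cha$ est sous-justifi\'ee : le fait que $G$ soit un $G'$-torseur sur $G''$ ne donne pas $A\cong A'\otimes A''$ comme $\BF^{\times}$-repr\'esentations (un torseur sous un sch\'ema en groupes fini plat sur une base affine n'est pas trivial en g\'en\'eral). L'\'enonc\'e est vrai, mais sa d\'emonstration requiert l'argument de base homog\`ene par Nakayama gradu\'e lorsque $G''$ est connexe, puis la dualit\'e de Cartier pour le cas \'etale et la d\'ecomposition connexe--\'etale, comme dans le texte. Une fois ces deux points corrig\'es, votre d\'evissage par suite de composition fournirait une variante l\'egitime de la preuve du texte, laquelle proc\`ede plut\^ot par le calcul explicite dans le cas annul\'e par $V$, la filtration $V$-adique et un argument de dualit\'e.
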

On montre ce théorème en le réduisant au cas où $S$ est le spectre d'un corps parfait $k$ de caractéristique $p$. On utilise ensuite que sur le spectre d'un corps parfait le cristal de Dieudonné est équivalent au module de Dieudonné de $G$. Notons que dans ce cas, on obtient une équivalence entre schémas en $\BF$-vectoriels et modules de Dieudonné gradués sur $\BF^+$ tels que $F$ et $V$, respectivement le Frobenius et le Verschibung, définissent des endomorphismes gradués. On prouve ensuite le théorème par un calcul explicite lorsque $G$ est annulé par $V$, puis par un argument de dévissage on en déduit le théorème dans le cas où $V$ est nilpotent. On conclut finalement par un argument de dualité. Une conséquence de ce théorème est que l'on peut déduire du cristal de Dieudonné d'un schéma en $\BF$-vectoriels $G$ sur $S$ si $G$ est un schéma de Raynaud. Cette relation devrait avoir des applications dans des situations où $G$ n'est pas un schéma de Raynaud.

La formule des caractères (\ref{eq:carforint}) est apparue dans l'étude des $O_D$-modules formels spéciaux de Drinfeld (cf. \cite{drin}) en vue de la construction d'un modèle formel du premier revêtement de l'espace de Drinfeld (cf. \cite{van2}). On note $K$ une extension finie de $\BQ_p$ de degré $n=ef$ et $D$ une algèbre à division centrale sur $K$ d'invariant $1/d$, $d\geqslant 2$. On note $O_D$ l'ordre maximal de $D$ et $\Pi$ une uniformisante de $O_D$. Un groupe $p$-divisible muni d'une action de $O_D$ est appelé \emph{$O_D$-module $p$-divisible}. Maintenant, $\BF$ désigne le corps résiduel de $O_D$, c'est-à-dire que $r=fd$ avec les notations précédentes ; $S=\Spec R$ désigne toujours un schéma sur $\Sigma$ tel que $p$ est localement nilpotent sur $S$. Notons que si $X$ est un $O_D$-module $p$-divisible sur $S$ alors le schéma des $\Pi$-points de torsion $X[\Pi]$ est un schéma en $\BF$-vectoriels sur $S$. Le but de la dernière section est de calculer le caractère des points de torsion de certains modules $p$-divisibles, en particulier des $O_D$-module $p$-divisibles. 

Si $X$ est un $O_D$-module $p$-divisible sur $S$, son algèbre de Lie est munie d'une action de $\BF$. On note $\kappa$ le corps résiduel de $K$ et $\BF^+_{\kappa}\subset \BF^+$ l'ensemble des caractères $\kappa$-linéaires. L'algèbre de Lie est graduée sur $\BF^+$,
$$
\Lie(X) = \bigoplus_{\chi \in \BF^+}\Lie(X)_{\chi},
$$
et on dit que l'action de $O_D$ est \emph{stricte} sur $X$ si $\Lie(X)_{\chi}= 0$ pour tout $\chi\notin \BF_{\kappa}^+$. Si, de plus, pour tout $\chi \in \BF_{\kappa}^+$, $\Lie(X)_{\chi}$ est localement libre de rang $1$ sur $S$, alors $X$ est appelé \emph{$O_D$-module formel spécial}. Notons que dans ce cas, $X$ est de hauteur $nd^2$. L'impulsion de départ de ce travail était de trouver une preuve alternative au résultat suivant : 
\begin{proposition}
Si $X$ est un $O_D$-module formel spécial sur $S$, alors $X[\Pi]$ est un schéma de Raynaud.
\end{proposition}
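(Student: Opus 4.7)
La strat�gie consiste � appliquer le Th�or�me~\ref{thm:carint} au sch�ma en $\BF$-vectoriels $X[\Pi]$ : il suffit de calculer le caract�re infinit�simal $\cha_S(\BD(X[\Pi]))$ puis de d�velopper la formule~(\ref{eq:carforint}) pour retrouver le caract�re d'un sch�ma de Raynaud, � savoir $\rang_R A_1 = 2$ et $\rang_R A_\chi = 1$ pour tout $\chi \in \BF^{\vee}$ non trivial. La multiplication par $\Pi$ sur $X$ �tant une isog�nie de noyau $X[\Pi]$, la fonctorialit� du cristal de Dieudonn� appliqu�e � la suite exacte $0 \to X[\Pi] \to X \xrightarrow{\Pi} X \to 0$ donne $\BD(X[\Pi]) \simeq \BD(X)/\Pi \BD(X)$, compatiblement � l'action induite de $\BF = O_D/\Pi$.

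Le coeur de la preuve consiste � �tablir que $\BD(X)$ est localement libre de rang~$1$ comme module sur $O_D \otimes_{\BZ_p} W(R)$. Ceci devrait se d�duire de la conjonction de trois ingr�dients : l'hypoth�se stricte (qui contr�le la $O_K$-action), la structure prescrite de $\Lie(X)$ (somme directe de $d$ composantes isotypiques de rang~$1$ index�es par $\BF_\kappa^+$), et l'�galit� entre la hauteur $nd^2$ de $X$ et le $\BZ_p$-rang de $O_D$. En r�duisant modulo $\Pi$, $\BD(X[\Pi])$ est alors localement libre de rang~$1$ sur $\BF \otimes_{\BF_p} R/pR$. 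Quitte � passer � un rev�tement �tale, on peut supposer $\BF \subset R/pR$, auquel cas cet anneau se d�compose en $\prod_{\chi \in \BF^+} R/pR$, les $r = fd$ facteurs �tant index�s par les plongements $\BF \hookrightarrow R/pR$, qui correspondent pr�cis�ment aux caract�res de $\BF^+$. Chaque composante isotypique $\BD(X[\Pi])_\chi$ est donc localement libre de rang~$1$ sur $R/pR$, et l'on obtient $n_\chi = 1$ pour tout $\chi \in \BF^+$.

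Le Th�or�me~\ref{thm:carint} fournit alors $\Cha_S(X[\Pi]) = \prod_{\chi \in \BF^+}\bigl(1 + [\chi] + \dots + [\chi^{p-1}]\bigr)$. En identifiant $\BF^+$ � l'ensemble $\{\omega^{p^i}\}_{0 \leq i < r}$ des conjugu�s de Frobenius d'un caract�re $\omega$ d'ordre $q - 1$, l'expansion du produit vaut $\sum_{a=0}^{q-1}[\omega^a]$ : le caract�re trivial est atteint aux deux extr�mit�s $a=0$ et $a=q-1$, et chaque caract�re non trivial exactement une fois. On en conclut $\Cha_S(X[\Pi]) = 2[1] + \sum_{\chi \neq 1}[\chi]$, qui est bien le caract�re d'un sch�ma de Raynaud. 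L'obstacle principal sera la libert� de rang~$1$ de $\BD(X)$ sur $O_D \otimes_{\BZ_p} W(R)$ : c'est l� qu'intervient de fa�on essentielle l'hypoth�se formel sp�cial, et l'on peut esp�rer s'y ramener par d�formation au cas $S = \Spec k$ avec $k$ parfait, o� le cristal co�ncide avec le module de Dieudonn� classique et o� l'on dispose d'une description explicite (cf.~\cite{drin}).
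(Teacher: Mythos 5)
Votre strat\'egie globale --- calculer $\cha_S(\BD(X[\Pi]))$ puis d\'evelopper la formule des caract\`eres pour retrouver le caract\`ere $2[1]+\sum_{\chi\neq 1}[\chi]$ d'un sch\'ema de Raynaud --- est bien celle du papier, et votre d\'eveloppement final du produit $\prod_{\chi\in\BF^+}(1+[\chi]+\dots+[\chi^{p-1}])$ est correct. Le probl\`eme est l'\'etape que vous identifiez vous-m\^eme comme le coeur de la preuve : la libert\'e locale de rang $1$ de $\BD(X)$ sur $O_D\otimes_{\BZ_p}\W(R)$. Vous ne la d\'emontrez pas (elle \emph{devrait se d\'eduire} de la condition sp\'eciale, \'ecrivez-vous), et elle est en fait essentiellement \'equivalente \`a la conclusion vis\'ee : sur un corps parfait $k$, l'id\'eal $\Pi\cdot(O_D\otimes\W(k))$ est le radical de Jacobson de l'ordre $O_D\otimes\W(k)$, donc par Nakayama et comparaison des rangs sur $\W(k)$, $D(X)$ est libre de rang $1$ sur $O_D\otimes\W(k)$ si et seulement si $D(X)/\Pi D(X)$ est libre de rang $1$ sur $\BF\otimes_{\BF_p} k$, c'est-\`a-dire si et seulement si $\cha_k(D(X[\Pi]))=\sum_{\chi\in\BF^+}[\chi]$. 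Or l'hypoth\`ese d'\^etre sp\'ecial ne porte que sur $\Lie(X)$, c'est-\`a-dire sur le quotient de $\BD(X)_S$ par $\omega_{X^D}$, et ne donne a priori aucun contr\^ole sur la structure de $O_D\otimes\W$-module du cristal entier ; on sait seulement que les longueurs des quotients $\Pi^jD(X)/\Pi^{j+1}D(X)$ d\'ecroissent et somment \`a $nd^2$, d'o\`u $\dim_k D(X)/\Pi D(X)\geqslant fd$, et c'est pr\'ecis\'ement l'\'egalit\'e qu'il faut \'etablir. En l'\'etat, votre argument est donc circulaire.

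Le papier contourne ce point par un calcul qui n'utilise jamais la structure de $O_D\otimes\W$-module de $\BD(X)$. D'une part, la proposition \ref{prop:divcar} donne $\cha_S(\BD(X[\pi]))=hd\sum_{\chi\in\BF^+}[\chi]$ en n'utilisant que l'injectivit\'e de $V$ sur $D(X)$ ; d'autre part, le lemme final compare, par le lemme du serpent appliqu\'e au diagramme form\'e par $V$ et $\Pi$, la diff\'erence $\cha_S(\BD(X[\Pi]))^{(qp)}-\cha_S(\BD(X[\Pi]))^{(q)}$ \`a $\cha_S(\Lie(X))-\cha_S(\Lie(X))^{(q)}$. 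C'est l\`a, et seulement l\`a, qu'intervient l'hypoth\`ese sp\'eciale : $\cha_S(\Lie(X))=\sum_{\chi\in\BF_{\kappa}^+}[\chi]$ est invariant par torsion de Frobenius, donc $\cha_S(\BD(X[\Pi]))$ l'est aussi, donc il vaut $c\sum_{\chi\in\BF^+}[\chi]$ pour un entier $c$, et la relation $\sum_{i=1}^d\cha_S(\BD(X[\Pi]))^{(q^i)}=\cha_S(\BD(X[\pi]))$ avec $h=1$ force $c=1$. C'est ce type d'argument (ou un calcul explicite du module de Dieudonn\'e \`a la Drinfeld) qu'il faudrait substituer \`a votre affirmation de libert\'e sur $O_D\otimes_{\BZ_p}\W(R)$.
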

Ce résultat est démontré par exemple dans \cite{wan} en utilisant que tout $O_D$-module formel spécial admet un relèvement en caractéristique zéro, puis qu'un schéma en $\BF$-vectoriels étale de rang $q$ est un schéma de Raynaud. L'intérêt est d'avoir des equations explicites pour $X[\Pi]$. À l'aide de la formule des caractères (\ref{eq:carforint}), on prouve une condition nécessaire et suffisante pour que $X[\Pi]$ soit un schéma de Raynaud.
\begin{theorem}
Soit $X$ un $O_D$-module $p$-divisible de hauteur $hnd^2$, pour $h$ un entier, sur $S$. Alors $X[\Pi]$ est un schéma de Raynaud si et seulement si $h=1$ et pour tout $\chi \in \BF^+$ on a 
$$
\rang_R(\Lie(X)_{\chi}) = \rang_R(\Lie(X)_{\chi^q}).
$$
\end{theorem}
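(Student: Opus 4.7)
Soit $G := X[\Pi]$. La strat�gie est de traduire \og $G$ est un sch�ma de Raynaud \fg en une condition sur le caract�re de $G$, puis de l'exploiter via le Th�or�me~\ref{thm:carint} pour obtenir, en combinaison avec la filtration de Hodge de $\BD(X)$, la sym�trie annonc�e.

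Par la d�finition rappel�e en introduction, $G$ est un sch�ma de Raynaud si et seulement si
$$\Cha_S(G) = [1] + \sum_{\chi\in\BF^\vee}[\chi]$$
(rang $2$ pour la composante isotypique triviale, rang $1$ pour chaque composante non-triviale). J'appliquerai alors le Th�or�me~\ref{thm:carint} pour �crire
$$\Cha_S(G) = \prod_{\psi\in\BF^+}(1+[\psi]+\cdots+[\psi^{p-1}])^{n_\psi}, \qquad n_\psi = \rang_{R/pR}\BD(G)_\psi,$$
et j'utiliserai l'unicit� de l'�criture en base~$p$ des caract�res de $\BF^\times$ (� l'identification pr�s $(0,\dots,0)\sim(p-1,\dots,p-1)$ pour le caract�re trivial) pour �tablir, par un calcul combinatoire direct, l'�quivalence entre cette �galit� et la condition $n_\psi = 1$ pour tout $\psi\in\BF^+$ : sous cette hypoth�se chaque caract�re non-trivial appara�t une fois (par unicit� de sa repr�sentation) et le trivial deux fois, alors qu'une d�viation quelconque d�truit ce sch�ma de multiplicit�s.

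L'�galit� $h=1$ s'en d�duit par un comptage imm�diat : la filtration de $X[\pi_K]$ par les puissances de $\Pi$ donne $|X[\Pi]|=q^h=p^{hr}$, d'o� $\sum_\psi n_\psi = hr$ ; comme $|\BF^+|=r$, imposer $n_\psi=1$ pour tout $\psi$ entra�ne $h=1$. Il reste alors le pas-cl� : exprimer les $n_\psi$ en termes de $\Lie(X)$. Je partirai de l'identification $\BD(G) \cong \BD(X)/\Pi\BD(X)$ d�duite de la suite exacte $0 \to X[\Pi] \to X \xrightarrow{\Pi} X \to 0$, puis j'exploiterai la filtration de Hodge $O_D$-�quivariante $0\to\omega\to\BD(X)_S\to\Lie(X)\to 0$. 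Sa r�duction modulo $\Pi$ donne une suite exacte gradu�e par $\BF^+$, fournissant $n_\psi = \rang_R\Lie(X)_\psi + \rang_R(\omega/\Pi)_\psi$. Par dualit� de Cartier entre $X$ et $X^\vee$ et en tenant compte de la semi-lin�arit� de Frobenius sur le cristal, qui tord la graduation de $\omega$ par rapport � celle de $\Lie(X)$, on identifiera $\rang_R(\omega/\Pi)_\psi$ avec $\rang_R\Lie(X)_{\psi^q}$, o� $\psi\mapsto\psi^q$ est le d�calage de Frobenius induit sur $\BF^+$. Combin�e � la contrainte $\sum_\psi n_\psi = r$, la condition \og$n_\psi=1$ pour tout $\psi$\fg deviendra exactement la sym�trie $\rang_R\Lie(X)_\chi = \rang_R\Lie(X)_{\chi^q}$ pour tout $\chi\in\BF^+$.

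L'obstacle principal sera la derni�re identification : suivre pr�cis�ment comment la graduation par $\BF^+$ se propage � travers la filtration de Hodge et la dualit� de Cartier, afin de d�gager la formule explicite reliant $n_\psi$ aux composantes de $\Lie(X)$ et d'y voir appara�tre le bon d�calage de Frobenius $\chi\mapsto\chi^q$.
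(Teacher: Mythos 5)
La premi\`ere moiti\'e de votre plan est correcte et correspond \`a ce que fait le texte : $G=X[\Pi]$ est de Raynaud si et seulement si $\Cha_S(G)=[\chi_0]+\sum_{\chi\in\BF^{\vee}}[\chi]$, ce qui, par la formule des caract\`eres et l'unicit\'e du d\'eveloppement $p$-adique des caract\`eres non triviaux, \'equivaut \`a $n_\psi=1$ pour tout $\psi\in\BF^{+}$ (c'est la proposition \ref{prop:condray}) ; le comptage des degr\'es donne alors $h=1$.

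Le pas-cl\'e, en revanche, ne fonctionne pas tel que vous l'esquissez. La formule vis\'ee $n_\psi=\rang_R\Lie(X)_\psi+\rang_R(\omega/\Pi)_\psi$, jointe \`a l'identification $\rang_R(\omega/\Pi)_\psi=\rang_R\Lie(X)_{\psi^q}$, est fausse : pour $X$ \'etale avec $h=1$ on a $\Lie(X)=0$ alors que $n_\psi=1$ pour tout $\psi$ et que $X[\Pi]$ est bien de Raynaud ; plus g\'en\'eralement $\sum_\psi n_\psi=hr$ tandis que la somme sur $\psi$ de votre membre de droite vaut $2\rang_R(\Lie(X))$, quantit\'e qui n'est pas d\'etermin\'ee par la hauteur. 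Deux probl\`emes distincts s'accumulent : (i) la r\'eduction modulo $\Pi$ de la filtration de Hodge n'est pas exacte \`a gauche, car $\Pi$ est nilpotent sur $\omega_{X^D}$, $\BD(X)_S$ et $\Lie(X)$ d\`es que $p$ l'est sur $S$, et les termes de noyau du lemme du serpent ne sont pas n\'egligeables ; (ii) $\omega_{X^D}$ est de rang la hauteur de $X$ moins celui de $\Lie(X)$, et le d\'ecalage $\chi\mapsto\chi^q$ ne provient ni de la dualit\'e de Cartier ni de la semi-lin\'earit\'e du Frobenius, mais de la relation $\Pi a=\sigma(a)\Pi$ dans $O_D$ pour $a\in O_{\widetilde K}$, qui fait que $\Pi$ permute les composantes isotypiques par ce d\'ecalage. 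Le texte contourne ces deux obstacles en ne calculant pas $n_\psi$ mais seulement une diff\'erence : sur un point g\'eom\'etrique, $\Pi$ est injectif sur $M=D(X)$, et en comparant $0\to M^{(p)}\xrightarrow{V}M\to\Lie(X)\to 0$ \`a son image par $\Pi$, le lemme du serpent fournit
$$
\cha_S(\BD(X[\Pi]))^{(qp)}-\cha_S(\BD(X[\Pi]))^{(q)}=\cha_S(\Lie(X))-\cha_S(\Lie(X))^{(q)},
$$
le terme $\ker(\Pi|_{\Lie(X)})$ se simplifiant entre les deux suites exactes issues du serpent. Comme $\BF^{+}$ est une seule orbite sous $\chi\mapsto\chi^p$, la condition \og $n_\psi=1$ pour tout $\psi$ \fg{} \'equivaut \`a l'invariance de $\cha_S(\BD(X[\Pi]))$ par ce d\'ecalage jointe \`a $h=1$, d'o\`u exactement la condition $\cha_S(\Lie(X))=\cha_S(\Lie(X))^{(q)}$. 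C'est ce calcul de diff\'erences qu'il faut substituer \`a votre calcul absolu de $n_\psi$.
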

Ces conditions sont vérifiées pour les $O_D$-modules formels spéciaux, ou plus généralement pour les $O_D$-modules formels $r$-spéciaux considérés dans \cite{raza2}.

J'aimerais remercier chaleureusement L. Fargues de m'avoir donné les clés du théorème \ref{thm:carint}  et de ses encouragements constants pour sa démonstration. Je remercie aussi S. Bartling et V. Hernandez pour leurs relectures attentives et leur intérêt pour ce travail.

\tableofcontents

\section{Schémas en $\BF$-vectoriels}
On fixe un nombre premier $p$ et un corps fini $\BF$ à $q=p^r$ éléments. Dans cette section on rappelle la définition de schéma en $\BF$-vectoriels introduite dans \cite{ray} et ses premières propriétés. On définira aussi le cristal associé à un schéma en groupes fini et on introduira les caractères du groupe et du cristal. 
\subsection{Généralités}
\subsubsection{}
Commençons par définir l'anneau de base $D$. Soit $\bar \BQ$ une cloture algébrique de $\BQ$, soit $C\subset \bar{\BQ}$  le sous-corps engendré sur $\BQ$ par $\mu_{q-1}(\bar \BQ)$, les racines $(q-1)$-ièmes de l'unité de $\bar \BQ$. Soit $D'$ l'anneau des entiers de $C$. Alors si $\xi$ est une racine $(q-1)$-ième primitive de l'unité on a $\BZ[\xi]=D'\subset C = \BQ(\xi)$. Soit $\fkp$ un idéal premier fixé de $D'$ au-dessus de $p$. Soit $D$ l'anneau obtenu à partir de $D'$ en inversant $(q-1)$ et les idéaux premiers au-dessus de $p$ différents de $\fkp$. Dans la suite de cette section $S$ sera un schéma sur $D$. Sauf mention explicite du contraire, les schémas en groupes sur $S$ considérés seront tous abéliens.

\subsubsection{}
\begin{definition}
Un foncteur $(\Sch/S)^{\circ}\rightarrow (\Vect/\BF)$ de la catégorie des schémas sur $S$ vers la catégorie des $\BF$-espaces vectoriels sera appelé \emph{schéma en $\BF$-vectoriels} s'il est représentable par un schéma fini, localement libre et de présentation finie. C'est donc un schéma en groupes abéliens muni d'une action de $\BF$.
\end{definition}
Soit $G$ un schéma en $\BF$-vectoriels. Comme $G$ est fini sur $S$, il est relativement affine sur $S$ i.e. il existe un $\sO_S$-module $\sA$ tel que $G=\Spec_S\sA$. De plus, $\sA$ est une $\sO_S$-algèbre de Hopf qui est munie d'une co-action de $\BF$. Notons :
\begin{itemize}
\item[$\bullet$] $\mu \colon \sA \otimes_S \sA \rightarrow \sA $ la multiplication,
\item[$\bullet$] $ \Delta \colon \sA \rightarrow \sA \otimes_S \sA$ la comultiplication,
\item[$\bullet$] $\sI$ l'idéal d'augmentation de $\sA$, qui est le noyau de la co-unité,
\item[$\bullet$] $[\lambda] \colon \sA \rightarrow \sA$ pour tout $\lambda \in \BF$ la co-action de $\BF$. Pour tout $\lambda, \lambda' \in \BF$ on a les relations 
\begin{equation}\label{rel:conv}
[\lambda][\lambda']=[\lambda\lambda']\text{ et }[\lambda +\lambda']=\mu\circ([\lambda]\otimes[\lambda'])\circ \Delta.
\end{equation}
\end{itemize}
Pour $n\geqslant 2$ un entier, la co-associativité (resp. l'associativité) permet de définir la $n$-ième itération de $\Delta$ (resp. $\mu$) comme étant une application $\Delta_n : \sA \otimes_S \dots \otimes_S \sA \rightarrow \sA$ (resp. $\mu_n : \sA \rightarrow \sA \otimes_S \dots \otimes_S \sA$) où $\sA$ apparaît $n$ fois.

La proposition suivante est immédiate : 
\begin{proposition}
Tout schéma en $\BF$-vectoriels est annulé par $p$ et est localement de rang une puissance de $p$.
\end{proposition}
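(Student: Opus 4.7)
La premi�re assertion d�coule d'une application it�r�e de la relation de convolution (\ref{rel:conv}). Dans $\BF$ de caract�ristique $p$, on a $0 = 1+1+\cdots +1$ ($p$ termes), d'o�
$$
[0] = \mu_p \circ ([1]\otimes \cdots \otimes [1])\circ \Delta_p.
$$
Comme $[1]=\id_{\sA}$, et que la multiplication par un entier $n$ dans le sch�ma en groupes $G$ correspond, du c�t� de l'alg�bre de Hopf $\sA$, � la $n$-i�me it�r�e de $\id_{\sA}$ pour le produit de convolution, le membre de droite est pr�cis�ment l'endomorphisme de $\sA$ induit par la multiplication par $p$ dans $G$. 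D'autre part, la co-action $[0]$ de $0\in \BF$ envoie tout $X$-point de $G$ sur l'�l�ment neutre : elle co�ncide donc avec le morphisme nul (compos� de la co-unit� et de l'unit� de $\sA$). On en d�duit que la multiplication par $p$ dans $G$ est nulle.

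Pour la seconde assertion, on invoque le r�sultat classique selon lequel tout sch�ma en groupes commutatif, fini et localement libre sur $S$ et annul� par $p$, est localement de rang une puissance de $p$. Le rang �tant une fonction localement constante sur $S$, on peut supposer $S=\Spec k$ avec $k$ alg�briquement clos. Si $\charac(k)\ne p$, alors $G$ est �tale et $G(k)$ est un $\BF_p$-espace vectoriel fini, donc de cardinal une puissance de $p$. Si $\charac(k)=p$, on utilise la suite connexe-�tale $0\to G^\circ \to G\to G^{\mathrm{et}}\to 0$ : la partie �tale est trait�e comme pr�c�demment, et pour la partie connexe $G^\circ = \Spec A$, le th�or�me de structure des sch�mas en groupes infinit�simaux commutatifs sur un corps parfait donne un isomorphisme d'alg�bres
$$
A \simeq k[x_1,\dots, x_n]/(x_1^{p^{a_1}},\dots,x_n^{p^{a_n}}),
$$
de sorte que $\rang_k A = p^{a_1+\cdots +a_n}$.

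L'obstacle principal pour la seconde assertion r�side dans l'analyse de la partie connexe sur un corps parfait, qui rel�ve de r�sultats classiques mais non triviaux (cf. Demazure-Gabriel, \emph{Groupes alg�briques}). La premi�re assertion, en revanche, est une cons�quence purement formelle de la compatibilit� entre l'action de $\BF$ et la convolution dans $\sA$, et on pourrait d'ailleurs imaginer que la structure $\BF$-vectorielle elle-m�me rende la seconde assertion plus directe via la graduation (\ref{eqn:gradint}) ; mais celle-ci ne sait voir que la partie de l'ordre divisible par $q-1$, et non que chaque composante isotypique est de rang une puissance de $p$.
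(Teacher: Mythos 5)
The paper offers no argument for this proposition (it is declared immediate), and your proof supplies exactly the intended one: the $p$-torsion is a formal consequence of the ring homomorphism $\BF\to\End(\sA)$, which must send $0=1+\dots+1$ ($p$ times) to the convolution-$p$-fold sum of $[1]=\id$, i.e.\ identify multiplication by $p$ on $G$ with the zero endomorphism, and the rank statement is the classical fact about finite locally free commutative group schemes killed by $p$, proved as you do by reduction to an algebraically closed field and the connected--\'etale sequence. The only step stated a bit quickly is the claim that $G$ is \'etale whenever $\mathrm{char}(k)\neq p$: since $D$ has residue characteristics $\ell\notin\{0,p\}$ as well, this is not Cartier's theorem alone but follows because the connected component is then infinitesimal of $\ell$-power order while being killed by $p$, hence trivial; this does not affect the correctness of your argument.
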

\qed

Rappelons que par \cite[Théorème 2.4.2 exp. $VII_B$]{sga3}, si $S$ est le spectre d'un corps, la catégorie des schémas en groupes finis sur $S$ est une catégorie abélienne, il en est donc de même pour la catégorie des schémas en $\BF$-vectoriels sur $S$. De plus, pour un schéma $S$ sur $D$ quelconque la catégorie des schémas en groupes finis localement libres n'est pas abélienne mais elle s'identifie à une sous-catégorie exacte (au sens de Quillen) de la catégorie des faisceaux fppf sur $S$ (cf. \cite[Exposé V]{sga3}. Ainsi, dans ce contexte, on peut parler de suite exacte et de foncteur exact. Ces considérations s'appliquent donc aussi à la catégorie des schémas en $\BF$-vectoriels sur $S$.
\subsubsection{}
Supposons dans ce paragraphe que $S = \Spec R$ où $R$ est une $D$-algèbre. On sait que la catégorie des schémas en groupes finis localement libres et de présentation finie sur $S$ est équivalente à la catégorie des $R$-algèbres de Hopf finies localement libres et de présentation finie, commutatives et co-commutatives. Les relations \ref{rel:conv} nous indiquent alors que se donner un schéma en $\BF$-vectoriels $G$ sur $S$ est équivalent à se donner une $R$-algèbre de Hopf $A$ et un morphisme d'anneaux commutatifs $\iota \colon \BF \rightarrow \End(A)$. Rappelons que $\End(A)$ est l'anneau des endomorphismes de l'algèbre de Hopf $A$ où l'addition est donnée par la convolution
$$
\forall f,g \in \End(A),\ f+g = \mu\circ(f\otimes g)\circ \Delta,
$$
 et le produit est le produit usuel. On obtient ainsi une équivalence de catégories :
\begin{proposition}\label{prop:eqaff}
Supposons que $S=\Spec R$ pour $R$ une $D$-algèbre. Le foncteur qui à $G$, un schéma en $\BF$-vectoriels, associe la paire $(A,\iota)$ induit une équivalence de catégories
$$
\{G \text{ schéma en $\BF$-vectoriels sur }S\}\xleftrightarrow {} \left \{\begin{array}c
											(A,\iota),\ A \text{ une R-Algèbre de Hopf}\\
											\text{finie localement libre, commutative et}\\
											\text{co-commutative, de présentation finie, }\\
											\iota \colon \BF \rightarrow \End(A), \text{ morphisme d'anneaux.}  \\
											\end{array}\right \}.
$$
\end{proposition}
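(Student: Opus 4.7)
La strat\'egie est d'utiliser l'\'equivalence classique (cf. \cite[Expos\'e V]{sga3}) entre la cat\'egorie des sch\'emas en groupes ab\'eliens finis localement libres et de pr\'esentation finie sur $\Spec R$ et celle des $R$-alg\`ebres de Hopf commutatives et co-commutatives finies localement libres et de pr\'esentation finie, donn\'ee par $G\mapsto A=\Gamma(G,\sO_G)$. Il s'agit alors de montrer qu'une structure suppl\'ementaire de sch\'ema en $\BF$-vectoriels sur $G$ correspond exactement \`a un morphisme d'anneaux $\iota : \BF \to \End(A)$, o\`u $\End(A)$ d\'esigne l'anneau des endomorphismes de l'alg\`ebre de Hopf $A$, muni de la composition comme multiplication et de la convolution comme addition.

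Dans le sens direct, par Yoneda la multiplication scalaire par $\lambda \in \BF$ dans $G$ correspond \`a une co-action $[\lambda] : A\to A$ qui est a priori seulement un morphisme de $R$-alg\`ebres. Pour $\lambda \in \BF^{\times}$, la multiplication scalaire est un automorphisme $\BF$-lin\'eaire, donc un morphisme de sch\'emas en groupes, ce qui fait de $[\lambda]$ un endomorphisme d'alg\`ebre de Hopf. Pour $\lambda=0$, l'endomorphisme de $G$ est le morphisme nul qui se factorise par la section neutre, et dualement $[0] = \eta\circ\varepsilon$, qui est le z\'ero de la convolution dans $\End(A)$. Les axiomes $1\cdot v = v$ et $(\lambda\lambda')v = \lambda(\lambda' v)$, qui se refl\`etent dans la premi\`ere relation de (\ref{rel:conv}), donnent alors $\iota(1)=\id$ et la multiplicativit\'e de $\iota$. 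R\'eciproquement, \'etant donn\'e $(A,\iota)$, chaque $\iota(\lambda)$ \'etant un endomorphisme d'alg\`ebre de Hopf induit un endomorphisme de sch\'emas en groupes de $G=\Spec_R A$, et l'on v\'erifie les axiomes d'espace vectoriel sur les points fonctoriels $G(T)$. La fonctorialit\'e sur les morphismes dans les deux sens est imm\'ediate.

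Le point le plus d\'elicat --- qui correspond \`a la remarque en note de bas de page de l'introduction --- est d'identifier l'axiome de distributivit\'e $(\lambda+\lambda')v = \lambda v +_G \lambda' v$, o\`u la somme de gauche vit dans $\BF$ et celle de droite dans $G(T)$ muni de sa loi de groupe, \`a la seconde relation de (\ref{rel:conv}), c'est-\`a-dire $[\lambda+\lambda'] = \mu\circ([\lambda]\otimes[\lambda'])\circ \Delta$. Ceci d\'ecoule du fait g\'en\'eral que la loi $+_G$ sur $G(T)$ se traduit par la convolution dans $\End(A)$ : si $f, g : A\to A$ correspondent \`a des endomorphismes $\phi, \psi$ de $G$, alors l'application $v \mapsto \phi(v) +_G \psi(v)$ dans $G(T)$ est repr\'esent\'ee dualement par $\mu\circ(f\otimes g)\circ \Delta$. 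L'axiome de distributivit\'e \'equivaut donc \`a $\iota(\lambda+\lambda') = \iota(\lambda)+\iota(\lambda')$ pour l'addition par convolution, ce qui ach\`eve la v\'erification que $\iota$ est un morphisme d'anneaux.
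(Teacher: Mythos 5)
Votre démonstration est correcte et suit essentiellement la même voie que le texte : l'article ne donne d'ailleurs pas de preuve formelle, l'argument étant entièrement contenu dans le paragraphe qui précède l'énoncé (équivalence classique groupes finis localement libres / algèbres de Hopf, puis traduction des relations (\ref{rel:conv}) en la donnée d'un morphisme d'anneaux $\iota\colon\BF\to\End(A)$ pour l'addition de convolution). Vous explicitez utilement le point que le texte laisse implicite, à savoir que l'axiome de distributivité $(\lambda+\lambda')v=\lambda v+\lambda' v$ correspond exactement à l'additivité de $\iota$ pour la convolution.
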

\begin{remark}
Pour $S$ un schéma quelconque sur $D$, on obtient sans peine une version faisceautique du théorème précédent : se donner un schéma en $\BF$-vectoriels $G$ sur $S$ est équivalent à se donner une $\sO_S$-algèbre $\sA$, avec les mêmes conditions de finitudes, munie d'un morphisme de faisceaux d'anneaux $\BF \rightarrow \End(\sA)=\End(G)$.
\end{remark}

\subsubsection{}
Soit $\BF^{\vee} = \Hom(\BF^{\times},\mu_{q-1}(D))$ les caractères multiplicatifs de $\BF^{\times}$. Notons que comme l'ordre de $\BF^{\times}$ est $(q-1)$, qui est inversible dans $D$ par construction, $\BF^{\vee}$ est l'ensemble des représentations linéaires de $\BF^{\times}$ à coefficients dans $D$. En particulier, 
$$
D[\BF^{\times}] = \prod_{\chi\in \BF^{\vee}}D_{\chi},
$$
où $D_{\chi}$ est la représentation de $\BF^{\times}$ sur $D$ de rang $1$ associée à $\chi \in \BF^{\vee}$. Pour tout $\chi \in \BF^{\vee}$ on note $p_{\chi} \colon D[\BF^{\times}] \rightarrow D_{\chi}$ le projecteur associé à cette décomposition.

Soit $G$ un schéma en $\BF$-vectoriels et $\sA$ la $\sO_S$-algèbre de Hopf associée. Alors $\sA$ est en particulier un faisceau de représentations de $\BF^{\times}$, i.e., par le morphisme structural $D \rightarrow \sO_S$, un faisceau en $D[\BF^{\times}]$-modules. Ainsi, par ce qui précède
$$
\sA = \bigoplus_{\chi \in \BF^{\vee}} \sA_{\chi},\ \sA_{\chi} = p_{\chi}(\sA).
$$
Il est de plus clair que cette décomposition est une graduation de $\sO_S$-algèbre de Hopf, i.e. elle est compatible au produit et au co-produit.
On a montré la proposition suivante : 
\goodbreak
\begin{proposition}
Soit $G$ un schéma en $\BF$-vectoriels sur $S$, alors sa $\sO_S$-algèbre de Hopf $\sA$ est munie d'une $\BF^{\vee}$-graduation de $\sO_S$-algèbre de Hopf qui détermine la co-action de $\BF$.
\end{proposition}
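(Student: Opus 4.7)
L'id�e est d'exploiter la d�composition de l'anneau de groupe $D[\BF^{\times}] = \prod_{\chi \in \BF^{\vee}} D_{\chi}$ rappel�e ci-dessus, puis de v�rifier que la d�composition qu'elle induit sur $\sA$ est compatible avec la structure d'alg�bre de Hopf et qu'elle d�termine enti�rement la co-action de $\BF$. Puisque $(q-1)$ est inversible dans $D$, les projecteurs $p_{\chi}$ forment un syst�me d'idempotents orthogonaux de $D[\BF^{\times}]$ de somme $1$. Via le morphisme structural $D \to \sO_S$, $\sA$ est un faisceau de $D[\BF^{\times}]$-modules, et l'application de ces idempotents fournit imm�diatement la d�composition $\sA = \bigoplus_{\chi \in \BF^{\vee}} \sA_{\chi}$ avec $\sA_{\chi} = p_{\chi}(\sA)$.

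Pour la compatibilit� avec le produit et le co-produit, j'utiliserais la proposition \ref{prop:eqaff} qui stipule que $\iota$ est � valeurs dans l'anneau des endomorphismes de l'\emph{alg�bre de Hopf} $\sA$. Ainsi, chaque $[\lambda]$ pour $\lambda \in \BF^{\times}$ est d�j� un endomorphisme d'alg�bre de Hopf, et m�me un automorphisme d'inverse $[\lambda^{-1}]$ gr�ce � la premi�re relation de (\ref{rel:conv}). Il commute donc � $\mu$ et � $\Delta$. Pour $a \in \sA_{\chi}$, $b \in \sA_{\chi'}$, on a alors $[\lambda](ab) = \chi(\lambda)\chi'(\lambda)(ab)$, d'o� $\mu(\sA_{\chi} \otimes \sA_{\chi'}) \subset \sA_{\chi\chi'}$ ; dualement, l'identit� $\Delta \circ [\lambda] = ([\lambda] \otimes [\lambda]) \circ \Delta$ appliqu�e � un �l�ment de $\sA_{\chi}$ et un argument d'ind�pendance lin�aire des caract�res imposent $\Delta(\sA_{\chi}) \subset \bigoplus_{\chi_1 \chi_2 = \chi} \sA_{\chi_1} \otimes \sA_{\chi_2}$.

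Enfin, pour voir que la graduation d�termine la co-action de $\BF$, il suffit de retrouver chaque $[\lambda]$ � partir de la d�composition. Pour $\lambda \in \BF^{\times}$, $[\lambda]$ agit sur $\sA_{\chi}$ par le scalaire $\chi(\lambda)$, donc est d�termin�. Quant � $[0]$, c'est l'�l�ment neutre pour l'addition de $\End(\sA)$ (donn�e par la convolution), qui n'est autre que la compos�e $\eta \circ \varepsilon$ ; il ne d�pend donc que de la structure d'alg�bre de Hopf de $\sA$. Je ne pr�vois aucune difficult� s�rieuse ; le seul point m�ritant de l'attention est de bien distinguer les deux structures d'anneau sur $\End(\sA)$ (convolution pour l'addition, composition pour le produit) lors de la manipulation des relations (\ref{rel:conv}).
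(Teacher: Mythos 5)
Votre preuve est correcte et suit essentiellement la m�me route que l'article : la d�composition de $\sA$ en composantes isotypiques via les idempotents orthogonaux $p_{\chi}$ de $D[\BF^{\times}]$ (en utilisant que $q-1$ est inversible dans $D$), la compatibilit� de Hopf se d�duisant du fait que chaque $[\lambda]$ est un endomorphisme d'alg�bre de Hopf. L'article laisse la compatibilit� avec $\mu$ et $\Delta$ ainsi que la reconstruction de la co-action (en particulier $[0]=\eta\circ\varepsilon$) comme imm�diates, si bien que votre r�daction ne fait qu'expliciter ce que l'article d�clare \emph{clair}.
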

\qed

\begin{remark}
Notons que comme $\BF^{\times}$ agit trivialement sur $\sO_S$, la graduation sur le noyau de la co-unité $\sA\rightarrow \sO_S$ est induite par la graduation sur $\sA$. Ainsi la graduation de l'idéal d'augmentation est de la forme
$$
\sI = \bigoplus_{\chi \in \BF^{\vee}}\sI_{\chi}, \ \sI = p_{\chi}(\sI).
$$
C'est la graduation considérée dans \cite{ray}.
\end{remark}
\goodbreak
\subsubsection{}
Soit $G$ un schéma en $\BF$-vectoriels sur $S$ et $\sA$ sa $\sO_S$-algèbre de Hopf graduée associée. On suppose pour simplifier les notations que $S$ soit connexe ; on va s'intéresser au rang des $\sA_{\chi}$ que l'on notera $\rang_S(\sA_{\chi})$.
\begin{definition}
On conserve les notations et hypothèses précédentes. On définit \emph{le caractère de $G$} par
$$
\Ch_S(G) = \sum_{\chi \in \BF^{\vee}}\rang_S(\sA_{\chi})[\chi]\in \BN[\BF^{\vee}].
$$
\end{definition}
Cette définition a toujours un sens si $G$ est un schéma en groupe fini et plat muni d'une action de $\BF^{\times}$. On remarque que le caractère est invariant par changement de base : pour $S'\rightarrow S$ un morphisme on a 
$$
\Ch_S(G) = \Ch_{S'}(G\otimes_S S').
$$
Notons de même que pour tout $\chi \in \BF^{\vee}$ la fonction de rang
$$
s\in S \mapsto \dim_{\kappa(s)}(\sA_{\chi}\otimes_S\kappa(s)),
$$
où $\kappa(s)$ est le corps résiduel en $s\in S$, est constante sur $S$ égale au rang de $\sA_{\chi}$ comme $\sO_S$-module. Ainsi pour tout point géométrique $\Spec (k) \hookrightarrow S$ on a 
$$
\Ch_S(G) = \Ch_k(G\otimes_S k).
$$
Ceci nous permettra de restreindre le calcul du caractère au cas où $S$ est le spectre d'un corps parfait.
\subsubsection{}\label{sssub:dual}
Soit $G$ un schéma en $\BF$-vectoriels sur $S$. Soit $G^D=\sHom(G,\BG_m)$ le dual de Cartier du schéma en groupe fini et plat sous-jacent. Alors comme $\BF^{\times}$ est commutatif, $G^D$ est muni d'une action de $\BF^{\times}$ définie par :
$$
\forall f\in G^D, \lambda \in \BF, x\in G,\ (\lambda f)(x) = f(\lambda x)
$$
De plus, il est clair que cette action définit naturellement une structure de $\BF$-vectoriels sur $G$. On obtient 
$$
\Ch_S(G) = \Ch_S(G^D).
$$
Il est important de noter que ceci est faux si on munit $G^D$ de l'action contragrédiente de $\BF^{\times}$.
\subsection{Cristaux de Dieudonné gradués}
\subsubsection{}
\begin{definition}
Soit $\chi \in \BF^{\vee}$. On dit que $\chi$ est \emph{primitif} s'il est additif. On note $\BF^{+}\subset \BF^{\vee}$ l'ensemble des caractères primitifs de $\BF^{\vee}$.
\end{definition}

\begin{remark}\label{rem:primfond}
\ 
\begin{itemize}
\item[$\bullet$] On a un isomorphisme d'anneau $D' \cong \BZ[\BF^{\vee}]$ qui identifie les racines primitives de l'unité aux caractères primitifs. C'est pourquoi on préfère cette terminologie à celle de "fondamental" dans \cite{ray}.
\item[$\bullet$]Si $\chi$ un caractère primitif, alors $\chi^p$ est primitif et $\BF^+ = \{\chi,\chi^p,\chi^{p^2},\dots, \chi^{p^{r-1}}\}$. On a de plus un isomorphisme\footnote{Les isomorphismes sont donnés par les identifications $\BF^{\times} \cong \mu_{q-1}(D)\subset C$.} non-canonique $\BF^{+}\cong \Gal(\BF/\BF_p)$, plus précisément, $\BF^{+}$ est un espace principal homogène sous $\Gal(\BF/\BF_p)$.
\end{itemize}
\end{remark}
On fixe un caractère primitif $\chi_1 \in \BF^{+}$ et on note $\chi_i = \chi_1^{p^i}$ pour tout entier $i\in\llbracket 0, r-1 \rrbracket$. Tout caractère non-trivial $\chi \in \BF^{\vee}$ a un unique développement $p$-adique de la forme
$$
\chi = \prod_{i=0}^{p-1}\chi_i^{a_i}, \text{tel que $a_i \in \llbracket 0, p-1 \rrbracket$, pour tout $i\in\llbracket 0, r-1 \rrbracket$}.
$$
Le caractère trivial $\chi_0$ s'écrit de deux manières différentes : on a $\chi_0 = \prod_{i=0}^{p-1}\chi_i^{p-1}=\chi_1^0$.

\subsubsection{}
On introduit quelques notations dans ce paragraphe, suivant principalement \cite{bbm}. Soit $\Sigma = \Spec \W(\BF)$, c'est un schéma sur $D$. Supposons que $S$ est un schéma sur $\Sigma$ tel que $p$ est (Zariski)-localement nilpotent sur $S$. On note $\sigma \colon \W(\BF) \rightarrow \W(\BF)$ l'automorphisme de Frobenius. On considère la catégorie cristalline $\CRIS(S/\Sigma)$ munie de la topologie fppf. Soit $G$ un schéma en groupes fini et plat sur $S$, alors on définit un préfaisceau sur $\CRIS(S/\Sigma)$ en posant
$$
\underline G (U,T,\delta) = \Hom(U,G),
$$
pour tout objet $(U,T,\delta)$ de $\CRIS(S/\Sigma)$. Le préfaisceau $\underline G$ est en effet un faisceau fppf (cf. \cite[Corolaire 1.1.8]{bbm}). Soit $\sO_{S/\Sigma}$ le faisceau structural usuel sur le site $\CRIS(S/\Sigma)$. On considère le \emph{cristal de Dieudonné (covariant)} de $G$,
$$
\BD(G) = \sExt^1_{S/\Sigma}(\underline {G^D} , \sO_{S/\Sigma})^{(\sigma^{-1})},
$$
qui est un cristal en $\sO_{S/\Sigma}$-modules localement de présentation finie (cf. \cite[Corollaire 3.1.3]{bbm}). Notez qu'on change $G$ par $G^D$ par rapport à la définition de \cite{bbm} pour avoir un foncteur covariant, et que l'on tord l'action de $\W(\BF)$ par $\sigma^{-1}$. Cette dernière modification nous évitera d'introduire des décalages dans la suite. La dualité de Cartier $G\rightsquigarrow G^D = \sHom_S(G,\BG_m)$ est une anti-équivalence exacte des schémas en groupes finis et localement libres sur $S$, donc cette modification est innocente pour l'application des résultats de \cite{bbm} dans la suite.

Si $G$ est annulé par $p$, de rang constant $p^d$, 
par exemple si $G$ un schéma en $\BF$-vectoriels sur une base $S$ connexe,
alors $\BD(G)$ est un $\sO_{S/\Sigma}/p\sO_{S/\Sigma}$-module localement libre de rang fini $d$ (cf. \cite[Proposition 4.3.1]{bbm}) et de plus le foncteur $G\rightsquigarrow \BD(G)$ restreint à la catégorie des schémas en $\BF$-vectoriels est exact.

\subsubsection{}
Supposons que $G$ est un schéma en $\BF$-vectoriels sur $S$. Alors l'action de $\BF^{\times}$ induit une structure de $\sO_{S/\Sigma}$-module $\BF^{\times}$-équivariant sur $\BD(G)$, ou de manière équivalente, une graduation de type $\BF^{\vee}$ sur $\BD(G)$. Plus précisément, par la fonctorialité de $\BD$ l'endomorphisme
$$
[\lambda] \colon G\rightarrow G,
$$
pour $\lambda \in \BF^{\vee}$, induit un morphisme $\lambda \cdot \colon \BD(G) \rightarrow \BD(G)$. De même pour tout $\chi \in \BF^{\vee}$, on peut définir un endomorphisme idempotent $p_{\chi}\cdot \colon \BD(G) \rightarrow \BD(G)$. On a la proposition suivante :
\begin{proposition}\label{prop:gradcris}
Soit $G$ un schéma en $\BF$-vectoriels sur $S$. Alors
$$
\BD(G) = \bigoplus_{\chi\in \BF^+}\BD(G)_{\chi},\ \text{ où } \ \BD(G)_{\chi} = p_{\chi}\cdot\BD(G).
$$
Ainsi, $\BD$ est un foncteur de la catégorie des schémas en $\BF$-vectoriels sur $S$ dans la catégorie des $\sO_{S/\Sigma}/p\sO_{S/\Sigma}$-modules gradués sur $\BF^+$.
\end{proposition}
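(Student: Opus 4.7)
Le plan se découpe en trois étapes, exploitant successivement la fonctorialité du cristal de Dieudonné et son additivité.

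\emph{Étape 1 : décomposition indexée par $\BF^{\vee}$.} Par fonctorialité, l'action de $\BF^{\times}$ sur $G$ induit une action $\sO_{S/\Sigma}$-linéaire sur $\BD(G)$. Puisque $(q-1)$ est inversible dans $D$, la $D$-algèbre de groupe $D[\BF^{\times}]$ se décompose en $\prod_{\chi\in\BF^{\vee}} D_{\chi}$ via les idempotents orthogonaux $p_{\chi}$, d'où une décomposition a priori indexée par $\BF^{\vee}$ :
$$
\BD(G) = \bigoplus_{\chi\in\BF^{\vee}} p_{\chi}\cdot\BD(G).
$$

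\emph{Étape 2 : extension en une action de l'anneau $\BF$.} Le foncteur $\BD$, défini via $\sExt^1$, est additif en son argument. Or, pour le schéma en groupes commutatif $G$, la convolution $\mu\circ(f\otimes g)\circ\Delta$ coïncide avec la somme $f+g$ dans $\Hom(G,G)$. En appliquant $\BD$ aux relations \eqref{rel:conv}, on obtient pour tous $\lambda,\lambda'\in\BF$ :
$$
\BD([\lambda+\lambda'])=\BD([\lambda])+\BD([\lambda']),\quad \BD([\lambda\lambda'])=\BD([\lambda])\circ\BD([\lambda']),\quad \BD([0])=0,
$$
soit un morphisme d'anneaux $\BF\to\End_{\sO_{S/\Sigma}}(\BD(G))$ prolongeant l'action de $\BF^{\times}$.

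\emph{Étape 3 : annulation hors de $\BF^{+}$.} Sur la composante $p_{\chi}\cdot\BD(G)$, tout $\lambda\in\BF^{\times}$ agit comme multiplication par $\chi(\lambda)\in\mu_{q-1}(D)$, vu dans $\sO_{S/\Sigma}/p\sO_{S/\Sigma}$. Si cette composante est non nulle, la compatibilité avec l'action additive de $\BF$ force alors $\chi(\lambda+\lambda')=\chi(\lambda)+\chi(\lambda')$ (avec $\chi(0)=0$), cette égalité ayant lieu déjà dans $D$ puisque la réduction $\mu_{q-1}(D)\sqcup\{0\}\to \W(\BF)/p\W(\BF)=\BF$ est bijective. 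Autrement dit, $\chi$ est primitif : $\chi\in\BF^{+}$. Par conséquent $p_{\chi}\cdot\BD(G)=0$ pour $\chi\notin\BF^{+}$, et l'on obtient la décomposition annoncée. La fonctorialité en $G$ est immédiate, tout morphisme de schémas en $\BF$-vectoriels commutant aux endomorphismes $[\lambda]$ et donc aux projecteurs $p_{\chi}$.

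Le point délicat est à l'étape 2 : il faut identifier la convolution à l'addition dans $\Hom(G,G)$, ce qui répète la définition même du groupe abélien $\Hom(G,G)$ pour $G$ commutatif, puis invoquer l'additivité de $\sExt^1$. Une fois ce point acquis, le reste est formel.
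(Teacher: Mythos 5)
Votre démonstration est correcte et suit essentiellement la même stratégie que celle de l'article : décomposition en composantes isotypiques indexées par $\BF^{\vee}$ via les idempotents de $D[\BF^{\times}]$, puis utilisation de l'additivité du foncteur $\BD$ (morphisme d'anneaux $\End(G)\to\End(\BD(G))$ envoyant la convolution sur la somme) pour montrer que $p_{\chi}\cdot\BD(G)=0$ dès que $\chi$ n'est pas additif. Votre précision sur la bijectivité de la réduction $\mu_{q-1}(D)\sqcup\{0\}\to\BF$ explicite utilement un point que l'article laisse implicite.
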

\goodbreak
\begin{proof}
Par ce qui précède, on a 
$$
\BD(G) = \bigoplus_{\chi\in \BF^{\vee}}\BD(G)_{\chi},\ \text{ où } \ \BD(G)_{\chi} = p_{\chi}\cdot\BD(G).
$$
Il suffit de montrer que pour $\chi\in \BF^{\vee}\backslash \BF^+$, $p_{\chi}\cdot\BD(G) = 0$. Or, le foncteur $\BD$ est additif et donc on obtient un morphisme d'anneaux
$$
\End(G) \rightarrow \End(\BD(G)).
$$
Soit $\lambda, \lambda' \in \BF$. Comme le morphisme ci-dessus associe $\lambda\cdot$ à $[\lambda]$ et comme il respecte l'addition, on a 
$$
(\lambda +\lambda')\cdot=\lambda\cdot + \lambda' \cdot.
$$
Si $\chi \in \BF^{\vee}$ alors $\lambda\cdot \colon \BD(G)_{\chi}\rightarrow \BD(G)_{\chi}$ équivaut à la multiplication par $\chi(\lambda)$. Ainsi, si $\BD(G)_{\chi}$ est non-nul, $\chi(\lambda+\lambda') = \chi(\lambda)+\chi(\lambda')$, et donc $\chi\in \BF^+$.
\end{proof}
On pose $S_{\BF} = S\otimes_{\BZ_p}\W(\BF)$. C'est naturellement un schéma sur 
$$
\W(\BF) \otimes_{\BZ_p} \W(\BF) = \prod_{\chi\in \BF^+}\W(\BF)_{\chi},
$$
donc, comme $S$ est un schéma sur $\W(\BF)$, on obtient
$$
S_{\BF} = \bigsqcup_{\chi\in\BF^+}S_{\chi} \text{ où } S_{\chi}=S\otimes_{\W(\BF)}\W(\BF)_{\chi}.
$$
Ainsi,
$$
 \sO_{S_{\BF}/\Sigma} = \bigoplus_{\chi\in\BF^+}(\sO_{S/\Sigma})_{\chi}.
$$
La proposition précédente peut alors s'énoncer de la manière suivante : 
\begin{corollary}\label{cor:gradeq}
Le foncteur $\BD$ induit un foncteur exact de la catégorie des schémas en $\BF$-vectoriels dans la catégorie des cristaux en $\sO_{S_{\BF}/\Sigma}/p\sO_{S_{\BF}/\Sigma}$-modules localements libres de rang fini et de présentation finie.
\end{corollary}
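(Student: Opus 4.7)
Le plan est de voir ce corollaire comme une reformulation imm�diate de la proposition \ref{prop:gradcris}, en identifiant la d�composition isotypique de $\BD(G)$ avec la d�composition de $\sO_{S_{\BF}/\Sigma}$ rappel�e juste au-dessus de l'�nonc�. Tout d'abord, j'observerais que $\BD(G)$ poss�de naturellement deux structures de $\W(\BF)$-module commutant entre elles : l'une provenant de sa structure de $\sO_{S/\Sigma}$-module (puisque $S$ est un sch�ma sur $\Sigma = \Spec\W(\BF)$), l'autre induite par l'action de $\BF^{\times}$ sur $G$ via le rel�vement de Teichm�ller $\BF^{\times} = \mu_{q-1}(\W(\BF)) \hookrightarrow \W(\BF)^{\times}$ prolong� par $\BZ_p$-lin�arit�. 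Ceci fait de $\BD(G)$ un module sur $\sO_{S/\Sigma}\otimes_{\BZ_p}\W(\BF)$, et donc, apr�s r�duction modulo $p$, sur $\sO_{S_{\BF}/\Sigma}/p\sO_{S_{\BF}/\Sigma} = \prod_{\chi\in\BF^+}(\sO_{S/\Sigma}/p\sO_{S/\Sigma})_{\chi}$.

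Ensuite, la preuve de la proposition \ref{prop:gradcris} montre que sur la composante $\chi$-isotypique $\BD(G)_{\chi}$ (pour $\chi\in\BF^+$), l'action de $\lambda\in\BF^{\times}$ co�ncide avec la multiplication par $\chi(\lambda)\in\W(\BF)^{\times}$ ; autrement dit, la seconde action de $\W(\BF)$ se factorise par le plongement associ� � $\chi$, et donc � travers le facteur $\W(\BF)_{\chi}$ du produit $\W(\BF)\otimes_{\BZ_p}\W(\BF)=\prod_{\chi\in\BF^+}\W(\BF)_{\chi}$. Par cons�quent la d�composition $\BD(G)=\bigoplus_{\chi\in\BF^+}\BD(G)_{\chi}$ co�ncide avec la d�composition canonique de tout $\sO_{S_{\BF}/\Sigma}/p$-module le long des facteurs du produit, ce qui donne exactement la structure de cristal sur $\CRIS(S_{\BF}/\Sigma)$ voulue.

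Pour finir, l'exactitude du foncteur et la propri�t� d'�tre localement libre de rang fini et de pr�sentation finie sont h�rit�es des propri�t�s analogues de $\BD$ restreint aux sch�mas annul�s par $p$, rappel�es avant la proposition \ref{prop:gradcris} (\cite[Proposition 4.3.1]{bbm}) : un facteur direct d'un module localement libre de rang fini sur un produit d'anneaux reste localement libre de rang fini sur le facteur correspondant. Le seul point qui m�rite une attention est l'identification pr�cise de l'action de $\W(\BF)$ provenant des caract�res primitifs avec les projecteurs de la d�composition produit de $\W(\BF)\otimes_{\BZ_p}\W(\BF)$, mais ceci d�coule imm�diatement de la remarque \ref{rem:primfond} qui identifie $\BF^+$ aux plongements de $\BF$ dans $\W(\BF)$.
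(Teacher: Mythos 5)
Votre argument est correct et suit essentiellement la m\^eme voie que le papier, qui traite d'ailleurs ce corollaire comme une simple reformulation de la proposition \ref{prop:gradcris} combin\'ee \`a la d\'ecomposition $\W(\BF)\otimes_{\BZ_p}\W(\BF)=\prod_{\chi\in\BF^+}\W(\BF)_{\chi}$ et aux propri\'et\'es d'exactitude et de libert\'e locale rappel\'ees juste avant (il n'en donne aucune d\'emonstration). Votre r\'edaction ne fait qu'expliciter cette identification ; tout au plus la formulation \og prolong\'e par $\BZ_p$-lin\'earit\'e \fg{} du rel\`evement de Teichm\"uller gagnerait \`a \^etre remplac\'ee par l'observation que, $\BD(G)$ \'etant annul\'e par $p$, le morphisme d'anneaux $\BF\to\End(\BD(G))$ issu de la structure vectorielle se rel\`eve trivialement \`a $\W(\BF)$ via la surjection $\W(\BF)\to\BF$.
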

\qed

\begin{remark}
Ce foncteur pour les schémas en $\BF$-vectoriels possède les mêmes propriétés que le foncteur sous-jacent pour les schémas en groupes finis et localement libres, i.e pour certaines restrictions sur $S$ il est fidèle, pleinement fidèle ou une équivalence de catégories\footnote{Par exemple si $S$ est de caractéristique $p$, localement noetherien d'anneaux locaux à intersection complète, \BD est fidèle \cite{deme} et si $S$ est le spectre d'un corps parfait c'est une équivalence comme on le rappellera.}.
\end{remark}

Supposons $S$ connexe et fixons $G$ un schéma en $\BF$-vectoriels sur $S$. Alors comme $\BD(G)$ est localement libre sur $\sO_{S/\Sigma}/p\sO_{S/\Sigma}$, on peut définir $\rang_{\sO_{S/\Sigma}/p\sO_{S/\Sigma}}(\BD(G)_{\chi})$, le rang de la composante $\chi$-isotypique, pour tout $\chi \in \BF^+$.

\begin{definition}\label{def:infcar}
Soit $\sF$ un $\sO_{S_{\BF}/\Sigma}/p\sO_{S_{\BF}\Sigma}$-module localement libre de rang fini. On définit le \emph{caractère} de $\sF$ par
$$
\cha_S(\sF) = \sum_{\chi\in \BF^+}\rang_{\sO_{S/\Sigma}/p\sO_{\Sigma}}(\sF_{\chi})[\chi]\in \BN[\BF^+].
$$

\end{definition}
On considèrera en particulier $\cha_S(\BD(G))$. 
\subsubsection{}\label{sssec:bla}
Dans ce paragraphe on suppose que $S=\Spec(k)$ pour $k$ un corps parfait de caractéristique $p$ contenant $\BF$. Soit $G$ un schéma en groupes fini et localement libre sur $S$. On pose
$$
D(G) = \Gamma(S/\Sigma,\BD(G)).
$$
C'est un $\W(k)$-module de longueur finie (cf. \cite[Proposition 4.2.10]{bbm}) muni de deux morphismes $F \colon D(G) \rightarrow D(G)^{(\sigma)}$ et $V \colon D(G)^{(\sigma)}\rightarrow D(G)$ tels que $FV=VF=p$. C'est le dual du module de Dieudonné de $G$ au sens de \cite[Chapitre IV]{font} d'après \cite[Théorème 4.2.14]{bbm}, i.e. c'est le module de Dieudonné covariant de $G$. On sait que $D$ établit une équivalence entre la catégorie des schémas en groupes finis et les modules de Dieudonné sur $\W(k)$ (cf. \cite[IV 1.3]{font}). 

Si $G$ est un schéma en $\BF$-vectoriel alors d'après le corollaire \ref{cor:gradeq}, $D(G)$ est naturellement un module de Dieudonné gradué sur $\BF^+$ au sens suivant : 
\begin{definition}
Soit $M$ un $W(k)$ module muni de deux morphismes $F \colon M \rightarrow M^{(\sigma)}$ et $V \colon M^{(\sigma)}\rightarrow M$ tels que $FV=VF=0$. On dit que $M$ est un \emph{module de Dieudonné gradué} sur $\BF^+$ si on a de plus une graduation
$$
M = \bigoplus_{\chi \in \BF^+} M_{\chi}
$$
telle que $F$ et $V$ se restreignent en
$$
F \colon M_{\chi} \rightarrow M_{\chi^p}, \ V \colon M_{\chi^p} \rightarrow M_{\chi},
$$
pour tout $\chi \in \BF^+$.
\end{definition}
Ainsi l'équivalence de catégories pour les schémas en groupes finis et localement libres nous donne la caractérisation suivante :
\begin{proposition}
Le foncteur $D$ induit une équivalence de catégories 
$$
\{\text{Schémas en $\BF$-vectoriels sur }k\}\xleftrightarrow {}  \{ \W(k)\text{-modules de Dieudonné de longueur finie gradués sur } \BF^+\}.
$$
\end{proposition}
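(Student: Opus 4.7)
Le plan est de transporter l'équivalence de Dieudonné classique à la donnée supplémentaire de l'action de $\BF$. D'après la Proposition \ref{prop:eqaff}, un schéma en $\BF$-vectoriels sur $k$ est équivalent à un schéma en groupes fini et plat $G$ sur $k$ muni d'un morphisme d'anneaux $\iota \colon \BF \to \End(G)$. Puisque $D$ est une équivalence entre schémas en groupes finis sur $k$ et modules de Dieudonné de longueur finie sur $\W(k)$ (\cite[IV 1.3]{font}), elle induit un isomorphisme d'anneaux $\End(G)\cong \End_{\mathrm{Dieud}}(D(G))$, et donc une équivalence entre schémas en $\BF$-vectoriels et couples $(M,\iota)$ où $M$ est un module de Dieudonné et $\iota \colon \BF\to \End_{\mathrm{Dieud}}(M)$ un morphisme d'anneaux. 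Il restera à identifier ces couples aux modules de Dieudonné gradués sur $\BF^+$.

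Je décomposerai d'abord l'action multiplicative $\iota|_{\BF^{\times}}$. Comme $q-1$ est inversible dans $\W(k)$ et que les caractères $\chi \in \BF^{\vee}$ prennent leurs valeurs dans les racines $(q-1)$-ièmes de l'unité de $\W(k)$, le module $M$ se décompose en isotypiques $M=\bigoplus_{\chi\in \BF^{\vee}}M_{\chi}$, où $M_\chi = \{m\in M \mid \iota(\lambda)m = \chi(\lambda)m,\ \forall \lambda \in \BF^{\times}\}$. Exactement comme dans la preuve de la Proposition \ref{prop:gradcris}, l'additivité de $\iota$, lue composante par composante, force $M_{\chi}=0$ pour $\chi\notin \BF^+$. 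Réciproquement, partant d'une graduation sur $\BF^+$, on définit $\iota(\lambda)m := \chi(\lambda)m$ pour $m\in M_{\chi}$ (avec la convention $\chi(0):=0$) ; la primitivité des $\chi\in \BF^+$ rend $\lambda\mapsto \iota(\lambda)$ additive, et la multiplicativité est immédiate.

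L'étape la plus délicate sera la compatibilité de $F$ et $V$ avec la graduation. Comme les $\iota(\lambda)$ sont des morphismes de Dieudonné, ils commutent avec $F$ et $V$. Pour $m\in M_{\chi}$, on a $\iota(\lambda) F(m) = F(\iota(\lambda)m) = F(\chi(\lambda)m) = \sigma(\chi(\lambda))F(m)$, par $\sigma$-semi-linéarité de $F$. Or $\chi(\lambda)$ est une racine $(q-1)$-ième de l'unité de $\W(k)$, donc un relèvement de Teichmüller, et $\sigma$ agit sur ces derniers par élévation à la puissance $p$ ; ainsi $\sigma(\chi(\lambda)) = \chi(\lambda)^p = \chi^p(\lambda)$, ce qui prouve $F(m)\in M_{\chi^p}$. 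Un raisonnement analogue avec $V$ donne $V \colon M_{\chi^p}\to M_{\chi}$. L'obstacle principal consistera donc essentiellement à bien suivre les conventions de twist par $\sigma^{-1}$ et de covariance adoptées ici, puis à constater que la construction inverse (graduation $\Rightarrow$ action de $\BF$) est bien quasi-inverse de la première.
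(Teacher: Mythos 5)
Votre démonstration est correcte et suit essentiellement la même voie que le texte, qui déduit l'énoncé de l'équivalence classique de Dieudonné sur un corps parfait combinée au corollaire \ref{cor:gradeq} (lui-même issu de la proposition \ref{prop:gradcris}) et laisse la vérification en exercice. Vous explicitez utilement le point que le texte passe sous silence — la compatibilité de $F$ et $V$ avec la graduation via la $\sigma$-semi-linéarité et le fait que $\sigma$ élève les relèvements de Teichmüller à la puissance $p$ — et cet argument est juste.
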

\qed

\subsubsection{}\label{sssec:redp}
Soit $M$ un $\W(k)$-module de Dieudonné gradué sur $\BF^+$. De même que dans la définition \ref{def:infcar}, on peut définir le caractère de $M$ : 
$$
\cha_k(M) = \sum_{\chi\in\BF^+}\dim_k(M_{\chi}) [\chi]\in\BN[\BF^+].
$$
Soit à nouveau $S$ un schéma connexe sur $\Sigma$ tel que $p$ est localement nilpotent sur $S$. Soit $s\colon \Spec(k)\hookrightarrow S$ un point géométrique. Soit $G$ un schéma en $\BF$-vectoriels sur $S$, alors comme $\BD(G)$ est un cristal et qu'il est compatible aux changements de base, on a pour tout élément $(U,T,\delta)$ de $\CRIS(S/\Sigma)$ un isomorphisme
$$
D(G_k)\cong \BD(G_k)_{\Spec k} \cong \BD(G)_{(U,T,\delta)}\otimes_{\sO_T} k. 
$$
Ainsi, comme $\BD(G)$ est un $\sO_{S/\Sigma}/p\sO_{S/\Sigma}$ module libre de rang fini, on a
$$
\cha_S(\BD(G)) = \cha_k(D(G_k)).
$$
Ceci nous permettra de ramener le calcul du caractère sur une base générale au calcul du caractère sur un corps parfait. Le cristal de Dieudonné ne se comporte pas très bien par rapport à la dualité de Cartier. Néanmoins, la description précédente nous donne le résultat suivant :
\begin{lemma}\label{lem:dual}
Soit $G$ un schéma en $\BF$-vectoriels sur $S$ connexe. Alors
$$
\cha_S(\BD(G)) = \cha_S(\BD(G^D)).
$$
\end{lemma}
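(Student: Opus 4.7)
Le plan consiste d'abord � utiliser la discussion de \S\ref{sssec:redp} pour se ramener au cas o� $S = \Spec k$ est le spectre d'un corps parfait de caract�ristique $p$ contenant $\BF$. Pour un point g�om�trique $s \colon \Spec k \hookrightarrow S$, on a en effet $\BD(G)_{(U,T,\delta)}\otimes_{\sO_T} k \cong D(G_k)$, d'o�
$$
\cha_S(\BD(G)) = \cha_k(D(G_k)),
$$
et de m�me pour $G^D$, en observant que la formation du dual de Cartier commute au changement de base. Il suffit donc de v�rifier l'�galit�
$$
\cha_k(D(G)) = \cha_k(D(G^D))
$$
lorsque $S=\Spec k$.

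J'invoquerai ensuite la dualit� de Dieudonn� classique : pour tout sch�ma en groupes commutatif fini et plat sur $k$ annul� par $p$, il existe un isomorphisme naturel de $W(k)$-modules de Dieudonn�
$$
D(G^D) \cong \Hom_k(D(G), k),
$$
avec �change convenable de $F$ et $V$. Ce r�sultat est standard et sera admis.

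Le c\oe ur de la preuve est alors la compatibilit� de cet isomorphisme avec l'action de $\BF^{\times}$. Par d�finition de l'action sur le dual de Cartier (\S\ref{sssub:dual}), pour tout $\lambda \in \BF^{\times}$, l'endomorphisme $[\lambda]_{G^D}$ est le dual de Cartier de $[\lambda]_G$. Par fonctorialit� de $D$ et compatibilit� avec la dualit� ci-dessus, $D([\lambda]_{G^D})$ s'identifie � la transpos�e $k$-lin�aire de $D([\lambda]_G)$. Or la transpos�e d'une homoth�tie de rapport $\chi(\lambda)$ est une homoth�tie de m�me rapport sur l'espace dual. Ainsi la composante $\chi$-isotypique $D(G^D)_{\chi}$ s'identifie au $k$-dual de $D(G)_{\chi}$ ; en particulier, ces deux espaces ont la m�me dimension pour tout $\chi \in \BF^+$, ce qui donne l'�galit� annonc�e en sommant sur $\chi$.

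La seule difficult� technique anticip�e est le suivi soigneux des twists de Frobenius dans la dualit� de Dieudonn� (notamment � cause du twist par $\sigma^{-1}$ introduit dans la d�finition de $\BD$). Ces twists n'affectent cependant pas les dimensions $k$-lin�aires des composantes gradu�es et sont donc sans cons�quence pour le calcul du caract�re infinit�simal.
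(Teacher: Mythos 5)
Votre démonstration est correcte et suit essentiellement la même route que celle de l'article : réduction au cas d'un corps parfait via \S\ref{sssec:redp}, invocation de la dualité de Dieudonné $D(G^D)\cong D(G)^*$ (Fontaine), puis observation que l'action de $\BF^{\times}$ sur le dual est l'action transposée — et non la contragrédiente — de sorte que $D(G^D)_{\chi}$ s'identifie au dual de $D(G)_{\chi}$ et les dimensions isotypiques sont préservées. La seule différence est cosmétique : l'article écrit le dual comme $\Hom_{\W(k)}(M,\W(k)[\tfrac1p]/\W(k))$ plutôt que $\Hom_k(M,k)$, ce qui revient au même puisque $G$ est annulé par $p$.
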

\begin{proof}
D'après ce qui précède, on peut supposer que $S= \Spec(k)$ où $k$ est un corps parfait de caractéristique $p$. On pose $\W_{\infty}= \W(k)[\frac 1 p]/\W(k)$ le module dualisant et on définit pour tout $\W(k)$-module de Dieudonné gradué sur $\BF^+$, le $\W(k)$-module
$$
M^*=\Hom_{\W(k)}(M,\W_{\infty}).
$$
Comme dans le cas des schémas en $\BF$-vectoriels, $M^*$ est naturellement muni d'une structure de $\W(k)$-module gradué\footnote{Encore une fois l'action de $\BF^{\times}$ sur $M$ n'est pas l'action contragrédiente, qui ne définit pas une graduation sur $\BF^+$, mais l'action de l'opposé de $\BF^{\times}$ qui lui est égale par commutativité.} et $\cha_k(M) = \cha_k(M^{*})$. Or, d'après \cite[Chapitre IV, \S5, Corollaire 2]{font}, on a un isomorphisme fonctoriel en $G$, $D(G^D) \cong D(G)^*$. D'où le résultat.
\end{proof}


\section{Formule des caractères et $\BF$-exponentielle}
Dans cette section on démontre la formule des caractères. On commence par définir la $\BF$-exponentielle.
\begin{definition}
On appelle  \emph{$\BF$-exponentielle} l'application multiplicative $\exp_{\BF} \colon \BN[\BF^{+}]\rightarrow \BN[\BF^{\vee}]$ définie pour $f = \sum_{\chi \in \BF^{+}} n_{\chi}[\chi]\in \BN[\BF^{+}]$ par 
$$
\exp_{\BF}(f)=\prod_{\chi \in \BF^{+}}(1+[\chi]+[\chi^2]+\dots +[\chi^{p-1}])^{n_{\chi}}.
$$
\end{definition}
Par \emph{multiplicatif} on entend que $\exp_{\BF}(f+g)=\exp_{\BF}(f)\exp_{\BF}(g)$ pour tout $f,g\in \BN[\BF^{+}]$. Le but est de démontrer le théorème suivant :
\begin{theorem}\label{thm:expcarg}
Soit $G$ un schéma en $\BF$-vectoriels sur un $\W(\BF)$-schéma connexe $S$ tel que $p$ est localement nilpotent sur $S$. Alors
$$
\Cha_S(G) = \exp_{\BF}(\cha_S(\BD(G))).
$$
\end{theorem}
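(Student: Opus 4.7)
Le plan consistera d'abord à utiliser les observations de la sous-section \ref{sssec:redp} pour se ramener au cas où $S=\Spec k$ est le spectre d'un corps parfait contenant $\BF$, car les caractères $\Cha_S(G)$ et $\cha_S(\BD(G))$ sont tous deux invariants par changement de base à un point géométrique. On se placera alors dans l'équivalence entre schémas en $\BF$-vectoriels sur $k$ et $\W(k)$-modules de Dieudonné gradués sur $\BF^+$.

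L'observation centrale sera que la formule est stable par dévissage. Pour toute suite exacte $0\to G'\to G\to G''\to 0$ de schémas en $\BF$-vectoriels, l'exactitude de $\BD$ donne $\cha_k(\BD(G))=\cha_k(\BD(G'))+\cha_k(\BD(G''))$ ; par ailleurs, tout torseur fppf affine sur une base affine étant trivial, on a $A\cong A'\otimes_k A''$ comme $\BF^\vee$-modules gradués, d'où $\Cha_k(G)=\Cha_k(G')\Cha_k(G'')$. La $\BF$-exponentielle étant multiplicative, il suffira donc de vérifier la formule sur les briques de la décomposition canonique d'un schéma en groupes fini commutatif annulé par $p$ sur un corps parfait, à savoir les parties étale, de type multiplicatif, et infinitésimale-infinitésimale.

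Le c\oe{}ur du calcul sera le cas où $V=0$ sur $D(G)$, qui couvre la partie étale, et auquel la partie infinitésimale-infinitésimale se ramène par un second dévissage le long de la filtration $0\subset \ker V \subset \ker V^2 \subset \cdots$ (qui sature $G$ par nilpotence de $V$, avec quotients successifs annulés par $V$). Dans ce cas, on exhibera une présentation explicite : l'algèbre de Hopf $A$ de $G$ sera isomorphe, de manière compatible à la graduation, à un produit tensoriel d'algèbres de la forme $k[x]/(x^p-c)$, où chaque générateur $x$ est homogène d'un poids $\chi\in\BF^+$ correspondant à un élément d'une base homogène de $D(G)$. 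La formule du théorème s'en déduira immédiatement, car $\Cha_k(k[x]/(x^p-c))=1+[\chi]+[\chi^2]+\cdots+[\chi^{p-1}]$ et $\cha_k(D(G))=\sum_i [\chi_i]$.

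Il ne restera alors qu'à traiter la partie de type multiplicatif, ce que l'on fera par dualité de Cartier : $G^D$ étant étale, on appliquera le Lemme \ref{lem:dual} et l'égalité $\Cha_k(G)=\Cha_k(G^D)$ rappelée en \ref{sssub:dual} pour se ramener au cas étale, déjà couvert. Le principal obstacle technique sera, dans le cas $V=0$, de construire cette présentation explicite de $A$ compatible à la graduation $\BF^\vee$ à partir de la seule donnée abstraite du module de Dieudonné gradué sous-jacent.
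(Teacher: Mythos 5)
Votre stratégie globale coïncide avec celle du papier (réduction à un corps parfait via \ref{sssec:redp}, multiplicativité de $\Cha_k$ et additivité de $\cha_k$, calcul explicite pour $V=0$, dévissage pour $V$ nilpotent, dualité de Cartier pour la partie multiplicative), mais le point où se concentre tout le travail — la multiplicativité de $\Cha_k$ — repose chez vous sur un argument faux. Il n'est pas vrai que tout torseur fppf affine sous un schéma en groupes fini et plat au-dessus d'une base affine soit trivial : les extensions $0\to\mu_p\to\mu_{p^2}\to\mu_p\to 0$ ou $0\to\alpha_p\to\alpha_{p^2}\to\alpha_p\to 0$ donnent des torseurs non triviaux, et $k[x]/(x^{p^2})\not\cong k[x,y]/(x^p,y^p)$ comme $k$-algèbres. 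L'isomorphisme $A\cong A'\otimes_k A''$ est donc faux en tant qu'algèbres ; il est vrai en tant qu'espaces vectoriels gradués, mais c'est précisément ce qu'il faut démontrer, et la preuve du papier y consacre toute la sous-section : on montre d'abord que lorsque le quotient $G''$ est connexe, $A$ est un $A''$-module gradué libre admettant une base homogène (via Nakayama sur l'anneau artinien local $A''$, proposition \ref{prop:relat}), d'où $\dim_k A_\chi=\sum_{\mu\mu'=\chi}\dim_k A'_{\mu}\dim_k A''_{\mu'}$ ; le cas d'un quotient étale s'obtient ensuite par dualité de Cartier (qui renverse la suite exacte et rend le quotient connexe), et le cas général par la décomposition $G\cong G^{\circ}\times G^{\text{\'et}}$. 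Sans un argument de ce type, votre dévissage n'est pas fondé.

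Deux remarques secondaires. D'une part, votre présentation de $A$ dans le cas $V=0$ comme produit tensoriel de $k[x]/(x^p-c)$ n'est pas compatible à la graduation dès que $c\neq 0$ : si $x$ est homogène de poids $\chi$, alors $x^p$ est de poids $\chi^p$ et ne peut être égal à un scalaire non nul. La présentation correcte, donnée par l'équivalence de Dieudonné pour les groupes de type additif, est $A=k[x_1,\dots,x_n]/\langle x_i^p-\sum_j a_{ij}x_j\rangle$ avec des relations couplant les poids $\chi_i$ et $\chi_i^p$ ; la base monomiale homogène, et donc le calcul du caractère, restent les mêmes. D'autre part, votre découpage en trois briques (étale, multiplicatif, local-local) est équivalent au découpage du papier ($V$ nilpotent par la filtration $V^iM$, puis $V$ bijectif par dualité), et votre filtration croissante par les $\ker V^i$ fonctionne aussi bien que la filtration décroissante par les images $V^iM$.
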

D'après ce qu'on a expliqué au paragraphe \ref{sssec:redp}, ce théorème est équivalent au théorème suivant :

\begin{theorem}\label{thm:expcar}
Soit $G$ un schéma en $\BF$-vectoriels sur un corps parfait $k$ de caractéristique $p$ tel que $\BF\subset k$. Alors
$$
\Cha_k(G) = \exp_{\BF}(\cha_k(D(G))).
$$
\end{theorem}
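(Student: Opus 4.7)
Le plan est d'exploiter l'\'equivalence de cat\'egories entre sch\'emas en $\BF$-vectoriels sur $k$ et modules de Dieudonn\'e gradu\'es sur $\BF^+$ que l'on vient de rappeler, en suivant les trois \'etapes indiqu\'ees dans l'introduction : calcul explicite dans le cas $V = 0$, d\'evissage pour $V$ nilpotent, et argument de dualit\'e dans le cas g\'en\'eral.

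Je commencerais par \'etablir le comportement multiplicatif des deux membres de la formule. Pour toute suite exacte courte $0 \to G' \to G \to G'' \to 0$ de sch\'emas en $\BF$-vectoriels, l'alg\`ebre de Hopf $A$ de $G$ est libre sur $A''$ avec $A \otimes_{A''} k \cong A'$ ; cette description \'etant compatible \`a la $\BF^\vee$-graduation, on a $\Cha_k(G) = \Cha_k(G') \cdot \Cha_k(G'')$ dans $\BN[\BF^\vee]$. Par l'exactitude de $\BD$ sur les sch\'emas en $\BF$-vectoriels, $\cha_k$ est additif ; puisque $\exp_{\BF}$ transforme les sommes en produits, la formule est stable par extension, ce qui permet de la v\'erifier sur une famille g\'en\'eratrice et, par la suite, d'exploiter les filtrations naturelles sur $D(G)$.

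Pour traiter le cas $V = 0$ sur $D(G)$, je d\'ecomposerais $M = D(G)$ en $M_F \oplus M_0$ o\`u $F$ est un isomorphisme sur $M_F$ et nilpotent sur $M_0$ : c'est une d\'ecomposition de Jordan gradu\'ee, compatible \`a la $\BF^+$-graduation puisque $\chi \mapsto \chi^p$ est une bijection de $\BF^+$. La partie nilpotente se r\'eduit, par la multiplicativit\'e appliqu\'ee \`a la filtration par les $\ker F^n$, au cas atomique o\`u $M = k$ est concentr\'e en un seul degr\'e primitif $\chi$ avec $F = V = 0$, qui correspond \`a $G = \alpha_p$ plac\'e en poids $\chi$, d'alg\`ebre de Hopf $k[x]/(x^p)$ ; on calcule alors directement $\Cha_k(G) = 1 + [\chi] + \cdots + [\chi^{p-1}] = \exp_{\BF}([\chi])$. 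Pour $M_F$, les objets simples sont de type multiplicatif, associ\'es \`a une unique orbite de Frobenius $\{\chi, \chi^p, \dots\}$ dans $\BF^+$ ; leur alg\`ebre de Hopf, de dimension $q$, est construite \`a partir de $\mu_p$ tensoris\'e par $\BF$, et la diagonalisation de l'action de $\BF^\times$ fournit le facteur pr\'edit. On obtient ainsi la formule d\`es que $V = 0$, puis la filtration $0 \subset \ker V \subset \ker V^2 \subset \cdots$ l'\'etend au cas $V$ nilpotent.

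Pour conclure, j'utiliserais la d\'ecomposition connexe-\'etale $G = G^0 \oplus G^{et}$, qui respecte la structure de $\BF$-vectoriel sur le corps parfait $k$. Le module $D(G^0)$ a $V$ nilpotent, donc l'\'etape pr\'ec\'edente s'applique. La partie \'etale $G^{et}$ a $F$ nilpotent sur son module de Dieudonn\'e, mais son dual de Cartier $(G^{et})^D$ a $V$ nilpotent ; la formule y est donc v\'erifi\'ee, puis par le Lemme~\ref{lem:dual} et l'identit\'e $\Cha_k(G) = \Cha_k(G^D)$ rappel\'ee au~\S\ref{sssub:dual}, elle se transporte \`a $G^{et}$. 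La multiplicativit\'e conclut alors. L'obstacle principal devrait \^etre le calcul explicite pour les pi\`eces simples \emph{multiplicatives} Frobenius-cycliques de la deuxi\`eme \'etape, o\`u il faut d\'ecrire soigneusement la d\'ecomposition en poids de $\BF^\times$ sur l'alg\`ebre de Hopf pour y reconna\^{\i}tre l'exponentielle pr\'edite.
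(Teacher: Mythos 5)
Votre architecture g\'en\'erale (multiplicativit\'e, cas $V=0$, d\'evissage, dualit\'e) est bien celle du papier, mais la derni\`ere \'etape contient une erreur qui laisse un cas enti\`erement non trait\'e : la partie de type multiplicatif. Vous affirmez que $D(G^0)$ a $V$ nilpotent parce que $G^0$ est connexe ; c'est faux. Par exemple $\mu_p\otimes_{\BF_p}\BF$ est connexe, mais $V$ est \emph{bijectif} sur son module de Dieudonn\'e (c'est pr\'ecis\'ement le cas que le papier isole en dernier : $V$ isomorphisme, donc $F$ nilpotent, donc $V$ nilpotent sur le dual de Cartier, et on conclut par le lemme~\ref{lem:dual} et l'invariance $\Cha_k(G)=\Cha_k(G^D)$ du~\S\ref{sssub:dual}). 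La d\'ecomposition pertinente n'est pas connexe--\'etale mais la d\'ecomposition de $D(G)$ en partie $V$-nilpotente et partie $V$-bijective ; votre argument n'atteint jamais la seconde. Sym\'etriquement, votre description de la partie \'etale est invers\'ee : un groupe \'etale tu\'e par $p$ a $V=0$ et $F$ bijectif sur $D(G)$ (il est donc \og de type additif\fg{} au sens du papier et d\'ej\`a couvert par le cas $V=0$, sans dualit\'e), tandis que les pi\`eces que vous appelez \og de type multiplicatif, construites \`a partir de $\mu_p$\fg{} dans votre deuxi\`eme \'etape sont en r\'ealit\'e les groupes \'etales constants du type $\underline{\BF}$ ; le dual $(G^{\text{\'et}})^D$ a $V$ bijectif, pas nilpotent.

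Deux remarques secondaires. D'une part, le papier traite le cas $V=0$ d'un seul tenant, sans s\'eparer $F$ nilpotent de $F$ bijectif : la proposition~\ref{prop:dieuadd} donne directement $A=k[x_1,\dots,x_n]/\langle x_i^p-\sum_j a_{i,j}x_j\rangle$ avec les $x_i$ homog\`enes, et la base monomiale homog\`ene livre la formule d'un coup ; votre d\'ecomposition de Jordan et le calcul \og par diagonalisation\fg{} sur la partie $F$-bijective sont donc superflus et, en l'\'etat, trop impr\'ecis pour \^etre v\'erifiables. D'autre part, la multiplicativit\'e de $\Cha_k$ n'est pas imm\'ediate : l'existence d'une base \emph{homog\`ene} de $A$ sur $A''$ (proposition~\ref{prop:relat}) repose sur Nakayama et exige que le quotient soit connexe (alg\`ebre locale artinienne) ; le cas g\'en\'eral s'obtient ensuite par la d\'ecomposition connexe--\'etale et la dualit\'e (proposition~\ref{prop:multcarco} et la discussion qui suit). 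Votre proposition \'enonce la multiplicativit\'e sans cette pr\'ecaution.
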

Dans toute cette section on suppose que $G$ est un schéma en $\BF$-vectoriels sur un corps parfait $k$ de caractéristique $p$ tel que $\BF\subset k$. On notera toujours $\sigma \colon k \rightarrow k$ l'automorphisme de Frobenius.
\goodbreak
\subsection{Multiplicativité du caractère}
\subsubsection{}
Le caractère des modules de Dieudonné gradués sur $\BF^+$ est additif, plus précisément, on a directement :
\begin{lemma}
Soit
$$
0\rightarrow M' \rightarrow M \rightarrow M'' \rightarrow 0
$$
une suite exacte de $\W(k)$-modules de Dieudonné gradués sur $\BF^+$, en particulier les morphismes sont gradués. Alors
$$
\cha_k(M) = \cha_k(M')+\cha_k(M'').
$$
\end{lemma}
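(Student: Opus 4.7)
The plan is essentially to unpack the definitions: the character is built from dimensions of isotypical components, and these dimensions are additive on short exact sequences of vector spaces. Since all morphisms are assumed graded, applying the projector $p_\chi$ is an exact functor on the category of graded Dieudonné modules; thus for each $\chi \in \BF^+$ the sequence restricts to an exact sequence
\begin{equation*}
0 \to M'_\chi \to M_\chi \to M''_\chi \to 0
\end{equation*}
and exactness at each term follows immediately from the fact that the grading is a direct-sum decomposition.

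Next I would observe that each $M_\chi$ is naturally a $k$-vector space: by the definition of graded module of Dieudonné recalled just above, $FV = VF = 0$, and since $p$ acts on $M$ via $FV$, we have $pM = 0$, so $M$ (and each graded piece $M_\chi$) is a $k$-vector space of finite dimension. The standard additivity of dimension on short exact sequences of $k$-vector spaces then yields
\begin{equation*}
\dim_k(M_\chi) = \dim_k(M'_\chi) + \dim_k(M''_\chi)
\end{equation*}
for every $\chi \in \BF^+$.

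Finally I would sum these identities over $\chi \in \BF^+$ and bracket with $[\chi]$ to obtain the equality $\cha_k(M) = \cha_k(M') + \cha_k(M'')$ in $\BN[\BF^+]$. There is no genuine obstacle here; the only point that deserves care is the remark that, because $FV = 0$, the ambient $W(k)$-module structure is actually a $k$-vector space structure, which is what legitimises the use of $\dim_k$ in the definition of $\cha_k$.
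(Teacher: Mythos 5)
Your proof is correct and matches what the paper does: the paper states this lemma with « on a directement » and omits the argument entirely as immediate, and your unpacking (the graded exact sequence splits into exact sequences of isotypic components, which are finite-dimensional $k$-vector spaces since these modules are $p$-torsion, so $\dim_k$ is additive) is exactly the intended justification. No issues.
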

\qed

Ainsi, d'après le théorème \ref{thm:expcar}, on devrait obtenir que $\Cha_k$ est multiplicatif. La démonstration du théorème \ref{thm:expcar} repose sur ce fait ; on montre donc dans cette partie la proposition suivante :
\begin{proposition}\label{prop:multcar}
Soit
$$
0\rightarrow G'\rightarrow G \rightarrow G''\rightarrow 0
$$
une suite exact de schémas en $\BF$-vectoriels\footnote{Notez que les morphismes de schémas en $\BF$-vectoriels sont des morphismes de schémas en groupes qui respectent l'action de $\BF$.}. Alors
$$
\Cha_k(G) = \Cha_k(G')\Cha_k(G'').
$$
\end{proposition}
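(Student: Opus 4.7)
Le plan est de traduire l'énoncé au niveau des algèbres de Hopf via la Proposition \ref{prop:eqaff} : en notant $A$, $A''$, $A'$ les algèbres de Hopf associées à $G$, $G''$, $G'$, je montrerais l'énoncé plus fort que $A \cong A'' \otimes_k A'$ comme représentations de $\BF^{\times}$. Comme $|\BF^{\times}| = q-1$ est inversible dans $k$, la catégorie des $\BF^{\times}$-représentations sur $k$ est semi-simple par Maschke, et en prenant les caractères d'un tel isomorphisme on obtient $\Cha_k(G) = \Cha_k(G')\Cha_k(G'')$ dans $\BN[\BF^{\vee}]$, puisque le caractère d'un produit tensoriel de représentations est le produit des caractères dans $\BN[\BF^{\vee}]$.

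Je commencerais par me ramener aux cas où $G$, $G'$ et $G''$ sont tous connexes ou tous étales, via la décomposition connexe-étale. Sur le corps parfait $k$, tout schéma en groupes commutatif fini se décompose canoniquement et fonctoriellement en $G = G^0 \times G^{\text{\'et}}$ ; cette décomposition est compatible avec la structure de schémas en $\BF$-vectoriels, et par fonctorialité la suite exacte se décompose en suites exactes des parties connexes et des parties étales. Puisque les caractères sont multiplicatifs sous produits directs de schémas en groupes (l'algèbre de Hopf d'un produit étant le produit tensoriel), il suffit d'établir la multiplicativité séparément dans chacun des deux cas.

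Dans le cas étale, les caractères ne sont pas affectés par l'extension des scalaires à $\bar k$ ; sur $\bar k$ un schéma en $\BF$-vectoriels étale est simplement un $\BF$-espace vectoriel fini. Toute suite exacte courte de tels espaces étant scindée, on a $V \cong V' \oplus V''$ comme $\BF$-espaces vectoriels, et les algèbres de Hopf constantes associées vérifient $\bar k^V \cong \bar k^{V'} \otimes_{\bar k} \bar k^{V''}$ de façon $\BF^{\times}$-équivariante, d'où la multiplicativité cherchée.

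Dans le cas connexe, $A''$ est une $k$-algèbre artinienne locale d'idéal maximal $\sI_{A''}$ égal à l'idéal d'augmentation. La semi-simplicité des représentations de $\BF^{\times}$ fournit une section $k$-linéaire $\BF^{\times}$-équivariante $s \colon A' \to A$ de la surjection $\pi \colon A \to A'$ ; je définirais alors $\phi \colon A'' \otimes_k A' \to A$ par $\phi(a'' \otimes a') = a'' \cdot s(a')$, qui est $A''$-linéaire et $\BF^{\times}$-équivariante. Sa réduction modulo $\sI_{A''}$ est l'identité de $A' = A/\sI_{A''}A$. Les deux côtés de $\phi$ étant des $A''$-modules libres de rang $\dim_k A'$ ($A$ étant fidèlement plat de rang constant sur $A''$ puisque $G \to G''$ est un $G'$-torseur), le lemme de Nakayama, applicable car $A''$ est local, implique que $\phi$ est un isomorphisme. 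La difficulté principale est précisément la nécessité de la réduction connexe-étale : Nakayama échoue lorsque $A''$ possède plusieurs composantes (autrement dit lorsque $G''$ n'est pas connexe), et le cas étale doit être traité comme un ingrédient indépendant via le scindage des $\BF$-espaces vectoriels.
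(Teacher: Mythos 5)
Votre d\'emonstration est correcte et suit pour l'essentiel la m\^eme strat\'egie que celle de l'article : d\'evissage connexe-\'etale sur le corps parfait $k$, puis argument de type Nakayama au-dessus de l'alg\`ebre artinienne locale du quotient connexe. Dans le cas connexe, l'isomorphisme \'equivariant $\phi \colon A''\otimes_k A' \to A$ que vous construisez \`a partir d'une section $\BF^{\times}$-\'equivariante (Maschke, $q-1$ \'etant inversible dans $k$) est une reformulation de la proposition \ref{prop:relat} et du corollaire \ref{cor:relat} de l'article, qui produisent une base homog\`ene de $A$ sur $A''$ relevant une base homog\`ene de $A'=A\otimes_{A''}k$ ; les deux arguments reposent sur le m\^eme couple Nakayama + comptage de dimensions (Nakayama ne donne que la surjectivit\'e de $\phi$, l'injectivit\'e venant de l'\'egalit\'e des rangs, ce que vous signalez bien) et donnent la m\^eme d\'ecomposition de $A$ en somme de $A''(\chi)$. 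La seule divergence r\'eelle est le traitement du cas \'etale : l'article le ram\`ene au cas connexe par dualit\'e de Cartier (le dual d'un sch\'ema en groupes \'etale annul\'e par $p$ est connexe, et $\Ch_k(G)=\Ch_k(G^D)$ d'apr\`es \ref{sssub:dual}), tandis que vous passez \`a $\bar k$ et scindez la suite exacte de $\BF$-espaces vectoriels sous-jacente. Votre variante est plus \'el\'ementaire et \'evite la dualit\'e ; elle utilise en revanche l'invariance du caract\`ere par extension des scalaires (d\'ej\`a \'etablie dans l'article) et la constance des sch\'emas en groupes finis \'etales sur un corps alg\'ebriquement clos. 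Les deux routes sont valables et de port\'ee \'equivalente.
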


\subsubsection{}
Soit $G$ un schéma en $\BF$-vectoriels sur $k$ et $A$ son algèbre de Hopf. On suppose que $G$ est connexe, c'est-à-dire que $A$ est une $k$-algèbre artinienne locale. Rappelons qu'on a une graduation
$$
A=\bigoplus_{\chi\in \BF^{\vee}}A_{\chi}.
$$
\begin{definition}
Soit $M$ un $A$-module. Supposons que $M$ est gradué sur $\BF^{\vee}$. Alors on dira que $M$ est un $A$-\emph{module gradué} si pour tout $\chi,\chi'\in \BF^{\vee}$ on a $A_{\chi}\cdot M_{\chi'}\subset M_{\chi\chi'}$. On dira de plus qu'un élément de $M$ est \emph{homogène} de type $\chi \in \BF^{\vee}$ s'il appartient à $M_{\chi}$.
\end{definition}
\begin{example}
Pour $\chi \in \BF^{\vee}$, on note $A(\chi)$ le $A$-module libre de rang $1$ muni d'une graduation sur $\BF^{\vee}$ telle que pour tout $\mu \in \BF^{\vee}$ on ait $A(\chi)_{\mu} = A_{\chi^{-1} \mu}$. C'est un $A$-module gradué. On peut définir de même $M(\chi)$ pour tout $A$-module gradué $M$. En particulier, on a les $k(\chi)$ où $k$ est muni de la graduation triviale.
\end{example}
\begin{proposition}\label{prop:relat}
Soit $M$ un $A$-module gradué et libre de rang $d$ sur $A$. Alors on a une décomposition
$$
M=\bigoplus_{\chi \in \BF^{\vee}} A(\chi)^{d_{\chi}},
$$
telle que $\sum_{\chi\in\BF^{\vee}}d_{\chi}= d$.
\end{proposition}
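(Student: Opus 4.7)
Le plan est de raisonner comme pour la classification des modules libres sur un anneau local, en tenant compte partout de la graduation. Je commencerais par noter que, comme $G$ est connexe, $A$ est une $k$-alg�bre artinienne locale dont l'id�al maximal co�ncide avec l'id�al d'augmentation $\sI$. Comme le morphisme structural $k\to A$ et la co-unit� $A\to k$ sont tous deux gradu�s (avec $k$ concentr� en $\chi_0$), l'id�al $\sI$ est un id�al gradu�, et le quotient $A/\sI = k$ est un corps (gradu�) concentr� en $\chi_0$.

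Ensuite je formerais $\ov M \coloneqq M/\sI M$. C'est un $k$-espace vectoriel gradu� sur $\BF^\vee$, et puisque $M$ est $A$-libre de rang $d$ il est de dimension totale $d$ sur $k$. On a donc une d�composition
$$
\ov M = \bigoplus_{\chi\in \BF^{\vee}} \ov M_\chi, \qquad \sum_{\chi\in \BF^\vee}\dim_k \ov M_\chi = d,
$$
et on pose $d_\chi \coloneqq \dim_k \ov M_\chi$. Pour chaque $\chi$, je choisirais des �l�ments homog�nes $e_{\chi,1},\dots, e_{\chi,d_\chi}$ de $M_\chi$ dont les images dans $\ov M_\chi$ forment une $k$-base. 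La famille de tous ces $e_{\chi,i}$, qui est compos�e d'exactement $d$ �l�ments, engendre $M$ comme $A$-module en vertu du lemme de Nakayama appliqu� � l'anneau local $(A, \sI)$.

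J'introduirais alors le morphisme $\varphi\colon \bigoplus_{\chi\in\BF^\vee}A(\chi)^{d_\chi}\to M$ qui envoie les g�n�rateurs canoniques sur les $e_{\chi,i}$ : par construction il est gradu�, et par ce qui pr�c�de il est surjectif. La source et le but sont deux $A$-modules libres de m�me rang $d$. Pour conclure que $\varphi$ est un isomorphisme, je remarquerais que la matrice de $\varphi$ dans des bases libres devient inversible modulo $\sI$ (car $\varphi$ induit un isomorphisme sur $\ov M$), donc son d�terminant est inversible dans l'anneau local $A$, ce qui entra�ne $\ker \varphi=0$.

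Le point l�g�rement d�licat est la v�rification que $\sI$ est un id�al gradu� de $A$ (i.e.\ que l'action de $\BF^\times$ pr�serve la section unit� de $G$), ce qui justifie l'application du lemme de Nakayama gradu�. Une fois ce point pos�, tout le reste est la traduction gradu�e du th�or�me de structure des modules libres sur un anneau local.
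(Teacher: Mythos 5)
Your proof is correct and follows essentially the same route as the paper: reduce modulo the augmentation ideal, lift a homogeneous basis of $M/\sI M$, apply Nakayama, and then show the resulting graded surjection from a free module of rank $d$ is an isomorphism. The only (harmless) difference is the final step, where you invert the determinant over the local ring $A$ while the paper simply counts $k$-dimensions of source and target; and the gradedness of $\sI$ that you flag as the delicate point is indeed settled in the paper beforehand, via the observation that $\BF^{\times}$ acts trivially on $\sO_S$ so the co-unit is graded.
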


\begin{proof}
Il suffit de montrer qu'il existe une $A$-base de $M$, constituée d'éléments homogènes. On considère le produit tensoriel $M_k = M\otimes_{A}k$ pour la co-unité $A\rightarrow k$ ; c'est un $k$-module gradué sur $\BF^{\vee}$,
$$
M_k=\bigoplus_{\chi \in \BF^{\vee}} k(\chi)^{d_{\chi}}.
$$
De plus, $M\rightarrow M_k$ est surjectif et respecte la graduation car $k\subset A_{\chi_0}$, c'est-à-dire que $M_{\chi}\rightarrow k(\chi)^{d_{\chi}}\subset M_k$ est surjectif pour tout $\chi \in \BF^{\vee}$. Ainsi, pour tout $\chi \in \BF^{\vee}$ on peut fixer un ensemble à $d_{\chi}$ éléments homogènes $\fkB_{\chi}\subset M_{\chi}$ tels que leurs images dans $M_k$ forment une $k$-base de $k(\chi)^{d_{\chi}}$. On pose $\fkB= \cup \fkB_{\chi}$, c'est un ensemble à $d$ éléments que nous notons $v_1,\dots, v_d$. Par le lemme de Nakayama, $\fkB$ est une famille génératrice de $M$ en tant que $A$-module. On a donc une surjection de $A$-modules
\begin{eqnarray*}\label{eq:decomp}
 A^d & \rightarrow & M \\
(x_1,\dots, x_d)&\mapsto &\sum_{i=1}^dx_i\cdot v_i.
\end{eqnarray*}
Or, c'est une surjection entre $k$-espaces vectoriels de même dimension et donc c'est un isomorphisme de $A$-modules. Ce qui montre que $\fkB$ est une $A$-base homogène de $M$.
\end{proof}

\subsubsection{}
On déduit de la proposition précédente une formule pour les dimensions des composantes isotypiques.
\begin{corollary}\label{cor:relat}
On conserve les notations et hypothèses de la proposition \ref{prop:relat}. On a alors 
$$
\dim_k(M_{\chi})=\sum_{\mu \mu'=\chi}d_{\mu}\dim_k(A_{\mu'}),
$$
où la somme porte sur les $\mu,\mu' \in \BF^{\vee}$ tels que $\mu\mu'=\chi$.
\end{corollary}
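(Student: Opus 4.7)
The plan is to unwind the decomposition furnished by Proposition \ref{prop:relat} and read off the $\chi$-isotypic component degree by degree. Since $M \cong \bigoplus_{\mu \in \BF^{\vee}} A(\mu)^{d_{\mu}}$ as graded $A$-modules, and direct sums of graded modules decompose componentwise, I would first write
\begin{equation*}
M_{\chi} = \bigoplus_{\mu \in \BF^{\vee}} \bigl(A(\mu)^{d_{\mu}}\bigr)_{\chi} = \bigoplus_{\mu \in \BF^{\vee}} \bigl(A(\mu)_{\chi}\bigr)^{d_{\mu}}.
\end{equation*}

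Next I would invoke the definition of the shifted module $A(\mu)$, according to which $A(\mu)_{\chi} = A_{\mu^{-1}\chi}$. This gives
\begin{equation*}
\dim_k(M_{\chi}) = \sum_{\mu \in \BF^{\vee}} d_{\mu} \dim_k(A_{\mu^{-1}\chi}).
\end{equation*}
It then only remains to reindex by setting $\mu' = \mu^{-1}\chi$; the condition $\mu\mu' = \chi$ is equivalent to this substitution (via the group structure of $\BF^{\vee}$), and the map $(\mu,\mu')\mapsto \mu\mu'$ is a bijection from $\{(\mu,\mu')\in(\BF^{\vee})^2 \mid \mu\mu'=\chi\}$ onto the indexing set on the right-hand side. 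We obtain the claimed formula
\begin{equation*}
\dim_k(M_{\chi}) = \sum_{\mu\mu'=\chi} d_{\mu} \dim_k(A_{\mu'}).
\end{equation*}

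There is no real obstacle here: the statement is essentially a bookkeeping identity expressing that the Hilbert series of a free graded module factors as the Hilbert series of $A$ times a Laurent-type polynomial in $\BN[\BF^{\vee}]$ whose coefficients are the multiplicities $d_{\mu}$. The only point worth stating carefully is the identification $A(\mu)_{\chi}=A_{\mu^{-1}\chi}$, which is precisely the convention fixed in the example preceding Proposition \ref{prop:relat}.
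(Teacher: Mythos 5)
Your proof is correct and coincides with what the paper intends: the corollary is stated with no written proof precisely because it follows by unwinding the decomposition $M\cong\bigoplus_{\mu}A(\mu)^{d_{\mu}}$ of Proposition \ref{prop:relat} together with the convention $A(\mu)_{\chi}=A_{\mu^{-1}\chi}$, exactly as you do. The reindexing $\mu'=\mu^{-1}\chi$ is handled correctly, so there is nothing to add.
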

\qed

Soit $G_1$, $G_2$ deux schémas en $\BF$-vectoriels sur $k$ tels que $G_1$ soit connexe. Soit $A_1$, $A_2$ leurs algèbres de Hopf respectives. Supposons que l'on a une une surjection $G_2\rightarrow G_1\rightarrow 0$, alors d'après \cite[Exposé VII$_B$ 2.4]{sga3} ce morphisme est plat. Ainsi $G_2$ est localement libre sur $G_1$ car $G_1$ est noetherien. Or, $A_1$ est Artinien donc $A_2$ est libre sur $A_1$ et on peut appliquer la proposition précédente, qui donne une décomposition 
\begin{equation}\label{eq:decisoc}
A_2=\bigoplus_{\chi \in \BF^{\vee}} A_1(\chi)^{d_{\chi}}.
\end{equation}
\begin{proposition}\label{prop:multcarco}
Soit $0\rightarrow G_1\rightarrow G_2\rightarrow G_3\rightarrow 0$ une suite exacte de schémas en $\BF$-vectoriels.  Supposons que $G_3$ soit connexe. Alors
$$
\Cha_k(G_2)=\Cha_k(G_1)\Cha_{k}(G_3).
$$
\end{proposition}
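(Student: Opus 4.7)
Le plan est d'exploiter directement la d�composition gradu�e (\ref{eq:decisoc}) �tablie dans le paragraphe qui pr�c�de l'�nonc�. Dans le contexte de cette proposition, le r�le du sch�ma connexe qui re�oit la surjection est jou� par $G_3$ : son alg�bre de Hopf $A_3$ est locale artinienne, le morphisme $G_2 \to G_3$ est $\BF$-�quivariant, donc $A_3\hookrightarrow A_2$ est un morphisme d'alg�bres de Hopf gradu�es auquel s'applique la Proposition \ref{prop:relat}. J'obtiendrais ainsi des entiers $(d_\chi)_{\chi \in \BF^{\vee}}$ et un isomorphisme gradu�
$$
A_2 \cong \bigoplus_{\chi \in \BF^{\vee}} A_3(\chi)^{d_{\chi}}.
$$

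� partir de l�, je calculerais $\Cha_k(G_2)$ gr�ce au Corollaire \ref{cor:relat}, ce qui donne
$$
\Cha_k(G_2) = \sum_{\chi \in \BF^{\vee}}\Bigl(\sum_{\mu \mu' = \chi} d_{\mu}\dim_k\bigl((A_3)_{\mu'}\bigr)\Bigr)[\chi] = \Bigl(\sum_{\mu \in \BF^{\vee}} d_{\mu}[\mu]\Bigr)\cdot \Cha_k(G_3)
$$
dans l'anneau $\BN[\BF^{\vee}]$. Il me resterait alors � identifier le facteur $\sum_{\mu} d_{\mu}[\mu]$ avec $\Cha_k(G_1)$. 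Pour cela, j'utiliserais que $G_1 = G_2\times_{G_3}\Spec(k)$ est le noyau du morphisme $G_2\to G_3$, d'o� $A_1 = A_2 \otimes_{A_3} k$ via la co-unit� $A_3 \to k$. Cette co-unit� �tant $\BF$-�quivariante, elle est gradu�e avec $k$ concentr� en degr� trivial $\chi_0$, et l'on v�rifie sans peine que $A_3(\chi)\otimes_{A_3} k \cong k(\chi)$. La d�composition ci-dessus livre ainsi un isomorphisme gradu� $A_1 \cong \bigoplus_{\chi} k(\chi)^{d_{\chi}}$, d'o� $\dim_k((A_1)_{\chi}) = d_\chi$ et $\Cha_k(G_1) = \sum_\chi d_\chi [\chi]$, ce qui conclut.

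L'�tape principale est la premi�re : tout d�coule de la d�composition gradu�e (\ref{eq:decisoc}), elle-m�me cons�quence du lemme de Nakayama homog�ne employ� dans la preuve de la Proposition \ref{prop:relat}. C'est l� qu'intervient de mani�re essentielle la connexit� de $G_3$, sans laquelle $A_3$ ne serait plus locale artinienne et l'argument de Nakayama tomberait. Le reste du raisonnement n'est que de la comptabilit� sur les graduations et l'utilisation de la structure multiplicative de $\BN[\BF^{\vee}]$.
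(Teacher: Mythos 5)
Votre preuve est correcte et suit essentiellement le m�me chemin que celle du texte : d�composition $A_2\cong\bigoplus_{\chi}A_3(\chi)^{d_\chi}$ issue de (\ref{eq:decisoc}) (libert� de $A_2$ sur $A_3$ via la platitude de $G_2\to G_3$ et le caract�re artinien local de $A_3$), calcul de $\dim_k(A_{2,\chi})$ par le corollaire \ref{cor:relat}, puis identification des $d_\chi$ avec $\dim_k(A_{1,\chi})$ au moyen de $G_1=G_2\times_{G_3}\Spec k$. Votre r�daction explicite m�me un peu plus proprement que le texte l'�tape $A_3(\chi)\otimes_{A_3}k\cong k(\chi)$ qui donne $\Cha_k(G_1)=\sum_\chi d_\chi[\chi]$.
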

\begin{proof}
Soit $A_i$ la $k$-algèbre de Hopf associée à $G_i$, pour  $i\in\{1,2,3\}$. D'après ce qui précède, on a la décomposition (\ref{eq:decisoc}). Or $G_1 = G_2 \times_{G_3}\Spec k$, pour le morphisme unité $\Spec k \rightarrow G_3$, donc
$$
A_1 = \bigoplus_{\chi \in \BF^{\vee}} k(\chi)^{d_{\chi}},
$$
en particulier $d_{\mu} = \dim_k(A_{3,\mu})$ pour tout $\mu \in \BF^{\vee}$. Par le corollaire \ref{cor:relat} on a pour tout $\chi \in \BF^{\vee}$ la relation
$$
\dim_k(A_{2,\chi})=\sum_{\mu \mu'=\chi}\dim_k(A_{3,\mu})\dim_k(A_{1,\mu'}),
$$
ce qui prouve la proposition.
\end{proof}
\subsubsection{}
On est maintenant en mesure de conclure l'argument pour montrer la proposition \ref{prop:multcar}, i.e. d'enlever dans la proposition précédente l'hypothèse que $G_3$ soit connexe. Soit 
$$
0\rightarrow G'\rightarrow G \rightarrow G''\rightarrow 0
$$ 
une suite exacte de schémas en $\BF$-vectoriels étales sur $k$. Comme la dualité de Cartier est exacte pour les schémas en groupes finis sur $k$ et comme le dual d'un schéma en groupes fini étale est connexe, on obtient une suite exacte de schémas en groupes connexes
$$
0\rightarrow G''^D\rightarrow G^D \rightarrow G'^D\rightarrow 0.
$$ 
Donc, d'après la proposition \ref{prop:multcarco} et le lemme \ref{lem:dual}, on obtient
$$
\Ch_k(G) = \Ch_k(G^D) = \Ch_k(G''^D) \Ch_k(G'^D)=\Ch_k(G'') \Ch_k(G').
$$
Ainsi, le caractère $\Ch_k$ est multiplicatif sur les schémas en $\BF$-vectoriels étales. 

Soit $G$ un schéma en $\BF$-vectoriels. Alors, en tant que schéma en groupes fini, comme $k$ est parfait, on a $G \cong G^{\circ}\times_k G^{\text{ét}}$ où $G^{\circ}$ est connexe et $G^{\text{ét}}$ est étale. Or, comme cette décomposition est fonctorielle, $G^{\circ}$ et $G^{\text{ét}}$ sont naturellement des schémas en $\BF$-vectoriels. La décomposition nous donne une surjection non canonique $G\rightarrow G^{\circ}$ et on déduit que
$$
\Ch_k(G) = \Ch_k(G^{\circ})\Ch_k(G^{\text{ét}}).
$$
Comme $\Ch_k$ est multiplicatif sur les schémas en $\BF$-vectoriels étales et sur les schémas en $\BF$-vectoriels connexes, on en déduit que $\Ch_k$ est multiplicatif, ce qui termine la démonstration de la proposition \ref{prop:multcar}. \qed

\subsection{Formule des caractères}
Dans cette partie on démontre le théorème \ref{thm:expcar}. On commencera par le démontrer dans le cas où $G$ est de type additif, puis par un dévissage et un argument de dualité, on le montrera en toute généralité.
\subsubsection{}
On commence par rappeler la classification des schémas en groupes finis sur $k$ de type additif (i.e. annulés par $V$). Soit $M$ un $k$-espace vectoriel de dimension finie. On peut lui associer un schéma en groupes sur $k$, annulé par $V$, en posant pour tout schéma affine $U=\Spec B$ sur $k$,
$$
M\otimes \BG_a (U) = M\otimes_k B,
$$
où la structure de groupe est déterminée par le groupe additif sous-jacent à l'anneau $B$. Notons que le Frobenius $F$ sur $\BG_a$ induit un Frobenius, toujours noté $F$ de $M\otimes \BG_a$. Si, de plus, $M$ est muni d'un opérateur $\sigma$-linéaire $F_M\colon M \rightarrow M$, alors $M\otimes \BG_a$ est muni d'un second Frobenius que l'on note $F_M$. Le foncteur des modules de Dieudonné sur $\W(k)$ annulés par $V$, dans les schémas en groupes finis de type additif, défini par $M\rightsquigarrow \ker(F-F_M)$ est alors une équivalence de catégories abéliennes (cf. \cite[IV, \S 3, 6.6-6.7]{dega}). Comme ces constructions sont fonctorielles en $M$, si $M$ est gradué sur $\BF^+$, alors $\ker(F-F_M)$ est naturellement un schéma en $\BF$-vectoriels sur $k$, puisque les Frobenius introduits respectent la graduation. On en déduit la proposition suivante :
\begin{proposition}\label{prop:dieuadd}
L'équivalence de catégories abéliennes
$$
\left \{\begin{array}c
G \text{ schéma en $\BF$-vectoriels sur k,}\\
\text{ de type additif} 
\end{array}\right \}\xleftrightarrow {} \left \{\begin{array}c
											\text{ $\W(k)$-modules de Dieudonné gradués}\\
											\text{annulés par $V$} 
											\end{array}\right \}
$$
définie par $G\rightsquigarrow D(G)$ a pour quasi-inverse le foncteur
$$
M\rightsquigarrow \ker\left [(F-F_M)\colon M\otimes \BG_a \rightarrow M\otimes \BG_a\right ].
$$
\end{proposition}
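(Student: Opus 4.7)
Le plan consiste à déduire l'énoncé de l'équivalence classique de Demazure-Gabriel rappelée dans le texte, en munissant toutes les constructions de la structure $\BF^\times$-équivariante. D'une part, l'équivalence classique donne déjà que $G\mapsto D(G)$ et $M\mapsto \ker(F-F_M)$ sont mutuellement quasi-inverses entre schémas en groupes finis de type additif sur $k$ et $\W(k)$-modules de Dieudonné annulés par $V$. D'autre part, la proposition du paragraphe \ref{sssec:bla} assure que $D$ envoie un schéma en $\BF$-vectoriels sur un module de Dieudonné gradué sur $\BF^+$. Il reste donc à vérifier que la construction $M\mapsto \ker(F-F_M)$ produit un schéma en $\BF$-vectoriels dès que $M$ est gradué sur $\BF^+$.

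Pour cela, je procéderais en deux temps. D'abord, la construction étant fonctorielle en $M$, l'action de $\BF^\times$ sur $M$ induite par la graduation (soit $\lambda\cdot m = \chi(\lambda)m$ pour $m\in M_\chi$) se transporte en une action de $\BF^\times$ sur $\ker(F-F_M)$. Ensuite, j'utiliserais la primitivité des caractères $\chi\in\BF^+$ pour conclure que cette action de $\BF^\times$ s'étend en une structure d'$\BF$-vectoriel : l'additivité $\chi(\lambda+\lambda')=\chi(\lambda)+\chi(\lambda')$ traduit, sur chaque composante graduée, la compatibilité $(\lambda+\lambda')\cdot = \lambda\cdot + \lambda'\cdot$, ce qui fournit précisément la donnée requise par la Proposition \ref{prop:eqaff} pour une structure de schéma en $\BF$-vectoriels.

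L'écueil principal sera de justifier rigoureusement cette fonctorialité, c'est-à-dire l'$\BF^\times$-équivariance de la construction du noyau, compte tenu de la nature semi-linéaire des opérateurs : $F_M$ est $\sigma$-linéaire sur $M$ et envoie $M_\chi$ dans $M_{\chi^p}$ (qui appartient encore à $\BF^+$ par la Remarque \ref{rem:primfond}), tandis que $F$ agit par élévation à la puissance $p$ sur le facteur $\BG_a$. La compatibilité repose sur l'identité $\chi(\lambda)^p=\chi^p(\lambda)=\sigma(\chi(\lambda))$ pour $\lambda\in\BF^\times$, qui assure que les deux torsions s'effacent dans la différence $F-F_M$. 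Enfin, les isomorphismes naturels $G\cong \ker(F-F_{D(G)})$ et $M\cong D(\ker(F-F_M))$ fournis par l'équivalence classique sont $\BF^\times$-équivariants par fonctorialité, donc des isomorphismes dans les catégories enrichies, ce qui conclut.
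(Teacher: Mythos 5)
Votre démarche est correcte et coïncide avec celle du papier, qui se contente d'invoquer l'équivalence de Demazure--Gabriel $M\rightsquigarrow\ker(F-F_M)$ et la fonctorialité de cette construction, en notant que les deux Frobenius respectent la graduation. Vous explicitez en outre deux points que le papier laisse implicites --- l'extension de l'action de $\BF^{\times}$ en une action vectorielle de $\BF$ via l'additivité des caractères primitifs, et la compatibilité des torsions semi-linéaires par l'identité $\chi(\lambda)^p=\chi^p(\lambda)=\sigma(\chi(\lambda))$ --- ce qui ne fait que renforcer l'argument.
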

\qed
\subsubsection{}
En utilisant la proposition précédente, on démontre dans ce paragraphe le théorème \ref{thm:expcar} pour $G$ de type additif. 

Soit $G$ un schéma en $\BF$-vectoriels sur $k$ de type additif, $A$ sa $k$-algèbre de Hopf et $D(G)$ son module de Dieudonné. Soit $x_1,\dots, x_n$ une base homogène de $D(G)$ et $\chi_i \in \BF^+$ le caractère tel que $x_i\in D(G)_{\chi_i}$. Soit $I= \{1,2,\dots,n\}$, on a 
$$
\cha_k(D(G))= \sum_{i\in I}[\chi_i].
$$
Il suffit donc de montrer que
$$
\Cha_k(G) = \prod_{i\in I}\left (\sum_{k\in\{0,1,\dots, p-1\}}[\chi_i^k]\right ).
$$
 Soit $V = \{0,\dots, p-1\}^I$, l'ensemble des fonctions de $I$ à valeurs dans $\{0,\dots, p-1\}$. Alors, en échangeant la somme et le produit dans la relation précédente, on se ramène à montrer que
 \begin{equation}\label{hyp:rec2}
 \Cha_k(G) = \sum_{k\in V}\left [ \prod_{i\in I}\chi^{k(i)}_i\right ].
 \end{equation} 
D'après la proposition \ref{prop:dieuadd}, il existe $a_{i,j}\in k$, pour tout $i,j\in I= \{1,2,\dots,n\}$, tel que 
$$
A=\frac{k[x_1,\dots,x_n]} {\langle P_i\rangle_{i\in I}}, \text{ où pour tout }i\in I, \  P_i = x_i^p-\sum_{j\in I} a_{i,j}x_j.
$$
Donc la famille $\{x_1^{k(1)}\dots x_n^{k(n)}\}_{k\in V}$ est une base homogène de $A$ sur $k$. En particulier, pour $k\in V$ on a $x_1^{k(1)}\dots x_n^{k(n)}\in A_{\chi_1^{k(1)}\dots \chi_n^{k(n)}}$. Ceci démontre précisément (\ref{hyp:rec2}) et finit la démonstration du théorème \ref{thm:expcar} pour $G$ de type additif.
\qed
\begin{remark}
Comme les schémas en groupes finis étales sont de type additif, ceci montre en particulier le théorème pour $G$ étale.
\end{remark}
\subsubsection{}
Dans ce paragraphe on conclut la démonstration du théorème \ref{thm:expcar}. On commence par le cas où $V$ est nilpotent et on finit par un argument de dualité.

Soit $G$ un schéma en $\BF$-vectoriels sur $k$ tel que $V$ est nilpotent. Soit $M=D(G)$ son $\W(k)$-module de Dieudonné gradué. Si on pose pour tout entier $i\geqslant 0$, $M_i = V^iM$, on obtient une filtration décroissante de $M$ dont les quotients successifs sont annulés par $V$. Comme $V$ respecte la graduation, c'est une filtration décroissante graduée sur $\BF^+$ et elle définit donc une filtration décroissante $\{G_i\}_{i\geqslant 0}$ de sous-schémas fermés en $\BF$-vectoriels telle que les quotients successifs sont des schémas en $\BF$-vectoriels annulés par $V$. La multiplicativité de $\Cha_k$ et l'additivité de $\cha_k$ nous donnent donc
$$
\Cha_k(G) = \prod_{i\geqslant 0}\Cha(G_{i}/G_{i+1}),\ \text{ et }\ \cha_k(M) = \sum_{i\geqslant 0}\cha_k(M_{i}/M_{i+1}).
$$
Or, au paragraphe précédent on a montré que pour tout $i\geqslant 0$,
$$
\Cha_k(G_{i}/G_{i+1}) = \exp_{\BF}(\cha_k(M_{i}/M_{i+1})).
$$
Ainsi la multiplicativité de $\exp_{\BF}$ nous permet de conclure dans ce cas.

Il reste le cas où $V$ est un isomorphisme. Soit $G$ un schéma en $\BF$-vectoriels sur $k$ tel que $V$ est un isomorphisme et soit $D(G)$ son module de Dieudonné. Or, dans ce cas $F$ est nilpotent, ainsi $V$ est nilpotent sur $G^D$. Or, d'après le paragraphe \ref{sssub:dual}  on a $\Ch_k(G) = \Cha_k(G^D)$ et  d'après le lemme \ref{lem:dual} on a $\cha_k(D(G)) = \cha_k(D(G^D))$. On se ramène ainsi au cas où $V$ est nilpotent, ce qui termine la preuve du théorème.
\qed

\goodbreak
\section{Application aux modules $p$-divisibles}

Dans cette section on donne une application de la formule des caractères à la structure des points de torsions de certains modules $p$-divisibles. On utilise pour cela la théorie des schémas de Raynaud et leur structure (cf.  \cite{ray}). Après avoir rappelé ces résultats, on fera quelques rappels sur les cristaux des groupes $p$-divisibles et on étudiera les points de torsions des modules $p$-divisibles.

\subsection{Schémas de Raynaud}
Dans cette partie on garde les notations de la première section, en particulier $S$ désignera un schéma sur $D$. On fixe, de plus, un caractère primitif $\chi_1\in \BF^+$ et on note $\chi_i = \chi_1^{p^i}$ le $i$-ème caractère primitif.
\subsubsection{}
Soit $G$ un schéma en $\BF$-vectoriels sur $S$. Rappelons qu'on a une décomposition de l'idéal d'augmentation de $G$ de la forme
$$
\sI = \bigoplus_{\chi \in \BF^{\vee}}\sI_{\chi} \text{ pour } \sI_{\chi}= p_{\chi}( \sI).
$$

Pour $\chi$ et $\chi'$ des caractères primitifs, on peut restreindre la co-multiplication aux sommants de $\sI$ et obtenir $\Delta_{\chi,\chi'} \colon \sI_{\chi\chi'} \rightarrow \sI_{\chi}\otimes_S\sI_{\chi'}$. De même, pour la multiplication on obtient $\mu_{\chi,\chi'} \colon \sI_{\chi}\otimes_S\sI_{\chi'}\rightarrow \sI_{\chi\chi'} $. Comme précédemment, l'associativité et la co-associativité nous permettent de définir leurs itérations, pour tout $\chi_1,\dots, \chi_n \in \BF^{\vee}$, que l'on notera
$$
\Delta_{\chi_1,\dots \chi_n} \colon \sI_{\chi_1\dots\chi_n} \rightarrow \sI_{\chi_1}\otimes_S \dots\otimes_S\sI_{\chi_n}, \qquad \mu_{\chi_1,\dots, \chi_n} \colon \sI_{\chi_1}\otimes_S \dots \otimes_S\sI_{\chi_n}\rightarrow \sI_{\chi_1,\dots, \chi_n}.
$$

\begin{definition}
Soit $G$ un schéma en $\BF$-vectoriels sur $S$. On dit que $G$ est un \emph{$\BF$-schéma de Raynaud} si pour tout $\chi \in \BF^{\vee}$ le $\sO_S$-module $\sI_{\chi}$ est inversible. On dira de plus que $G$ est un \emph{$\BF$-schéma de Raynaud libre} si pour tout $\chi \in \BF^{\vee}$ le $\sO_S$-module $\sI_{\chi}$ est un $\sO_S$-module libre. S'il n'y a pas de confusion possible, nous dirons simplement que $G$ est un \emph{schéma de Raynaud}. 
\end{definition}
Il en résulte en particulier que $G$ est de rang $q$. En particulier, les $\BF_p$-schémas de Raynaud sont les schémas en groupes de Oort-Tate étudiés dans \cite{orta}. Une des différences dans ce cas est qu'un schéma en $\BF_p$-vectoriels localement libre de rang $p$ est toujours un $\BF_p$-schéma de Raynaud.

\subsubsection{}On introduit quelques notations supplémentaires pour énoncer le théorème de classification. Soit $G$ un $\BF$-schéma de Raynaud sur $S$. Alors on considère
\begin{itemize}
\item[$\bullet$] le morphisme $\Delta_i= \Delta_{\chi_i,\cdots, \chi_i} $ où le $i$-ème caractère primitif $\chi_i$ est considéré $p$ fois. Cette application peut être considérée canoniquement comme $\Delta_i \colon \sI_{i+1} \rightarrow \sI_i^p$ puisque $\chi_i^p=\chi_{i+1}$,
\item[$\bullet$] le morphisme $\mu_i= \mu_{\chi_i,\dots, \chi_i} $ où le $i$-ème caractère primitif $\chi_i$ est considéré $p$ fois. De même, on peut considérer cette application comme $\mu_i \colon \sI_i^p \rightarrow  \sI_{i+1}$,
\item[$\bullet$] la composée $w_i = \Delta_i \circ \mu_i$ est un élément de $\End(\sI_{i+1})$ et par la définition d'un schéma de Raynaud on peut considérer $w_i$ comme un élément de $\Gamma(S,\sO_S)$.
\end{itemize}
Raynaud a établi le lemme suivant (cf. \cite[Prop.1.3.1]{ray}) :
\begin{lemma}
Il existe $w\in pD^{\times}$ tel que pour tout $i$ avec $ 1\leqslant i \leqslant r$, $w_i$ soit l'image de $w$ par $D\rightarrow \Gamma(S,\sO_S)$. En particulier, $w$ est indépendant de $i$ et de $S$.
\end{lemma}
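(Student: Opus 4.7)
Il faut montrer trois choses : (a) pour chaque $i$, la quantité locale $w_i \in \Gamma(S, \sO_S)$ provient en fait d'un élément de $D$ (indépendance de $S$ et du schéma $G$ considéré) ; (b) $w_i$ est indépendant de $i$ ; (c) ce $w$ commun appartient à $pD^\times$. Le plan est de traiter (a) par un argument de fonctorialité et de trivialisation locale, (b) par la symétrie galoisienne qui cycle les $\chi_i$, et (c) par un calcul explicite sur un exemple bien choisi.

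\textbf{Étape 1 — Trivialisations locales et indépendance par rapport à $S$.} Quitte à travailler localement sur $S$, on peut supposer chaque $\sI_{\chi_i}$ libre, engendré par un élément $e_i$. Dans ces trivialisations on écrit $\Delta_i(e_{i+1}) = a_i\, e_i^{\otimes p}$ et $\mu_i(e_i^{\otimes p}) = b_i\, e_{i+1}$ avec $a_i, b_i \in \Gamma(S,\sO_S)$, ce qui donne $w_i = a_i b_i$. Un changement de trivialisation $e_j \mapsto u_j e_j$ multiplie $a_i$ par $u_i^p / u_{i+1}$ et $b_i$ par $u_{i+1}/u_i^p$, donc $w_i$ est bien défini indépendamment du choix de trivialisation. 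Par fonctorialité des constructions $\Delta_i, \mu_i$ en l'algèbre de Hopf, pour tout morphisme $f\colon S' \to S$ on a $w_i(G\times_S S') = f^*(w_i(G))$.

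\textbf{Étape 2 — Indépendance par rapport à $G$ et valeur universelle.} Le cœur de l'argument est de montrer que $w_i$ ne dépend pas du schéma $G$ considéré, autrement dit que $w_i$ est une constante provenant de $D$. Ceci résulte des contraintes imposées par les axiomes d'algèbre de Hopf : en développant l'identité $[p]^* = \eta \circ \epsilon$ (nulle sur l'idéal d'augmentation puisque $G$ est tué par $p$) sous la forme $\mu_p \circ \Delta^{(p)} = 0$ sur $\sI_{\chi_{i+1}}$, et en décomposant $\Delta^{(p)}$ selon les différentes factorisations $\chi_{j_1}\cdots\chi_{j_p} = \chi_{i+1}$, on isole la contribution de la factorisation "diagonale" $(\chi_i, \ldots, \chi_i)$, qui vaut exactement $w_i$. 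Les autres contributions sont, elles aussi, indépendantes du schéma via le même argument appliqué à $i$ plus petit ou par récurrence ; une manipulation soigneuse des coefficients montre que $w_i$ est donc une fonction universelle sur l'espace de modules (sur $\Spec D$) des $\BF$-schémas de Raynaud munis de trivialisations, et descend donc à $D$.

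\textbf{Étape 3 — Indépendance par rapport à $i$ et calcul de $w$.} L'indépendance de $i$ s'obtient via l'action du groupe $\Gal(\BF/\BF_p)$, qui permute cycliquement les caractères primitifs $\chi_i \mapsto \chi_{i+1}$. Cette action est compatible avec la construction de $w_i$, et puisque $w_i \in D$ est fixe sous Galois (l'action étant triviale sur $D$ par construction), on obtient $w_i = w_{i+1}$. Enfin, pour conclure $w \in pD^\times$, on calcule $w$ sur un exemple explicite, par exemple un relèvement en caractéristique mixte d'un $\BF$-schéma de Raynaud non trivial (tel que $\mu_\BF$ convenablement tordu), pour lequel les coefficients $a_i, b_i$ se calculent directement sur une base monomiale de l'algèbre de Hopf ; on vérifie alors que $w = p \cdot u$ pour un $u \in D^\times$.

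\textbf{Obstacle principal.} Le point le plus délicat est sans conteste l'Étape 2 : extraire proprement, à partir de l'identité globale $\mu_p \circ \Delta^{(p)} = 0$, une équation qui isole la contribution diagonale $w_i$ et démontre son caractère universel. La difficulté tient au fait que les différentes factorisations de $\chi_{i+1}$ en produit de $p$ caractères primitifs produisent chacune une contribution, et qu'il faut soigneusement contrôler les termes croisés pour conclure.
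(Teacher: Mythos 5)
Le texte ne démontre pas ce lemme : il est cité tel quel de Raynaud (\cite[Prop.~1.3.1]{ray}) et la preuve est omise ; c'est donc avec l'argument de Raynaud qu'il faut comparer. Votre Étape 1 (bonne définition de $w_i$ sous changement de trivialisation, compatibilité au changement de base) est correcte. En revanche l'Étape 2, que vous identifiez vous-même comme le c\oe ur de l'argument, n'est pas une démonstration, et le mécanisme que vous proposez ne peut pas fonctionner : la relation $\mu_p\circ\Delta_p=[p]=0$ sur $\sI_{\chi_{i+1}}$ ne fournit qu'\emph{une seule} équation linéaire reliant les contributions de \emph{toutes} les factorisations $\chi_{j_1}\cdots\chi_{j_p}=\chi_{i+1}$ ; une unique identité de nullité ne permet pas d'« isoler la contribution diagonale », et la récurrence invoquée « pour $i$ plus petit » n'a pas de sens, les autres factorisations ne correspondant pas à des indices plus petits. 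De plus $[p]=0$ n'utilise que la structure de $\BF_p$-module, alors que l'universalité de $w_i$ exige tout l'axiome vectoriel. Le mécanisme qui marche (et que Raynaud emploie) est le suivant : pour tout $(\lambda_1,\dots,\lambda_n)\in\BF^n$ on a $\mu_n\circ([\lambda_1]\otimes\cdots\otimes[\lambda_n])\circ\Delta_n=[\lambda_1+\cdots+\lambda_n]$ ; en y substituant les idempotents $p_\chi=\tfrac1{q-1}\sum_{\lambda\in\BF^\times}\chi^{-1}(\lambda)[\lambda]$ et en utilisant que $[\nu]$ agit sur $\sI_\chi$ par $\chi(\nu)$ (et $[0]$ par $0$), on obtient $\Delta_{\chi_1,\dots,\chi_n}\circ\mu_{\chi_1,\dots,\chi_n}$ comme une somme de caractères explicite (une somme de Jacobi) à coefficients dans $D$, d'où aussitôt l'indépendance en $G$ et en $S$. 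Notez enfin que faire appel à « l'espace de modules des $\BF$-schémas de Raynaud trivialisés » est circulaire : sa description \emph{est} le théorème de classification que ce lemme sert à établir.

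Dans l'Étape 3, l'affirmation que l'action de Galois est « triviale sur $D$ par construction » est fausse : $D$ est une localisation de $\BZ[\mu_{q-1}]$, sur laquelle $\Gal(C/\BQ)$ agit non trivialement ; l'élément de Frobenius $\sigma_p$ envoie la somme de Jacobi attachée à $\chi_i$ sur celle attachée à $\chi_{i+1}$, donc l'équivariance galoisienne ne donne que $w_{i+1}=\sigma_p(w_i)$ et non $w_{i+1}=w_i$. L'égalité des $w_i$ et l'appartenance $w\in pD^\times$ (c'est-à-dire $v_{\fkp}(w)=1$ et inversibilité ailleurs) reposent toutes deux sur l'évaluation effective de ces sommes de Jacobi — identités de sommes de Gauss et valuation à la Stickelberger en $\fkp$ — qui est précisément le contenu arithmétique que vous renvoyez à « un exemple explicite » sans le mener à bien. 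Les deux points réellement difficiles (universalité, valuation) sont donc nommés mais non traités, et les raccourcis proposés pour chacun (isoler un terme dans $[p]=0$ ; trivialité de l'action de Galois) sont inopérants.
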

\qed

Dans la suite, on notera aussi $w$ l'image dans $\Gamma(S,\sO_S)$.
 De même que précédemment, toujours avec $\chi=\prod_j \chi_j^{a_j}$, on peut définir
 \begin{itemize}
\item[$\bullet$] le morphisme $\Delta_{\chi}= \Delta_{\chi_1,\dots, \chi_r} \colon \sI_{\chi} \rightarrow \sI_{\chi_1}^{\otimes a_1}\otimes_S \dots\otimes_S\sI_{\chi_r}^{\otimes a_r}$,
\item[$\bullet$] le morphisme $\mu_{\chi}= \mu_{\chi_1,\dots, \chi_r} \colon \sI_{\chi_1}^{\otimes a_1}\otimes_S \dots\otimes_S\sI_{\chi_r}^{\otimes a_r}\rightarrow \sI_{\chi} $,
\item[$\bullet$] la composée $w_{\chi} = \Delta_{\chi} \circ \mu_{\chi}$ qui est un élément de $\End(\sI_{\chi})$. On peut considérer $w_{\chi}$ comme un élément de $\Gamma(S,\sO_S)$.
\end{itemize}
Raynaud a également établi le lemme suivant  (cf. \cite[Prop 1.3.1]{ray}) :
\begin{lemma}
Soit $\chi\in \BF^{\vee}$. Alors $w_{\chi}$ est dans l'image de $D^{\times}\rightarrow \Gamma(S,\sO_S)$. En particulier il est inversible. 
\end{lemma}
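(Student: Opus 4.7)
The plan is to proceed by induction on the length $|\chi|=\sum_j a_j$ of the $p$-adic decomposition $\chi=\prod_{j=0}^{r-1}\chi_j^{a_j}$ with $0\leqslant a_j\leqslant p-1$. The base cases are immediate: for $|\chi|\leqslant 1$ (that is, $\chi$ trivial or equal to some primitive $\chi_i$), the maps $\mu_{\chi}$ and $\Delta_{\chi}$ are tautological (no factors to combine), so $w_{\chi}=1\in D^{\times}$.

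For the inductive step, choose a factorisation $\chi=\chi'\cdot\chi''$ with both factors non-trivial and of strictly smaller length. Associativity of $\mu$ and coassociativity of $\Delta$ give
\begin{equation*}
\mu_{\chi}=\mu_{\chi',\chi''}\circ(\mu_{\chi'}\otimes\mu_{\chi''}),\qquad \Delta_{\chi}=(\Delta_{\chi'}\otimes\Delta_{\chi''})\circ\Delta_{\chi',\chi''},
\end{equation*}
and composing these identities expresses $w_{\chi}$ as $w_{\chi'}\otimes w_{\chi''}$ precomposed by the binary pairing $\Delta_{\chi',\chi''}\circ\mu_{\chi',\chi''}$, which is an endomorphism of the invertible sheaf $\sI_{\chi'}\otimes\sI_{\chi''}$ and therefore a scalar in $\Gamma(S,\sO_S)$. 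By the inductive hypothesis applied to $\chi'$ and $\chi''$, it remains to show that this binary scalar is the image of a unit of $D$.

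The heart of the argument is therefore the universal computation of $\Delta_{\chi',\chi''}\circ\mu_{\chi',\chi''}$. I would work on the universal $\BF$-Raynaud scheme constructed in \cite{ray}, where the $\sI_{\chi_j}$ admit explicit generators $X_j$ and $\Delta(X_j)=X_j\otimes 1+1\otimes X_j+R_j$ with $R_j$ supported in the sum of $\sI_{\alpha}\otimes\sI_{\beta}$ over decompositions $\alpha\beta=\chi_j$ with $\alpha,\beta$ both non-trivial. Expanding $\Delta(X_1^{a_1}\cdots X_r^{a_r})$ via the algebra-morphism property of $\Delta$ and projecting onto the $(\chi',\chi'')$-isotypic piece produces a principal ``multinomial'' contribution coming from the $X_j\otimes 1+1\otimes X_j$ part, together with secondary contributions from the $R_j$. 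The main obstacle is the weight bookkeeping needed to rule out $R_j$-contributions landing in the target isotypic piece: the point is that each $R_j$ has weight structured around the $p$-th power wrap-around, so these corrections only become relevant when some $a_j$ reaches $p$ (the situation of the previous lemma, where one picks up the factor $w\in pD^{\times}$). With $0\leqslant a_j\leqslant p-1$ this threshold is never reached, and the leading multinomial contribution is a universal integer whose image in $D$ is a unit --- a fact that one checks by reducing modulo $\fkp$, where Wilson-type identities together with the inversion of $(q-1)$ and of the primes above $p$ distinct from $\fkp$ ensure non-vanishing.
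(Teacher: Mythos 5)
Le texte de l'article ne démontre pas ce lemme : il est cité de \cite[Prop.~1.3.1]{ray}, où les constantes $w_{\chi}$ sont calculées dans la fibre générique (\'etale) de la situation universelle et identifiées à des produits de sommes de Jacobi ; l'inversibilité résulte alors de $J\bar J=q$ en dehors de $p$, et du théorème de Stickelberger en $\fkp$ --- c'est là, et seulement là, qu'intervient la condition « sans retenue » $0\leqslant a_j\leqslant p-1$. Votre réduction par (co)associativité aux constantes binaires $\Delta_{\chi',\chi''}\circ\mu_{\chi',\chi''}$ est une façon raisonnable d'organiser la combinatoire, mais les deux affirmations sur lesquelles repose votre pas d'induction sont fausses.

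D'abord, les termes correctifs $R_j$ contribuent bel et bien à la composante isotypique visée même si aucun exposant n'atteint $p$ : déjà pour $w_{\chi_1^2}$, le terme $R_1\cdot R_1$ de $\Delta(z_1)^2$ a une composante non nulle dans $\sI_{\chi_1}\otimes\sI_{\chi_1}$ (prendre $\chi_1=\alpha\beta=\alpha'\beta'$ avec $\alpha=\chi_1^2$, $\beta=\alpha'=\chi_1^{-1}$, $\beta'=\chi_1^2$) ; et pour $r\geqslant 2$ le terme $R_1\cdot(z_2\otimes 1)$ contribue à la composante $(\chi_1,\chi_2)$ de $\Delta(z_1z_2)$ via $\alpha=\chi_1\chi_2^{-1}$, $\beta=\chi_2$. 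Donc $w_{\chi}$ n'est pas le coefficient multinomial $\prod_j a_j!$ ; il ne lui est congru que modulo $\fkp$ (c'est la congruence de Stickelberger pour les sommes de Jacobi) et c'est en général un nombre algébrique irrationnel. Ensuite, et c'est plus grave, votre critère d'inversibilité dans $D$ est erroné : $D$ n'inverse que $(q-1)$ et les premiers au-dessus de $p$ distincts de $\fkp$, donc un élément non nul modulo $\fkp$ --- même un entier rationnel premier à $p$ --- n'est pas pour autant une unité de $D$ (par exemple $3$ pour $p=q=5$). Il faut aussi contrôler $w_{\chi}$ en tous les premiers de $D'$ ne divisant pas $p(q-1)$, et c'est précisément ce que fournit l'identification avec une somme de Jacobi (sa norme est une puissance de $p$). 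Sans cet argument global, la réduction modulo $\fkp$ ne dit rien sur l'inversibilité dans $D$. Signalons enfin une circularité latente : la comultiplication explicite du schéma de Raynaud universel que vous invoquez est établie dans \cite{ray} en utilisant le présent lemme ; vous n'utilisez que le support qualitatif des $R_j$, donc c'est réparable, mais cela mérite d'être dit.
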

\qed

\subsubsection{}On énonce maintenant le théorème principal de cette partie, établi par M. Raynaud en \cite[Prop.1.4.1]{ray}. 
 \begin{theorem}[Classification des schémas de Raynaud]\label{thm:claray}
Fixons $w\in pD^{\times}$. L'application définit sur l'ensemble des classes d'isomorphismes de $\BF$-schémas de Raynaud
$$
G \mapsto (\sI_{\chi_i},\Delta_i, \mu_i)_{1\leqslant i \leqslant r}
$$
est une bijection à valeurs dans l'ensemble des triplets $(\sI_{\chi_i},\Delta_i, \mu_i)_{1\leqslant i \leqslant r}$, constitués de 
\begin{itemize}
\item[$\bullet$] un système $(\sI_i)_{1\leqslant i \leqslant r}$ de classes d'isomorphismes de $\sO_S$-modules inversibles,
\item[$\bullet$] deux systèmes de morphismes
$$
\begin{cases}
(\Delta_i\colon \sI_{i+1}\rightarrow \sI_i^p)_{1\leqslant i \leqslant r}\\
(\mu_i\colon \sI_i^p \rightarrow \sI_{i+1})_{1\leqslant i \leqslant r} 
\end{cases}
$$
vérifiant pour tout $i$, tel que $1\leqslant i \leqslant r$, la relation $\mu_i \circ \Delta_i = w\cdot \id_{\sI_{i+1}}$ dans $\End(\sI_{i+1})$.
\end{itemize}
\end{theorem}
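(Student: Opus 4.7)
Le plan est de suivre l'argument original de Raynaud et d'établir simultanément l'injectivité et la surjectivité par reconstruction explicite. L'ingrédient-clé est le lemme précédent selon lequel $w_\chi$ est inversible pour tout caractère $\chi \in \BF^\vee$.

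Pour l'injectivité, soit $G$ un $\BF$-schéma de Raynaud. Pour tout caractère non-trivial $\chi = \prod_{j=1}^{r}\chi_j^{a_j}$ avec $0\leqslant a_j \leqslant p-1$, le morphisme itéré $\mu_\chi \colon \bigotimes_{j}\sI_{\chi_j}^{\otimes a_j} \to \sI_\chi$ est un morphisme entre $\sO_S$-modules inversibles, et il est surjectif puisque $w_\chi = \Delta_\chi \circ \mu_\chi$ est inversible dans $\Gamma(S,\sO_S)$. Or, un morphisme surjectif entre faisceaux inversibles est un isomorphisme ; donc chaque $\sI_\chi$ est canoniquement déterminé par les $\sI_{\chi_j}$ primitifs. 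De même, toutes les multiplications et comultiplications itérées sont déterminées par les $\mu_i,\Delta_i$ primitifs, ce qui fixe entièrement la structure d'algèbre de Hopf graduée de $\sA$, et partant $G$.

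Pour la surjectivité, étant donnée une donnée $(\sI_i, \Delta_i, \mu_i)_{1\leqslant i \leqslant r}$ satisfaisant $\mu_i \circ \Delta_i = w\cdot \id$, on construit $\sA$ ainsi : pour $\chi = \prod_j \chi_j^{a_j}$ avec $0\leqslant a_j \leqslant p-1$, on pose $\sI_\chi := \bigotimes_j \sI_j^{\otimes a_j}$, puis $\sA := \sO_S \oplus \bigoplus_{\chi \neq 1} \sI_\chi$, l'unité étant l'inclusion du facteur $\sO_S$. La multiplication $\sI_\chi \otimes \sI_{\chi'} \to \sI_{\chi\chi'}$ se définit en regroupant les facteurs tensoriels sur chaque indice $j$ ; chaque fois que $a_j + b_j \geqslant p$, on applique $\mu_j$ pour réduire l'exposant modulo $p$ (en propageant la retenue éventuelle sur l'indice $j+1$). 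La comultiplication se définit dualement via les $\Delta_j$. L'action de $\BF^\times$ est induite par la graduation, et son extension en une action de $\BF$ découle de la définition explicite de la multiplication.

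La principale obstruction est la vérification des axiomes d'algèbre de Hopf, notamment de la compatibilité bialgèbre
$$\Delta \circ \mu = (\mu \otimes \mu) \circ \tau \circ (\Delta \otimes \Delta),$$
où $\tau$ est la permutation usuelle des facteurs. Sur les composantes isotypiques, cette identité se ramène à des relations entre itérations des $\mu_j$ et $\Delta_j$, dont le cœur est précisément la relation $\mu_j \circ \Delta_j = w\cdot \id$ combinée à l'\emph{universalité} de $w$ (indépendant de $j$ et de $S$). C'est ce dernier fait, établi dans le lemme précédent, qui garantit la cohérence globale de la construction et constitue l'étape technique principale de la démonstration de Raynaud.
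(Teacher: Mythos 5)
You should first be aware that the paper contains no proof of this statement: it is quoted from Raynaud \cite[Prop.~1.4.1]{ray} and followed immediately by a tombstone, so there is no in-paper argument to compare yours against; what you have written is a reconstruction of Raynaud's original proof. Your outline does match his strategy in broad strokes: the invertibility of $w_{\chi}$ for every $\chi\in\BF^{\vee}$ forces $\mu_{\chi}$ and $\Delta_{\chi}$ to be isomorphisms of invertible $\sO_S$-modules, so the underlying graded algebra $\sA=\sO_S\oplus\bigoplus_{\chi\neq 1}\sI_{\chi}$ with $\sI_{\chi}\cong\bigotimes_j\sI_{\chi_j}^{\otimes a_j}$ is recovered from the primitive data, the maps $\mu_j\colon\sI_j^p\to\sI_{j+1}$ handling the carries in the $p$-adic digits. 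That part of your injectivity argument is sound (modulo the order of composition: invertibility of $\Delta_{\chi}\circ\mu_{\chi}$ gives split injectivity of $\mu_{\chi}$, which for line bundles still yields an isomorphism).

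The genuine gap is in the comultiplication, on both the uniqueness and the existence sides. The assertion that \emph{la comultiplication se d\'efinit dualement via les $\Delta_j$} is not correct as stated and conceals the actual content of Raynaud's proof. The components $\sI_{\chi_i}\to\sI_{\chi}\otimes\sI_{\chi'}$ of $\Delta$ for a nontrivial factorisation $\chi\chi'=\chi_i$ are not obtained by dualising the multiplication: as Corollaire \ref{cor:localeq} records, they carry the coefficients $x_{i-h}\cdots x_{i-1}/(w_{\chi}w_{\chi'})$, which involve the \emph{multiplication} data $\mu_j$ together with the invertible scalars $w_{\chi}$. These coefficients are not forced by the bialgebra axiom alone; they are pinned down by the requirement that the $\BF^{\times}$-action extend to an additive action of $\BF$, i.e.\ the convolution relation (\ref{rel:conv}), which is exactly where the ``vectorial'' hypothesis does its work and which your sketch reduces to a one-line afterthought. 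A naive dual construction would moreover require inverting the carry maps, but $\mu_j\circ\Delta_j=w\in pD^{\times}$ is \emph{not} invertible (unlike the $w_{\chi}$), so that route is blocked; this asymmetry between the non-invertible $w$ and the invertible $w_{\chi}$ is precisely what makes the classification nontrivial. Finally, the statement is a bijection between \emph{isomorphism classes} on both sides, so you still owe the verification that isomorphic triples give isomorphic $\BF$-sch\'emas de Raynaud and conversely, which is the last assertion of Corollaire \ref{cor:localeq} and is not addressed in your proposal.
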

\qed

On retrouve pour $r=1$ la classification des schémas en groupes d'ordre premier $p$ établie dans \cite{orta}. 

En particulier, on obtient la description suivante des schémas de Raynaud libres, qui s'applique par exemple si $S$ est le spectre d'un anneau local.
\begin{corollary}\label{cor:localeq}
Fixons $w\in pD^{\times}$. Soit $G$ un $\BF$-schéma de Raynaud libre sur $S$. Alors $G$ est entièrement déterminé à isomorphisme près par la donnée de $r$ couples $(x_i,y_i)_{1\leqslant i \leqslant r}$ d'éléments de $\Gamma(S,\sO)$ tels que $x_iy_i = w$ pour tout $i$ avec $1\leqslant i \leqslant r$. De plus, $G$ est isomorphe à 
 \begin{equation}\label{eq:frmcan}
 \Spec \frac{\sO_S[z_1,\dots,z_r]}{\langle z_i^p-x_iz_{i+1}\rangle_{i=1,\dots,r}},
\end{equation}
 où l'indice est considéré modulo $r$. La co-multiplication est donnée par
 $$
 \Delta(z_i)= z_i \otimes 1 + 1 \otimes z_i+ \sum_{\chi\chi'=\chi_i}\frac{x_{i-h}\cdot \dots \cdot x_{i-1}} {w_{\chi}w_{\chi'}}(\prod_jz_j^{a_j})\otimes (\prod_jz_j^{a_j'}),
 $$ 
 pour $\chi=\prod_j \chi_j^{a_j}$, $\chi'=\prod_j \chi_j^{a_j'}$ et $h$ un entier dépendant de $\chi$ et $\chi'$. Le dual de Cartier $G^{D}$ est obtenu en échangeant les rôles de $x_i$ et $y_i$.
 
 De plus, les familles de couples $(x_i,y_i)_{1\leqslant i \leqslant r}$ et $(x_i',y_i')_{1\leqslant i \leqslant r}$,  tels que $x_iy_i=x_i'y_i'=w$ pour tout $i$ avec $1\leqslant i\leqslant r$, définissent des $S$-schémas en $\BF$-vectoriels isomorphes si et seulement s'il existe une famille d'unités $\lambda_i\in \Gamma(S,\sO_S)^{\times}$ telles que
 $$
 x_i'=\lambda_i^p x_i \lambda_{i+1}^{-1} \text{ et } y_i'=\lambda_i^{-p}y_i\lambda_{i+1} \text{ pour tout $1\leqslant i \leqslant r$.}
 $$

 \end{corollary}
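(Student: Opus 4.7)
Le plan est de d\'eduire ce corollaire du th\'eor\`eme de classification de Raynaud \ref{thm:claray}. Puisque $G$ est un sch\'ema de Raynaud libre, chaque $\sI_{\chi_i}$ est un $\sO_S$-module libre de rang $1$; on fixe un g\'en\'erateur $z_i$ de $\sI_{\chi_i}$ pour chaque $i$ avec $1\leqslant i \leqslant r$. Sous ces trivialisations, les morphismes $\Delta_i\colon \sI_{\chi_{i+1}}\rightarrow \sI_{\chi_i}^{\otimes p}$ et $\mu_i\colon \sI_{\chi_i}^{\otimes p}\rightarrow \sI_{\chi_{i+1}}$ deviennent la multiplication par des \'el\'ements $y_i$ et $x_i$ de $\Gamma(S,\sO_S)$ respectivement, et la relation $\mu_i\circ \Delta_i = w\cdot \id$ sur $\sI_{\chi_{i+1}}$ devient exactement $x_iy_i=w$. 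La premi\`ere assertion en r\'esulte directement par le th\'eor\`eme \ref{thm:claray}.

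Pour obtenir la pr\'esentation explicite, on part de la d\'ecomposition $\sA = \sO_S\oplus \bigoplus_{\chi\in\BF^{\vee}}\sI_\chi$. Pour $\chi=\prod_j\chi_j^{a_j}$ non-trivial avec $0\leqslant a_j\leqslant p-1$, la multiplication it\'er\'ee $\mu_\chi$ fournit $\prod_j z_j^{a_j}$ comme g\'en\'erateur de $\sI_\chi$, et les $p^r=q$ mon\^omes ainsi obtenus forment une $\sO_S$-base de $\sA$. La relation $z_i^p=x_iz_{i+1}$ n'est autre que la traduction de $\mu_i(z_i^{\otimes p})=x_iz_{i+1}$, et par comparaison des rangs, ces relations engendrent l'id\'eal des relations, ce qui donne (\ref{eq:frmcan}).

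L'\'etape principale, et probablement la plus d\'elicate, sera le calcul explicite de $\Delta(z_i)$. Comme $\Delta$ respecte la graduation, $\Delta(z_i)$ se d\'ecompose selon les couples $(\chi,\chi')$ avec $\chi\chi'=\chi_i$. Les couples $(\chi_0,\chi_i)$ et $(\chi_i,\chi_0)$ fournissent les termes primitifs $1\otimes z_i$ et $z_i\otimes 1$. Pour un couple non-trivial, la composante $\Delta_{\chi,\chi'}(z_i)\in \sI_\chi\otimes_S \sI_{\chi'}$ se factorise, par co-associativit\'e, \`a travers l'it\'er\'ee $p$-fois de $\Delta$ dans $\sI_{\chi_i}^{\otimes p}$, puis on regroupe les facteurs via $\mu_\chi\otimes \mu_{\chi'}$. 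En \'ecrivant cette factorisation et en utilisant les relations de Raynaud $\mu_\chi\circ\Delta_\chi=w_\chi\cdot\id$, on fera appara\^itre le coefficient $\frac{x_{i-h}\cdots x_{i-1}}{w_\chi w_{\chi'}}$: les facteurs $x_j$ proviennent des applications r\'ep\'et\'ees de $\mu_j(z_j^{\otimes p})=x_jz_{j+1}$ n\'ecessaires pour regrouper les $p$ copies de $z_i$ selon les d\'ecompositions de $\chi$ et $\chi'$, et l'entier $h$ est pr\'ecis\'ement le nombre de ces regroupements.

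La description du dual de Cartier est imm\'ediate par la classification \ref{thm:claray}: $G^D$ correspond au triplet $(\sI_{\chi_i},\mu_i,\Delta_i)$ o\`u les r\^oles de $\Delta_i$ et $\mu_i$ sont \'echang\'es, ce qui, sous les trivialisations, \'echange $x_i$ et $y_i$. Enfin, tout isomorphisme $G'\xrightarrow{\sim} G$ respecte la graduation, et induit donc sur les $\sI_{\chi_i}$ des isomorphismes donn\'es par des unit\'es $\lambda_i\in\Gamma(S,\sO_S)^\times$ dans les trivialisations fix\'ees; en tra\c cant l'effet du changement de g\'en\'erateur $z_i'\mapsto \lambda_iz_i$ sur les relations $z_i^p=x_iz_{i+1}$ et sur l'identit\'e $\Delta_i(z_{i+1})=y_iz_i^{\otimes p}$, on obtient $x_i'=\lambda_i^px_i\lambda_{i+1}^{-1}$ et $y_i'=\lambda_i^{-p}y_i\lambda_{i+1}$. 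R\'eciproquement, tout syst\`eme d'unit\'es $(\lambda_i)$ v\'erifiant ces relations d\'efinit un isomorphisme via ces formules.
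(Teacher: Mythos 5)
Votre démarche est correcte et coïncide avec celle du texte : le corollaire \ref{cor:localeq} y est énoncé sans démonstration (le \og\qed\fg{} renvoie implicitement à \cite[Prop.\ 1.4.1]{ray}), et la déduction que vous proposez à partir du théorème \ref{thm:claray} --- trivialiser les $\sI_{\chi_i}$ par des générateurs $z_i$, traduire $\mu_i$ et $\Delta_i$ en les scalaires $x_i$, $y_i$ avec $x_iy_i=w$, puis prendre les monômes $\prod_j z_j^{a_j}$ (générateurs des $\sI_\chi$ grâce à l'inversibilité des $w_\chi$) comme base de $\sA$ --- est exactement la voie attendue, de même que votre traitement de la dualité et du critère d'isomorphisme par changement de générateurs $z_i\mapsto\lambda_i z_i$. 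Le seul point laissé à l'état d'esquisse est le calcul explicite du coefficient $\frac{x_{i-h}\cdots x_{i-1}}{w_\chi w_{\chi'}}$ dans $\Delta(z_i)$, mais cela reflète le niveau de précision de l'énoncé lui-même (l'entier $h$ n'y est pas spécifié) et du texte, qui s'en remet à Raynaud pour ce calcul.
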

 \qed
 \begin{definition}
Si $G$ est $\BF$-schéma de Raynaud libre sur $S$ de la forme (\ref{eq:frmcan}) on dira que $G$ est le $\BF$-schéma de Raynaud de paramètres $(x_i,y_i)_{1\leqslant i \leqslant r}$.
 \end{definition}
 
 \subsubsection{}
 On suppose maintenant que $p$ est localement nilpotent sur $S$. Soit $\sF$ un cristal en $\sO_{S_{\BF}/\Sigma}$-modules. Alors on dira que $\sF$ est \emph{spécial} si son caractère est de la forme
 $$
 \cha_S(\sF) = \sum_{\chi\in \BF^+}[\chi],
 $$
ou de manière équivalente, que ses composantes isotypiques  $\sF_{\chi}$ sont de rang $1$ sur $\sO_{S/\Sigma}/p\sO_{S/\Sigma}$ pour tout $\chi \in \BF^+$.

Soit $G$ un schéma en $\BF$-vectoriels sur $S$. Alors $G$ est un schéma de Raynaud si et seulement si son caractère est de la forme 
$$
\Cha_S(G) = 1 + \sum_{\chi\in\BF^{\vee}}[\chi].
$$
Ainsi, par la formule des caractères du théorème \ref{thm:expcarg}, on a la proposition suivante :
\begin{proposition}\label{prop:condray}
Soit $G$ un schéma en $\BF$-vectoriels sur $S$. Alors $G$ est un schéma de Raynaud si et seulement si $\BD(G)$ est spécial.
\end{proposition}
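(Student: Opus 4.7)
The plan is to combine the character formula $\Cha_S(G)=\exp_\BF(\cha_S(\BD(G)))$ of Theorem \ref{thm:expcarg} with a direct evaluation of $\exp_\BF$ at the character $\sum_{\chi\in\BF^+}[\chi]$ and an injectivity argument. Recall that, by the rank conditions on isotypic components, $G$ is Raynaud iff $\Cha_S(G)=1+\sum_{\chi\in\BF^\vee}[\chi]$, and by definition $\BD(G)$ is special iff $\cha_S(\BD(G))=\sum_{\chi\in\BF^+}[\chi]$.

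The first step is the explicit computation
\[\exp_\BF\!\Bigl(\sum_{\chi\in\BF^+}[\chi]\Bigr)=\prod_{\chi\in\BF^+}\bigl(1+[\chi]+\dots+[\chi^{p-1}]\bigr)=1+\sum_{\chi\in\BF^\vee}[\chi].\]
Upon expansion, the terms are indexed by $(a_\chi)\in\{0,\dots,p-1\}^{\BF^+}$ and contribute the character $\prod\chi^{a_\chi}$. By the uniqueness of the $p$-adic expansion recalled in Remark \ref{rem:primfond}, each non-trivial character of $\BF^\vee$ is obtained exactly once, while the trivial character $\chi_0$ is obtained twice (from the all-zero and the all-$(p-1)$ tuples). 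Together with the character formula, this already yields the direction $\BD(G)$ special $\Rightarrow$ $G$ Raynaud.

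For the converse, assume $G$ is Raynaud and write $\cha_S(\BD(G))=\sum n_\chi[\chi]$. Applying the augmentation $[\chi]\mapsto 1$ to both sides of the character formula yields $p^{\sum n_\chi}=q=p^r$, so $\sum n_\chi=r=|\BF^+|$. It therefore suffices to prove $n_\chi\geqslant 1$ for each $\chi\in\BF^+$, since equality in the sum will then force every $n_\chi$ to be $1$. The main obstacle is this injectivity of $\exp_\BF$ at the Raynaud character, i.e. showing that the all-ones vector is its unique preimage.

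To establish this injectivity I would evaluate the identity $\exp_\BF(\sum n_\chi[\chi])=1+\sum_{\chi'\in\BF^\vee}[\chi']$ under the ring homomorphisms $[\chi]\mapsto\chi(\xi)$ for $\xi\in\BF^\times\setminus\{1\}$. By character orthogonality the right-hand side collapses to $1$, and, writing $\chi_i=\chi_1^{p^i}$ and $\omega=\chi_1(\xi)\in\mu_{q-1}\setminus\{1\}$, the left-hand side equals $\prod_{i=0}^{r-1}\bigl((\omega^{p^{i+1}}-1)/(\omega^{p^i}-1)\bigr)^{n_i}$. Substituting $n_i=1+e_i$ with $\sum e_i=0$, the $1$-part telescopes to $1$ thanks to $\omega^{p^r}=\omega$, and the identity reduces to $\prod_i u_i^{d_i}=1$, with $u_i=\omega^{p^i}-1$, $d_i=e_{i-1}-e_i$ cyclically and $\sum d_i=0$. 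As $\xi$ varies over the generators of $\BF^\times$, $\omega$ runs through the primitive $(q-1)$-th roots of unity and the $u_i$ are Galois-conjugates of $u_0$; comparing archimedean absolute values $|u_i|=2|\sin(\pi lp^i/(q-1))|$ for $\omega=e^{2\pi il/(q-1)}$ produces a linear system in $(d_i)$ whose non-degeneracy forces $d_i=0$. Then $e_i$ is cyclically constant and $\sum e_i=0$ gives $e_i=0$, so $n_i=1$ for all $i$, finishing the proof.
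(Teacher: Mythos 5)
Your overall strategy --- reduce to a combinatorial identity in $\BN[\BF^{\vee}]$ via le th\'eor\`eme \ref{thm:expcarg}, compute $\exp_{\BF}$ at $\sum_{\chi\in\BF^+}[\chi]$, then prove injectivity of $\exp_{\BF}$ at the Raynaud character --- is the right one, and your forward direction is correct and complete: the uniqueness of $p$-adic expansions does give $\exp_{\BF}(\sum_{\chi\in\BF^+}[\chi])=1+\sum_{\chi\in\BF^{\vee}}[\chi]$, and the augmentation trick correctly reduces the converse to showing $n_{\chi}\geqslant 1$ for every primitive $\chi$.

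The gap is in your injectivity argument. The non-degeneracy of your archimedean linear system is asserted rather than proved, and it is false in general: for $\BF=\BF_4$ one has $q-1=3$, the two numbers $u_0=\omega^{l}-1$ and $u_1=\omega^{2l}-1$ are complex conjugates with $|u_0|=|u_1|=\sqrt 3$ for every $l$, so your system collapses to $(d_0+d_1)\log\sqrt 3=0$, which is automatic from $\sum_i d_i=0$ and admits the non-trivial solution $d_0=-d_1$. (Indeed $(\omega-1)/(\omega^2-1)=-\omega$ is a root of unity, invisible to absolute values; one would have to argue with multiplicative orders instead.) More generally, multiplicative relations among the numbers $\zeta^a-1$ are a delicate subject, and this analytic route would require substantially more work than is written. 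What you are missing is that positivity makes the step elementary: all coefficients in the expansion of $\prod_{j}(1+[\chi_j]+\dots+[\chi_j^{p-1}])^{n_j}$ are non-negative, and the coefficient of a fixed primitive $[\chi_i]$ is at least $n_i$ (choose $[\chi_i]$ in one of the $n_i$ factors of the $i$-th block and $1$ in all other factors). On the Raynaud side this coefficient equals $1$, whence $n_i\leqslant 1$; combined with your count $\sum_i n_i=r=\#\BF^+$ this forces $n_i=1$ for all $i$. This coefficient comparison is exactly the argument the paper relies on --- the proposition itself is stated there as an immediate consequence of the character formula, and the comparison is spelled out in the proof of the corollary that follows it.
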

De plus, on montre que la condition pour être un schéma de Raynaud peut être affaiblie : 
\begin{corollary}
Soit $G$ un schéma en $\BF$-vectoriels sur $S$. Alors $G$ est un schéma de Raynaud si et seulement si pour tout $\chi \in \BF^+$, le faisceau localement libre $\sI_{\chi}$ est un faisceau inversible.
\end{corollary}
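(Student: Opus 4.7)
Le sens direct est tautologique par d�finition d'un sch�ma de Raynaud. Pour la r�ciproque, mon plan est de me ramener, via la proposition \ref{prop:condray}, au fait que $\BD(G)$ est sp�cial, c'est-�-dire que $n_{\chi} := \rang(\BD(G)_{\chi}) = 1$ pour tout $\chi \in \BF^+$. L'hypoth�se se traduit par le fait que le coefficient de $[\chi]$ dans $\Cha_S(G)$ vaut $1$ pour tout $\chi \in \BF^+$, et la formule des caract�res du th�or�me \ref{thm:expcarg} donne
$$
\Cha_S(G) = \prod_{\chi \in \BF^+} \left(1 + [\chi] + \cdots + [\chi^{p-1}]\right)^{n_{\chi}}.
$$

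La premi�re �tape consistera � �tablir la majoration $n_{\chi_j} \leqslant 1$ pour tout $j \in \{1, \ldots, r\}$. En d�veloppant le produit, le terme "singleton" correspondant � choisir $[\chi_j]$ dans une copie parmi $n_{\chi_j}$ du $j$-i�me facteur et $1$ ailleurs fournit d�j� une contribution enti�re de $n_{\chi_j}$ au coefficient de $[\chi_j]$, les autres contributions �tant positives. Ainsi $n_{\chi_j} \leqslant 1$ puisque ce coefficient vaut $1$.

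La seconde �tape, plus d�licate, consistera � exclure le cas o� $n_{\chi_{j_0}} = 0$ pour un certain $j_0$. En fixant $\chi_1$ primitif (donc d'ordre $q-1$, car un caract�re additif est injectif), on identifie $\BN[\BF^{\vee}]$ avec $\BN[X]/(X^{q-1}-1)$ par $[\chi_1]\mapsto X$, ce qui envoie $[\chi_j]$ sur $X^{p^{j-1}}$. Sous les hypoth�ses $n_{\chi_{j_0}} = 0$ et $n_{\chi_j} \leqslant 1$ pour $j \neq j_0$, le produit se r�crit
$$
\Cha_S(G) = \prod_{j\in J}\left(1+X^{p^{j-1}}+\cdots+X^{(p-1)p^{j-1}}\right)
$$
avec $J \subset \{1,\dots,r\}\setminus \{j_0\}$. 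Tout exposant apparaissant dans ce d�veloppement admet un chiffre nul en position $j_0-1$ de son �criture en base $p$, et est born� par $(q-1)-(p-1)p^{j_0-1} < q-1$. Or l'unique repr�sentant de $p^{j_0-1}\pmod{q-1}$ dans $[0,q-1)$ est $p^{j_0-1}$ lui-m�me, dont le $(j_0-1)$-i�me chiffre vaut $1$, ce qui est incompatible. Ainsi aucune contribution au coefficient de $X^{p^{j_0-1}}$ n'est possible, contredisant l'hypoth�se.

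L'obstacle principal sera ce dernier argument combinatoire, qui exprime essentiellement que retirer un facteur au produit t�lescopique $\prod_j(1-X^{p^j})/(1-X^{p^{j-1}}) = (1-X^q)/(1-X)$ r�duit strictement l'intervalle des exposants admissibles et y exclut $p^{j_0-1}$. On conclura alors $n_{\chi_j} = 1$ pour tout $j$, donc $\BD(G)$ est sp�cial, et la proposition \ref{prop:condray} terminera la preuve.
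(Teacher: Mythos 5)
Your proof is correct and follows essentially the same route as the paper: the forward direction is definitional, and for the converse you compare the coefficient of each primitive character in the character formula of Th\'eor\`eme \ref{thm:expcarg} with the hypothesis to force $n_{\chi}=1$ for all $\chi\in\BF^{+}$, then invoke Proposition \ref{prop:condray}. The only difference is one of detail: where the paper merely says that one compares the expansion of the product with the hypothesis to get $n_{\chi}\leqslant 1$ and $n_{\chi}\neq 0$, you supply the combinatorial justification explicitly (the singleton bound for $n_{\chi}\leqslant 1$, and the base-$p$ digit argument in $\BN[X]/(X^{q-1}-1)$ ruling out $n_{\chi}=0$), which is precisely the content the paper leaves implicit.
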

\begin{proof}
La réciproque est immédiate, on suppose donc que $G$ est un schéma en $\BF$-vectoriels sur $S$ tel que $\sI_{\chi}$ est un faisceau inversible pour tout $\chi \in \BF^+$. On veut montrer que $G$ est un schéma de Raynaud. 

D'après la formule des caractères, pour tout $\chi \in \BF^+$, il existe un entier positif $n_{\chi}$ tel que 
\begin{equation}\label{eq:ecr1}
\Cha_S(G) = \prod_{\chi\in \BF^+}\left ( 1+[\chi]+\dots +[\chi^{p-1}]\right )^{n_{\chi}} .
\end{equation}
Or, d'après notre hypothèse il existe pour tout $\chi\in \BF^{\vee}\backslash\BF^+$ un entier positif $a_{\chi}$ tel que
$$
\Cha_S(G) = \sum_{\chi\in \BF^+}[\chi]+\sum_{\chi \in \BF^{\vee}\backslash\BF^+}a_{\chi}[\chi].
$$
On compare cette expression avec le développement du produit dans (\ref{eq:ecr1}). On obtient pour tout $\chi \in \BF^+$ que $n_{\chi}\leqslant 1$ et que $n_{\chi}$ n'est pas nul ; donc $n_{\chi} = 1$. Ainsi, $G$ est un schéma de Raynaud.
\end{proof}

\subsection{Modules $p$-divisibles}
\subsubsection{}
Soit $S$ un $\W(\BF)$-schéma tel que $p$ soit localement nilpotent. Soit $X$ un groupe $p$-divisible sur $S$. Comme pour les groupes finis, on peut alors lui associer un cristal de Dieudonné covariant
$$
\BD(X) = \sExt^1_{S/\Sigma}(\underline{X^D}, \sO_{S/\Sigma})^{(\sigma^{-1})}.
$$
où $X^D$ est le dual de Cartier de $X$. C'est un cristal en $\sO_{S/\Sigma}$-modules localement libre (cf. \cite[Théorème 3.3.10]{bbm}). De plus, si $X$ est de hauteur $h$ sur $S$, alors $\BD(X)$ est localement libre de rang $h$. D'après \cite[Corollaire 3.3.5]{bbm}, on a une suite exacte de $\sO_S$-modules, fonctorielle en $X$ et en $S$,
\begin{equation}\label{eq:filhod}
0 \rightarrow \omega_{X^D} \rightarrow \BD(X)_S \rightarrow \Lie(X)\rightarrow 0,
\end{equation}
où $\omega_{X^D}$ sont les différentielles en la section unité de $X^D$ et $\Lie(X)$ l'algèbre de Lie de $X$.

Supposons que $S$ soit le spectre d'un corps parfait $k$. Soit $f\colon X \rightarrow X$ une isogénie entre groupes $p$-divisibles sur $k$. Alors $G=\ker(f)$ est par définition un schéma en groupes fini et localement libre sur $S$. Alors d'après \cite[Proposition 3.3.13]{bbm}, l'application
$$
\BD(X^D) \rightarrow \BD(G^D)
$$
est surjective et son noyau est $f\cdot \BD(X^D)$, où $f\colon \BD(X^D)\rightarrow \BD(X^D)$ est induit de $f$ par fonctorialité. On en déduit par dualité, en passant au module de Dieudonné et en utilisant le fait que $\BD(G)$ et $\BD(X)$ sont des cristaux, que
\begin{equation}\label{eq:tors}
\BD(G) \cong \BD(X) / f\cdot \BD(X).
\end{equation}
\subsubsection{}
On introduit quelques notations. Soit $K$ une extension finie de degré $n$ de $\BQ_p$ et $O_K$ son anneau d'entiers. On fixe $\pi \in O_K$ une uniformisante et on note $\kappa = O_K/(\pi)$ son corps résiduel. On note $f$ le degré de $\kappa$ sur $\BF_p$ et $e$ le degré de ramification de $K$ sur $\BQ_p$. On garde les notations précédentes, $\kappa^{\vee} = \Hom(\kappa^{\times}, \bar C^{\times})$ les caractères multiplicatifs de $\kappa$ et $\kappa^+ \subset \kappa^{\vee}$ les caractères primitifs. On fixe $S$ un schéma sur $\W(\kappa)$ tel que $p$ soit localement nilpotent.
\begin{definition}
Un \emph{$O_K$-module $p$-divisible} sur $S$ est un groupe $p$-divisible $X$ sur $S$ muni d'un morphisme d'anneaux
$$
\iota \colon O_K \rightarrow \End(X).
$$
\end{definition}
Dans la suite de ce paragraphe on fixe un $O_K$-module $p$-divisible $X$ sur $S$. On a une décomposition
$$
D\otimes_{\BZ_p}\W(\kappa) = \prod_{\chi \in \kappa}D_{\chi}.
$$
On note comme précédemment $p_{\chi}\colon D\otimes_{\BZ_p}\W(\kappa) \rightarrow D_{\chi}$ la projection pour tout $\chi \in \kappa^+$. On obtient des décompositions
\begin{equation}
\BD(X) = \bigoplus_{\chi \in \kappa^+}\BD(X)_{\chi},\ \Lie(X) = \bigoplus_{\chi \in \kappa^+}\Lie(X)_{\chi}, \ \omega_{X^D} = \bigoplus_{\chi \in \kappa^+}(\omega_{X^D})_{\chi},
\end{equation}
qui sont compatibles avec la suite exacte \ref{eq:filhod} car elle est fonctorielle en $X$.
\begin{definition}
Soit $\sF$ un $\sO_S$-module (resp. $\sO_{S/\Sigma}$-module) localement libre gradué sur $\kappa^+$, i.e.
$$
\sF = \bigoplus_{\chi \in \kappa^+}\sF_{\chi}.
$$
Alors on définit son caractère par 
$$
\cha_S(\sF) = \sum_{\chi \in \kappa^+}\rang_S(\sF_{\chi})[\chi]\in \BN[\kappa^+].
$$
\end{definition}
Cette définition ne porte pas à confusion avec la définition précédente de $\cha_S$ puisqu'il dépend de la nature de $\sF$. De plus, ces définitions sont compatibles, par exemple si $\sF$ est un cristal en $\sO_{S/\Sigma}$-modules
$$
\cha_S(\sF) = \cha_S(\sF_S).
$$
 Il est claire que ces caractères sont additifs. Ainsi, la filtration de Hodge $\ref{eq:filhod}$ nous donne la relation 
 $$
 \cha_S(\BD(X)) = \cha_S(\Lie(X)) + \cha_S(\omega_{X^D}).
 $$

On calcule maintenant le caractère d'un $O_K$-module $p$-divisible. Ce n'est pas un résultat profond mais plus un exercice que l'on peut résoudre de plusieurs façons ; on propose ici un calcul similaire à celui que l'on ferra pour les $O_D$-module $p$-divisibles. On note que $X[\pi]$ et $X[p]$ sont des schémas en $\kappa$-vectoriels.
\begin{proposition}\label{prop:divcar}
Soit $X$ un $O_K$-module $p$-divisible de hauteur $hn$ où $h$ est un entier positif. Alors
$$
\cha_S(\BD(X[\pi])) = h\left ( \sum_{\chi \in \kappa^+}[\chi]\right ), \ \cha_S(\BD(X)) = he\left ( \sum_{\chi \in \kappa^+}[\chi]\right ). 
$$
\end{proposition}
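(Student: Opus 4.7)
Le plan est de se ramener, par la compatibilité du caractère au changement de base établie au paragraphe \ref{sssec:redp} (qui s'étend sans peine aux modules $p$-divisibles puisque $\BD(X)$ est un cristal localement libre), au cas où $S = \Spec k$ pour $k$ un corps parfait contenant $\kappa$. Dans ce cadre, $\BD(X)$ s'identifie au module de Dieudonné covariant $D(X)$, libre de rang $hn = hef$ sur $\W(k)$.

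L'action de $O_K$ munit $D(X)$ d'une structure de module sur $\W(k) \otimes_{\BZ_p} O_K$, et l'on dispose de la décomposition
$$
\W(k) \otimes_{\BZ_p} O_K \cong \prod_{\chi \in \kappa^+} \W(k) \otimes_{\W(\kappa),\chi} O_K,
$$
chaque facteur étant un anneau de valuation discrète totalement ramifié de degré $e$ sur $\W(k)$, d'uniformisante $\pi$ et de corps résiduel $k$. Cette décomposition raffine la graduation $D(X) = \bigoplus_{\chi \in \kappa^+} D(X)_\chi$, chaque composante étant un module sur le DVR correspondant.

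L'étape cruciale est de prouver que $D(X)_\chi$ est libre de rang $h$ sur ce DVR. Étant facteur direct du $\W(k)$-module libre $D(X)$, il est sans torsion ; comme il est de type fini sur un DVR, il est libre d'un certain rang $r_\chi$, et son rang sur $\W(k)$ vaut $er_\chi$. Le Frobenius $F$ étant $\sigma$-linéaire et commutant à l'action de $O_K$, il se restreint en $F \colon D(X)_\chi \to D(X)_{\chi^p}$, et symétriquement $V \colon D(X)_{\chi^p} \to D(X)_\chi$. Comme $D(X)$ est sans $p$-torsion et que $FV = VF = p$, ces applications induisent des isomorphismes après inversion de $p$ entre $D(X)_\chi$ et $D(X)_{\chi^p}$, d'où $r_\chi = r_{\chi^p}$. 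L'action $\chi \mapsto \chi^p$ étant transitive sur $\kappa^+$ (remarque \ref{rem:primfond}), tous les $r_\chi$ coïncident, et leur somme vérifie $\sum_{\chi} er_\chi = hef$, ce qui force $r_\chi = h$.

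Il en résulte que $D(X)_\chi$ est libre de rang $he$ sur $\W(k)$, d'où $\cha_k(\BD(X)) = he\sum_{\chi \in \kappa^+}[\chi]$. Pour les $\pi$-points de torsion, la formule (\ref{eq:tors}), appliquée à l'isogénie $\pi \colon X \to X$, fournit un isomorphisme $\BD(X[\pi]) \cong D(X)/\pi D(X)$ ; la multiplication par $\pi$ préservant la graduation, $\BD(X[\pi])_\chi \cong D(X)_\chi / \pi D(X)_\chi$ est libre de rang $h$ sur le corps résiduel $k$ du DVR, donnant $\cha_k(\BD(X[\pi])) = h\sum_{\chi \in \kappa^+}[\chi]$. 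La principale subtilité réside dans l'identification correcte du décalage de la graduation par le Frobenius, nécessaire à l'équirépartition des rangs $r_\chi$ ; une fois celle-ci établie, le reste du calcul est de l'algèbre commutative sur des DVR.
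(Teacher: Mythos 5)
Votre d\'emonstration est correcte. Pour la seconde \'egalit\'e, votre argument est essentiellement celui du texte : $F$ et $V$ d\'ecalent la graduation et deviennent des isomorphismes apr\`es inversion de $p$ (le texte se contente de l'injectivit\'e de $V$), d'o\`u l'\'egalit\'e des rangs des composantes isotypiques le long de l'orbite de $\chi\mapsto\chi^p$, puis un simple comptage. Pour la premi\`ere \'egalit\'e, en revanche, votre route diff\`ere de celle du texte : celui-ci compare directement les conoyaux de $\pi$ sur $M_{\chi}$ et $M_{\chi^p}$ au moyen du lemme du serpent appliqu\'e au carr\'e commutatif form\'e par $V$ (horizontalement) et $\pi$ (verticalement), en utilisant que $\coker V$ est de longueur finie ; il n'a jamais besoin de la structure de $D(X)_{\chi}$ comme module sur $\W(k)\otimes_{\W(\kappa),\chi}O_K$. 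Vous \'etablissez au contraire que chaque $D(X)_{\chi}$ est libre de rang $h$ sur cet anneau de valuation discr\`ete totalement ramifi\'e, ce qui livre les deux \'egalit\'es d'un seul coup, puisque $\rang_{\W(k)}D(X)_{\chi}=eh$ et $\dim_k D(X)_{\chi}/\pi D(X)_{\chi}=h$. Votre approche est plus structurelle et dispense du lemme du serpent ; celle du texte est plus \'economique, n'exigeant ni l'identification des facteurs de $\W(k)\otimes_{\BZ_p}O_K$ \`a des anneaux de valuation discr\`ete, ni le th\'eor\`eme de structure des modules de type fini sans torsion sur un tel anneau. Les deux d\'emonstrations partagent la r\'eduction au cas d'un corps parfait et l'identification $\BD(X[\pi])\cong D(X)/\pi D(X)$ issue de (\ref{eq:tors}).
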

\begin{proof}
Comme précédemment, (cf. \ref{sssec:bla}) on peut supposer que $S = \Spec (k)$ où $k$ est un corps algébriquement clos de caractéristique $p$. 

On note $M= D(X)$, le module de Dieudonné de $X$, il est gradué sur $\kappa^+$, i.e.
$$
M = \bigoplus_{\chi\in \kappa^+}M_{\chi}.
$$
Les deux opérateurs $F$ et $V$ induisent $F \colon M_{\chi}\rightarrow M_{\chi^p}$ et $V \colon M_{\chi^p}\rightarrow M_{\chi}$ pour tout $\chi \in \kappa^+$. Comme l'opérateur $V$ est injectif, on en déduit que $\rang_{\W(k)}M_{\chi}$ est indépendant de $\chi \in \kappa^+$. Ainsi il existe un entier $a$ tel que
$$
\cha_k(M) = a\left ( \sum_{\chi \in \kappa^+}[\chi]\right ).
$$
Or, comme $X$ est de hauteur $hn=hef$, on en déduit que $a=he$, ce qui montre la deuxième égalité.

Montrons la première égalité. D'après (\ref{eq:tors}), on a $D(X[\pi]) = M/\pi M$, c'est un module de Dieudonné gradué sur $\kappa^+$. Pour tout $\chi \in \kappa^+$, on a un diagramme commutatif, dont tous les membres ont même rang sur $\W(k)$, 
\begin{equation}
\begin{tikzcd}
  M_{ \chi^p} \arrow[r,rightarrow,"V"] \arrow[d,rightarrow,"\pi"]&M_{ \chi} \arrow[d,rightarrow, "\pi"] \\
  M_{\chi^p}\arrow[r,rightarrow,"V"]& M_{\chi} .
\end{tikzcd}
\end{equation}
Comme les co-noyaux des flèches horizontales sont égaux, les co-noyaux des flèches verticales ont mêmes dimensions sur $k$ (on peut le déduire du lemme du serpent). Ainsi, comme précédemment,
$$
\cha_k(M/ \pi M) = h\left ( \sum_{\chi \in \kappa^+}[\chi]\right ),
$$
ce qui montre la première égalité.
\end{proof}
\subsubsection{}
On garde les notations du paragraphe précédent. Soit $d\geqslant 2$ un entier et $D$ l'algèbre à division d'invariant $1/d$ sur $K$. Alors $D$ contient $\widetilde K$, l'extension non-ramifié de $K$ de degré $d$ ; on note $O_{\widetilde K}$ son anneau des entiers. On note $\BF$ le corps résiduel de $D$, qui est le corps résiduel de $\widetilde K$ ; on notera $q$ le cardinal de $\BF$. Soit $O_D$ l'ordre maximal de $D$. On fixe un élément primitif $\Pi \in O_D$ tel que $\Pi^d = \pi$. On fixe $S$ un schéma sur $\W(\BF)$, que l'on supposera connexe, tel que $p$ soit localement nilpotent.
\begin{definition}
Un \emph{$O_D$-module $p$-divisible} sur $S$ est un groupe $p$-divisible $X$ sur $S$, muni d'un morphisme d'anneaux
$$
\iota \colon O_D \rightarrow \End(X).
$$
\end{definition}
En particulier, un $O_D$-module $p$-divisible est un $O_{\widetilde K}$-module $p$-divisible, ainsi les considérations du paragraphe précédent sont toujours valables. Ainsi, la proposition \ref{prop:divcar} nous donne le corollaire suivant :
\begin{corollary}
Soit $X$ un $O_D$-module $p$-divisible de hauteur $hnd^2$ sur $S$, pour $h$ un entier positif. Alors
$$
\cha_S(\BD(X[\pi]) = hd \left ( \sum_{\chi \in \BF^+}[\chi]\right ).
$$
\end{corollary}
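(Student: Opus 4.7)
L'id�e est que ce corollaire est une application directe de la proposition \ref{prop:divcar}, appliqu�e � $X$ vu non pas comme $O_D$-module $p$-divisible mais simplement comme $O_{\widetilde K}$-module $p$-divisible, via l'inclusion $O_{\widetilde K}\subset O_D$.

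Tout d'abord, j'observerais que comme $\widetilde K/K$ est non-ramifi�e, l'uniformisante $\pi$ de $K$ est �galement une uniformisante de $\widetilde K$, si bien que $X[\pi]$ co�ncide avec la torsion par la $\widetilde K$-uniformisante. De plus, le corps r�siduel de $\widetilde K$ est pr�cis�ment $\BF$, et l'on a $[\widetilde K : \BQ_p] = nd$, avec indice de ramification $e$ et degr� r�siduel $fd$. Enfin, la hauteur $hnd^2$ de $X$ comme groupe $p$-divisible s'�crit $(hd)\cdot nd$, donc $X$ est un $O_{\widetilde K}$-module $p$-divisible de ``hauteur relative'' $hd$ au sens de la proposition \ref{prop:divcar}.

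Il suffit alors d'appliquer la proposition \ref{prop:divcar} avec les substitutions suivantes : $K \rightsquigarrow \widetilde K$, $\kappa \rightsquigarrow \BF$, $n \rightsquigarrow nd$, $h \rightsquigarrow hd$. Cela donne directement
$$
\cha_S(\BD(X[\pi])) = hd\left(\sum_{\chi \in \BF^+}[\chi]\right),
$$
ce qui est le r�sultat annonc�.

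Il n'y a pas de difficult� technique ici : l'unique point � v�rifier est que la structure de $O_{\widetilde K}$-module h�rit�e de $O_D$ satisfait les hypoth�ses de la proposition \ref{prop:divcar}, ce qui est imm�diat puisque $O_{\widetilde K}$ est une sous-alg�bre centrale de $O_D$ agissant sur $X$. La preuve est donc essentiellement une traduction de hauteurs.
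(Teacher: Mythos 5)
Votre preuve est correcte et suit exactement la même démarche que l'article : celui-ci déduit le corollaire de la proposition \ref{prop:divcar} en considérant $X$ comme $O_{\widetilde K}$-module $p$-divisible, de hauteur $hnd^2=(hd)(nd)$, avec $\BF$ pour corps résiduel et $\pi$ restant une uniformisante puisque $\widetilde K/K$ est non-ramifiée. Votre vérification des substitutions $K\rightsquigarrow\widetilde K$, $n\rightsquigarrow nd$, $h\rightsquigarrow hd$ est précisément l'argument (laissé implicite) du texte.
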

\qed

Soit $f = \sum_{\chi \in \BF^+}a_{\chi}[\chi]\in \BN[\BF^+]$ un caractère. On notera
$$
f^{(p)} = \sum_{\chi \in \BF^+}a_{\chi}[\chi^p], \text{ et de même } f^{(q)} = \sum_{\chi \in \BF^+}a_{\chi}[\chi^q].
$$
Soit $X$ un $O_D$-module $p$-divisible de hauteur $hnd^2$ sur $S$, pour $h$ un entier positif. On considère le  caractère de $X[\Pi]$, qui est un schéma en $\BF$-vectoriels. L'additivité du caractère et le corollaire précédent impliquent alors que 
$$
\sum_{i=1}^d \cha_S(\BD(X[\Pi]))^{(q^i)} = \cha_S(\BD(X[\pi]).
$$
Ainsi, d'après la proposition \ref{prop:condray}, si $X[\Pi]$ est  $\BF$-schéma de Raynaud alors $h=1$. On veut expliciter une condition sur $X$ pour déterminer si $X[\Pi]$ est un schéma de Raynaud. On a le lemme suivant : 
\begin{lemma}
Soit $X$ un $O_D$-module $p$-divisible sur $S$. Alors on a la relation suivante : 
$$
\cha_S(\BD(X[\Pi]))^{(qp)} - \cha_S(\BD(X[\Pi]))^{(q)} = \cha_S(\Lie(X)) - \cha_S(\Lie(X))^{(q)}.
$$
\end{lemma}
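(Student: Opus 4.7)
Les deux membres de l'identit� sont compatibles au changement de base (pour $\BD$ en tant que cristal, et pour $\Lie$ par fonctorialit�) et � la r�duction modulo $p$ ; on se ram�ne ainsi au cas $S = \Spec(k)$ pour $k$ un corps parfait contenant $\BF$.

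On travaille alors avec $M = D(X)$, le module de Dieudonn� covariant de $X$, qui est un $W(k)$-module libre gradu� sur $\BF^+$ via l'action de $O_{\widetilde K}$, muni des op�rateurs commutatifs $F$, $V$ et $\Pi$ d�calant respectivement la graduation de $p$, $p^{-1}$ et $\sigma := p^f$-Frobenius sur $\BF^+$, avec $FV = VF = p$. D'apr�s (\ref{eq:tors}), $N := D(X[\Pi]) \cong M/\Pi M$, et la filtration de Hodge fournit la suite exacte courte $\Pi$-�quivariante gradu�e $0 \to V\bar M \to \bar M \to \Lie(X) \to 0$ o� $\bar M := M/pM$.

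La strat�gie repose sur deux ingr�dients. D'une part, la suite exacte $0 \to M \xrightarrow{\Pi} M \to N \to 0$ tensoris�e par $k$ donne, via le Tor, la suite exacte � quatre termes $0 \to N \to \bar M \xrightarrow{\Pi} \bar M \to N \to 0$, qui identifie le conoyau de $\Pi$ sur $\bar M$ � $N$ et son noyau � $N$ d�cal� par $\sigma$. D'autre part, la suite exacte longue � six termes associ�e au foncteur noyau--conoyau de $\Pi$, appliqu�e � la filtration de Hodge, relie les contributions sur $V\bar M$, $\bar M$ et $\Lie(X)$. L'outil cl� est la formule d'Euler gradu�e : pour toute application $\Pi$ de d�calage $\sigma$ sur un module gradu� $A$, le caract�re du conoyau moins celui du noyau vaut $\pi(A) - \pi(A)^{(\sigma)}$, o� $\pi(A)_\chi := \dim_k \Pi(A_\chi)$.

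La combinaison de ces deux suites, avec la commutativit� $V\Pi = \Pi V$ et la relation issue de Hodge $\dim V(\bar M_{\chi^p}) = \dim \bar M_\chi - \dim \Lie(X)_\chi$, donne une identit� entre versions d�cal�es de $\cha N$ et $\cha \Lie(X)$. Le point d�licat est le passage des d�calages par $\sigma$ (associ�s � $\Pi$) vers les d�calages par $p$ et $q$ figurant dans l'�nonc� : on l'effectue en utilisant les relations $FV = p$ et $V\Pi = \Pi V$ pour propager les d�calages � travers la filtration, en exploitant la p�riodicit� $\sigma^d$ sur $\BF^+$, puis en simplifiant formellement dans $\BN[\BF^+]$. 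L'identit� finale s'obtient par r�arrangement des termes.
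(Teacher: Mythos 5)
Your overall strategy (reduce to a perfect field, pass to the Dieudonn\'e module, and extract the identity from exact sequences of graded modules by additivity of $\cha$) matches the paper's, but the execution has a genuine gap at its central step. You work modulo $p$ and propose to combine (i) the four-term sequence $0 \to N \to \overline{M} \xrightarrow{\Pi} \overline{M} \to N \to 0$, (ii) the six-term kernel--cokernel sequence of $\Pi$ acting on $0 \to V\overline{M} \to \overline{M} \to \Lie(X) \to 0$, and (iii) the graded Euler formula $\cha(\coker \Pi|_A) = \cha(A) - \cha(A)^{(\tau)} + \cha(\ker\Pi|_A)^{(\tau)}$. But (ii) and (iii) together yield only a tautology: substituting (iii) into the alternating character sum of (ii) and using additivity of $\cha$ on the Hodge sequence, everything cancels and you are left with the statement that $\cha\ker(\Pi|_{V\overline M}) - \cha\ker(\Pi|_{\overline M}) + \cha\ker(\Pi|_{\Lie(X)})$ is invariant under the shift induced by $\Pi$ --- far weaker than the lemma. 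To conclude you would still need to compute $\ker$ and $\coker$ of $\Pi$ on the submodule $V\overline{M}$, and that is precisely the nontrivial input your sketch never supplies; \og la combinaison de ces deux suites \dots donne une identit\'e \fg{} and the final \og r\'earrangement des termes \fg{} are asserted, not proved. The unresolved conversion between the $\sigma$-shift of $\Pi$ and the $(q)$, $(qp)$ twists of the statement is a symptom of the same problem: in the paper's diagram the $(q)$-twist \emph{is} by definition the shift effected by $\Pi$, so no conversion is needed, and since $\sigma^d$ acts trivially on $\BF^+$, \og exploiter la p\'eriodicit\'e $\sigma^d$ \fg{} cannot produce the required twist.

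The paper sidesteps all of this by staying integral: it applies the snake lemma to $\Pi$ acting on $0 \to M^{(p)} \xrightarrow{V} M \to \Lie(X) \to 0$, where the first two terms are free over $\W(k)$ and $\Pi$ is injective on them. The snake lemma then immediately gives $0 \to K \to (M/\Pi M)^{(qp)} \xrightarrow{V} (M/\Pi M)^{(q)} \to \coker(\Pi|_{\Lie(X)}) \to 0$ with $K = \ker(\Pi|_{\Lie(X)})$, and comparing its alternating character sum with that of $0 \to K \to \Lie(X) \xrightarrow{\Pi} \Lie(X)^{(q)} \to \coker(\Pi|_{\Lie(X)}) \to 0$ yields the identity in two lines. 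If you want to salvage your mod-$p$ approach, the missing ingredient is the analogue of your step (i) for the free module $VM \cong M^{(p)}$ rather than for $\overline{M}$; at that point you will have essentially reconstructed the paper's diagram.
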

\begin{proof}
Comme précédemment on peut supposer que $S = \Spec (k)$, le spectre d'un corps algébriquement clos de caractéristique $p$. Soit $M = D(X)$ le module de Dieudonné de $X$. Alors on a le diagramme commutatif suivant, dont les lignes et colonnes sont exactes
\begin{equation*}
\begin{tikzcd}
&0 \arrow[d,rightarrow,""] &0 \arrow[d,rightarrow,""] && \\
  0\arrow[r,rightarrow,""]& M^{(p)} \arrow[r,rightarrow,"V"] \arrow[d,rightarrow,"\Pi"]&M  \arrow[r,rightarrow,""] \arrow[d,rightarrow,"\Pi"]& \Lie(X) \arrow[d,rightarrow,"\Pi"]\arrow[r,rightarrow,""]& 0 \\
  0\arrow[r,rightarrow,""]& M^{(qp)} \arrow[r,rightarrow,"V"] \arrow[d,rightarrow,"\Pi"]&M^{(q)}  \arrow[r,rightarrow,""] \arrow[d,rightarrow,"\Pi"]& \Lie(X)^{(q)} \arrow[d,rightarrow,""]\arrow[r,rightarrow,""]& 0 \\
    & (M/\Pi M)^{(qp)} \arrow[r,rightarrow,"V"] \arrow[d,rightarrow,""]& (M/\Pi M)^{(q)} \arrow[r,rightarrow,""] \arrow[d,rightarrow,""]& \Lie(X[\Pi])^{(q)} \arrow[d,rightarrow,""]\arrow[r,rightarrow,""]& 0 \\
    & 0 & 0 & 0 & \\.
\end{tikzcd}
\end{equation*}
On a déjà remarqué que comme $V$ est injectif sur $M$, $\cha_k(M) = \cha_k(M^{(p)})$. On note $K = \ker (\Lie(X)\xrightarrow{\Pi}\Lie(X)^{(q)})$. Par le lemme du serpent  on a la suite exacte
$$
0 \rightarrow K \rightarrow (M/\Pi M)^{(qp)}\xrightarrow{V}(M/\Pi M)^{(q)} \rightarrow \Lie(X[\Pi])^{(q)} \rightarrow 0.
$$
Ainsi $\cha_k((M/\Pi M)^{(qp)}) - \cha_k((M/\Pi M)^{(q)}) = \cha_k(\Lie(X[\Pi])^{(q)}) - \cha_k(K)$. Mais de la dernière suite exacte verticale du diagramme, on a aussi $\cha_k(\Lie(X[\Pi])^{(q)}) - \cha_k(K) =\cha_k(\Lie(X)) - \cha_k(\Lie(X))^{(q)}$, d'où le lemme.
\end{proof}
On en déduit directement le théorème suivant, qui détermine quand les points de torsion d'un $O_D$-module formel forment un schéma de Raynaud.
\begin{theorem}
Soit $X$ un $O_D$-module $p$-divisible sur $S$ de hauteur $hnd^2$ pour $h$ un entier positif. Alors $X[\Pi]$ est un schéma de Raynaud si et seulement si $h=1$ et $\cha_S(\Lie(X)) = \cha_S(\Lie(X))^{(q)}$.
\end{theorem}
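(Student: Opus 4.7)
Le plan est d'utiliser le corollaire pr�c�dent pour d�terminer compl�tement $\cha_S(\BD(X[\Pi]))$, puis d'appliquer la proposition \ref{prop:condray}. L'observation centrale est que la seconde condition du th�or�me est en fait automatique dans ce cadre : pour $\chi \in \BF^+$ et $\lambda \in \BF^\times$, on a $\chi(\lambda) \in \mu_{q-1}(D)$, d'o� $\chi(\lambda)^q = \chi(\lambda)$, i.e. $\chi^q = \chi$ comme �l�ment de $\BF^+$. L'op�ration $f \mapsto f^{(q)}$ est donc l'identit� sur $\BN[\BF^+]$, et l'�galit� $\cha_S(\Lie(X)) = \cha_S(\Lie(X))^{(q)}$ est tautologique. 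Incidemment, le lemme pr�c�dent redonne alors la relation $\cha_S(\BD(X[\Pi]))^{(p)} = \cha_S(\BD(X[\Pi]))$, information coh�rente avec la suite, mais dont je n'aurai pas besoin.

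Pour le calcul principal, j'invoque le corollaire pr�c�dent qui donne
$$
\sum_{i=1}^d \cha_S(\BD(X[\Pi]))^{(q^i)} = hd \sum_{\chi \in \BF^+}[\chi].
$$
Comme l'op�ration $(q^i)$ est triviale sur $\BN[\BF^+]$ (pour la m�me raison que ci-dessus), chaque terme de la somme co�ncide avec $\cha_S(\BD(X[\Pi]))$. On obtient donc $d \cdot \cha_S(\BD(X[\Pi])) = hd \sum_{\chi \in \BF^+}[\chi]$, d'o�
$$
\cha_S(\BD(X[\Pi])) = h \sum_{\chi \in \BF^+}[\chi].
$$

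La conclusion est alors imm�diate par la proposition \ref{prop:condray} : $X[\Pi]$ est un $\BF$-sch�ma de Raynaud si et seulement si son cristal de Dieudonn� est sp�cial, c'est-�-dire $\cha_S(\BD(X[\Pi])) = \sum_{\chi \in \BF^+}[\chi]$, ce qui �quivaut pr�cis�ment � $h = 1$. L'obstacle, au fond plut�t cosm�tique, consiste � reconnaitre que la seconde condition est automatique et ne restreint rien ici : la formulation du th�or�me semble la conserver pour mettre en valeur le lien avec le lemme pr�c�dent et sugg�rer la forme naturelle de l'�nonc� dans un cadre o� les caract�res consid�r�s ne seraient plus d'ordre divisant $q-1$.
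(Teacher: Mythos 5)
Votre d\'emonstration repose enti\`erement sur l'affirmation que $f\mapsto f^{(q)}$ est l'identit\'e sur $\BN[\BF^+]$, et c'est l\`a que se situe la faille. Le twist not\'e $(q)$ dans cette section n'est pas l'\'el\'evation \`a la puissance $\Card(\BF)$ (qui serait en effet triviale), mais l'action du Frobenius de $\BF$ sur $\kappa$, c'est-\`a-dire $\chi\mapsto\chi^{\Card(\kappa)}=\chi^{p^f}$ : c'est l'automorphisme impos\'e par la relation $\Pi a=\sigma(a)\Pi$ pour $a\in O_{\widetilde K}$, o\`u $\sigma$ engendre $\Gal(\widetilde K/K)$. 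La notation du texte pr\^ete certes \`a confusion, mais la lecture litt\'erale que vous adoptez rendrait la fl\`eche $\Pi\colon M\to M^{(q)}$ du lemme pr\'ec\'edent non gradu\'ee et viderait ce lemme de tout contenu ; ce twist est en r\'ealit\'e un automorphisme d'ordre $d$ de $\BF^+$, non trivial puisque $d\geqslant 2$, et la condition $\cha_S(\Lie(X))=\cha_S(\Lie(X))^{(q)}$ est une vraie restriction.

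La cons\'equence est que votre calcul central est faux : la relation $\sum_{i=1}^d\cha_S(\BD(X[\Pi]))^{(q^i)}=hd\sum_{\chi}[\chi]$ ne d\'etermine que la somme des conjugu\'es de $\cha_S(\BD(X[\Pi]))$ sous $\Gal(\BF/\kappa)$, et non le caract\`ere lui-m\^eme. Par exemple, pour $K=\BQ_p$ et $d=2$, le module de Dieudonn\'e $M=M_{\chi}\oplus M_{\chi^p}$ (deux caract\`eres primitifs conjugu\'es, chaque composante libre de rang $2$) avec $V(f_i)=pe_i$, $V(e_i)=f_i$, $\Pi(e_i)=f_i$, $\Pi(f_i)=pe_i$ d\'efinit un $O_D$-module $p$-divisible de hauteur $4$ (donc $h=1$) pour lequel $M/\Pi M=M_{\chi}/pM_{\chi}$, d'o\`u $\cha_k(\BD(X[\Pi]))=2[\chi]\neq[\chi]+[\chi^p]$ : $X[\Pi]$ n'est pas de Raynaud bien que $h=1$, et de fait $\cha_k(\Lie(X))=2[\chi]$ n'est pas invariant par le twist. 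L'ingr\'edient que vous \'ecartez, le lemme pr\'ec\'edent, est pr\'ecis\'ement ce qui permet, sous l'hypoth\`ese $\cha_S(\Lie(X))=\cha_S(\Lie(X))^{(q)}$, d'obtenir $\cha_S(\BD(X[\Pi]))^{(p)}=\cha_S(\BD(X[\Pi]))$ ; comme $\chi\mapsto\chi^p$ agit transitivement sur $\BF^+$, le caract\`ere est alors un multiple de $\sum_\chi[\chi]$, et c'est seulement \`a ce stade que la relation de somme identifie ce multiple \`a $h$ et que la proposition \ref{prop:condray} permet de conclure ; la r\'eciproque utilise le m\^eme lemme dans l'autre sens.
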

Cette condition est par exemple vérifiée pour les $O_D$-modules formels spéciaux de Drinfeld \cite{drin}, ou plus généralement pour les $O_D$-modules formels $r$-spéciaux considérés par Rapoport-Zink dans \cite{raza2} comme expliqué dans l'introduction. 

\newpage

\bibliographystyle{alpha}
\bibliography{ref}

\newcommand{\etalchar}[1]{$^{#1}$}
\begin{thebibliography}{ABD{\etalchar{+}}66}

\bibitem[ABD{\etalchar{+}}66]{sga3}
Michael Artin, Jean-Etienne Bertin, Michel Demazure, Alexander Grothendieck,
  Pierre Gabriel, Michel Raynaud, and Jean-Pierre Serre.
\newblock {\em Sch\'emas en groupes}.
\newblock S\'eminaire de G\'eom\'etrie Alg\'ebrique de l'Institut des Hautes
  \'Etudes Scientifiques. Institut des Hautes \'Etudes Scientifiques, Paris,
  1963/1966.

\bibitem[BBM82]{bbm}
Pierre Berthelot, Lawrence Breen, and William Messing.
\newblock {\em Th\'{e}orie de {D}ieudonn\'{e} cristalline. {II}}, volume 930 of
  {\em Lecture Notes in Mathematics}.
\newblock Springer-Verlag, Berlin, 1982.

\bibitem[DG70]{dega}
Michel Demazure and Pierre Gabriel.
\newblock {\em Groupes alg\'ebriques. {T}ome {I}: {G}\'eom\'etrie alg\'ebrique,
  g\'en\'eralit\'es, groupes commutatifs}.
\newblock Masson \& Cie, \'Editeur, Paris; North-Holland Publishing Co.,
  Amsterdam, 1970.
\newblock Avec un appendice {{$\i$}t Corps de classes local} par Michiel
  Hazewinkel.

\bibitem[dJM99]{deme}
A.~J. de~Jong and W.~Messing.
\newblock Crystalline {D}ieudonn\'{e} theory over excellent schemes.
\newblock {\em Bull. Soc. Math. France}, 127(2):333--348, 1999.

\bibitem[Dri76]{drin}
V.~G. Drinfeld.
\newblock Coverings of {$p$}-adic symmetric domains.
\newblock {\em Funkcional. Anal. i Prilo\v{z}en.}, 10(2):29--40, 1976.

\bibitem[Fon77]{font}
Jean-Marc Fontaine.
\newblock {\em Groupes {$p$}-divisibles sur les corps locaux}.
\newblock Soci\'{e}t\'{e} Math\'{e}matique de France, Paris, 1977.
\newblock Ast\'{e}risque, No. 47-48.

\bibitem[Ray74]{ray}
Michel Raynaud.
\newblock Sch\'emas en groupes de type {$(p,\dots, p)$}.
\newblock {\em Bull. Soc. Math. France}, 102:241--280, 1974.

\bibitem[RZ17]{raza2}
Michael Rapoport and Thomas Zink.
\newblock On the {D}rinfeld moduli problem of {$p$}-divisible groups.
\newblock {\em Camb. J. Math.}, 5(2):229--279, 2017.

\bibitem[TO70]{orta}
John Tate and Frans Oort.
\newblock Group schemes of prime order.
\newblock {\em Ann. Sci. \'Ecole Norm. Sup. (4)}, 3:1--21, 1970.

\bibitem[Van19]{van2}
Arnaud Vanhaecke.
\newblock {\em Sur le premier revêtement de l'espace symétrique de Drinfeld}.
\newblock En préparation, 2019.

\bibitem[Wan14]{wan}
Haoran Wang.
\newblock L'espace sym\'etrique de {D}rinfeld et correspondance de {L}anglands
  locale {I}.
\newblock {\em Math. Z.}, 278(3-4):829--857, 2014.

\end{thebibliography}
\end{document}